\newcounter{minutes}\setcounter{minutes}{\time}
\newcounter{hours}\setcounter{hours}{\time}
\title
[Lipschitz conditions, triangular ratio metric]
{Lipschitz conditions, triangular ratio metric, and quasiconformal maps }
\author{Jiaolong Chen}
\address{Department of Mathematics\\
Hunan Normal University\\ Changsha, China}
\email{  jiaolongchen@sina.com}
\author{Parisa Hariri}
\address{Department of Mathematics and Statistics\\
  University of Turku\\ Turku, Finland}
\email{parhar@utu.fi}
\author{Riku Kl\'en }
\address{Department of Mathematics and Statistics\\
  University of Turku\\ Turku, Finland\\
Massey University, Auckland, New Zealand}
\email{ripekl@utu.fi}
\author{Matti Vuorinen}
\address{Department of Mathematics and Statistics\\
  University of Turku\\ Turku, Finland}
\email{vuorinen@utu.fi}
\keywords{quasiconformal maps, quasiregular maps, conformal invariance, distortion theorem }\subjclass[2010]{51M10, 30C65}
\dedicatory{}
\theoremstyle{plain}
\newtheorem{thm}[equation]{Theorem}
\newtheorem{cor}[equation]{Corollary}
\newtheorem{lem}[equation]{Lemma}
\theoremstyle{definition}
\theoremstyle{remark}
\newtheorem{rem}[equation]{Remark}
\newtheorem{conj}[equation]{Conjecture}
\numberwithin{equation}{section}
\newtheorem{algor}[equation]{Algorithm}
\newcommand{\beq}{\begin{equation}}
\newcommand{\eeq}{\end{equation}}
\newcommand{\bequu}{\begin{eqnarray*}}
\newcommand{\eequu}{\end{eqnarray*}}
\newcommand{\bequ}{\begin{eqnarray}}
\newcommand{\eequ}{\end{eqnarray}}
\newcommand{\R}{\mathbb{R}^2}
\newcommand{\Bn}{ {\mathbb{B}^n} }
\newcommand{\Rn}{ {\mathbb{R}^n} }
\renewcommand{\tanh}{\,\textnormal{th}}
\begin{document}

\def\thefootnote{}
\footnotetext{ \texttt{File:~\jobname .tex,
           printed: \number\year-\number\month-\number\day,
           \thehours.\ifnum\theminutes<10{0}\fi\theminutes}
} \makeatletter\def\thefootnote{\@arabic\c@footnote}\makeatother

\begin{abstract} The triangular ratio metric is studied in subdomains of the
complex plane and Euclidean $n$-space. Various inequalities are proven for this metric.
The main results deal with the behavior of this metric under quasiconformal
maps. We also study the smoothness of metric disks with
small radii.
\end{abstract}

\maketitle
\section{Introduction}
\setcounter{equation}{0}

A significant part of geometric function theory deals with the behavior of distances under well known classes of mappings such as M\"obius transformations, bilipschitz maps or quasiconformal
mappings. Thus measurement of distances in terms of metrics is a common tool in function theory and frequently hyperbolic metrics or metrics of hyperbolic type are used in addition to Euclidean
or chordal distance. Many authors have contributed to this development in recent years. See for instance \cite{h}, \cite{himps}, \cite{kl}, \cite{pt}. A survey of these developments is given in
\cite{vu1}.

The triangular
ratio metric  is defined as follows for a domain $G \subsetneq   \mathbb{R}^n$ and
$x,y \in G$:
\begin{equation}\label{sm}
s_G(x,y)=\sup_{z\in \partial G}\frac{|x-y|}{|x-z|+|z-y|}\in [0,1].
\end{equation}
Clearly, the supremum in the definition \eqref{sm} of $s_G$ is attained at some
point $z \in \partial G\,,$ but finding this point is
 a nontrivial problem even for the case when $G$ is the unit disk.
P. H\"ast\"o \cite[Theorem 6.1]{h} proved that $s_G$ satisfies the triangle inequality and developed theory for metrics
more general than $s_G\,$ and generalized the work of A. Barrlund \cite{ba}. Very recently, the geometry of the balls of $s_G$
for some special domains was studied in \cite{hklv}.
Our goal here is to continue the study of this metric and to explore its behavior under M\"obius transformations, quasiconformal and quasiregular mappings. We also give upper and lower bounds for this metric in terms of other metrics in several domains such as the unit ball, the upper half plane and $\mathbb{R}^n\setminus\{0\}$, the whole space $\mathbb{R}^n$ punctured at the origin. Also some ideas for further work are pointed out.

The paper is divided into sections as follows. In Section 2 we
give  algorithms for numerically finding the value of $s_G(x,y)$, for instance,
in the case of a domain bounded by a polygon. In Section 3 we develop
the main ideas of this paper and relate the triangular ratio metric to other well-known metrics of geometric function theory such as the hyperbolic metric of the unit ball or half-space or to the distance ratio metric of a domain $G\subset\mathbb{R}^n$. In Section 5 we apply these results and well-known distortion results of quasiconformal maps to study how the triangular ratio metric behaves under quasiconformal and quasiregular mappings. In Section 4 we study the smoothness of the boundaries of $s-$disks in a triangle and in a rectangle.
We now proceed to formulate some of our main results.

\begin{thm}\label{1m}
\begin{enumerate}
\item
Let $f:\mathbb{H}^n\rightarrow \mathbb{H}^n$ be a $K-$quasiregular mapping. Then for $x,y\in \mathbb{H}^n$ we have
\[
s_{\mathbb{H}^n}(f(x),f(y))\leq\lambda_n^{1-\alpha}(s_{\mathbb{H}^n}(x,y))^{\alpha},~ \alpha=K^{1/(1-n)}\,,
\]
where $\lambda_n \in [4,2 e^{n-1}), \lambda_2=4,$ is the Gr\"otzsch ring constant depending only on $n$ (\cite[Lemma 7.22]{vu}).

\medskip
\item
Let $f:\mathbb{B}^n\rightarrow \mathbb{B}^n$ be a $K-$quasiregular mapping.
Then for $x,y\in \mathbb{B}^n$ we have
\[
s_{\mathbb{B}^n}(f(x),f(y))\leq 2^{\alpha}\lambda_n^{1-\alpha}(s_{\mathbb{B}^n}(x,y))^{\alpha},~ \alpha=K^{1/(1-n)}.
\]
\medskip
\item
Let $f:\mathbb{B}^n\rightarrow \mathbb{H}^n$ be a $K-$quasiregular mapping.
Then for $x,y\in \mathbb{B}^n$ we have
\[
s_{\mathbb{H}^n}(f(x),f(y))\leq 2^{\alpha}\lambda_n^{1-\alpha}(s_{\mathbb{B}^n}(x,y))^{\alpha},~ \alpha=K^{1/(1-n)}.
\]
\medskip
\item
Let $f:\mathbb{H}^n\rightarrow \mathbb{B}^n$ be a $K-$quasiregular mapping.
Then for $x,y\in \mathbb{H}^n$ we have
\[
s_{\mathbb{B}^n}(f(x),f(y))\leq \lambda_n^{1-\alpha}(s_{\mathbb{H}^n}(x,y))^{\alpha},~ \alpha=K^{1/(1-n)}.
\]

\end{enumerate}
\end{thm}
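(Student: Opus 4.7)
The plan is to reduce all four items to the standard quasiregular Schwarz lemma for the hyperbolic metric, after translating between the triangular ratio metric $s_G$ and the quantity $\tanh(\rho_G/2)$ in the ball and in the half-space. Two comparison results, presumably recorded in Section~3, are needed. The first is the identity
\[
s_{\Hn}(x,y)=\tanh\bigl(\rho_{\Hn}(x,y)/2\bigr),
\]
which follows by reflecting $y$ across $\partial\Hn$ to $\bar y$ and noting that $|x-z|+|z-y|=|x-z|+|z-\bar y|\ge|x-\bar y|$ for every $z\in\partial\Hn$, with equality when $z$ lies on the segment $[x,\bar y]$; hence $s_{\Hn}(x,y)=|x-y|/|x-\bar y|$, a classical expression for $\tanh(\rho_{\Hn}/2)$. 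The second is the two-sided estimate
\[
s_{\Bn}(x,y)\le \tanh\bigl(\rho_{\Bn}(x,y)/2\bigr)\le 2\, s_{\Bn}(x,y),
\]
obtained by comparing $\Bn$ to its half-space model via a M\"obius map and keeping careful track of how the curvature of $\partial\Bn$ affects the supremum defining $s$.

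The second ingredient is the classical quasiregular Schwarz lemma for the hyperbolic-isometric models $\Bn$ and $\Hn$: for any $K$-quasiregular $f:G\to G'$ with $G,G'\in\{\Bn,\Hn\}$,
\[
\tanh\bigl(\rho_{G'}(f(x),f(y))/2\bigr)\le \varphi_{K,n}\bigl(\tanh(\rho_G(x,y)/2)\bigr),
\]
combined with the elementary majorant $\varphi_{K,n}(r)\le \lambda_n^{1-\alpha} r^{\alpha}$ for $\alpha=K^{1/(1-n)}$, both available from \cite{vu}.

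With these in hand, each item reduces to substitution and monotonicity. For (1) the identity in $\Hn$ applies on both sides and produces $\lambda_n^{1-\alpha}(s_{\Hn}(x,y))^\alpha$ with no extra constant. For (2) the image-side inequality $s_{\Bn}(f(x),f(y))\le \tanh(\rho_{\Bn}(f(x),f(y))/2)$ is combined with the Schwarz lemma in $\Bn$ and then with the source-side estimate $\tanh(\rho_{\Bn}(x,y)/2)\le 2\, s_{\Bn}(x,y)$; the monotonicity of $\varphi_{K,n}$ and of $r\mapsto r^\alpha$ produces the factor $2^\alpha$. Parts (3) and (4) are hybrid versions: the prefactor $2^\alpha$ appears precisely when the source is $\Bn$, through the factor $2$ in $\tanh(\rho_{\Bn}/2)\le 2 s_{\Bn}$, and is absent when the source is $\Hn$, where the identity for $s_{\Hn}$ is sharp. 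The only nontrivial analytic input is the sharp factor $2$ in the comparison $\tanh(\rho_{\Bn}/2)\le 2 s_{\Bn}$; the rest is direct substitution in the quasiregular Schwarz lemma and its explicit asymptotic bound.
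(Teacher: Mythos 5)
Your proposal is correct and follows essentially the same route as the paper: the identity $s_{\Hn}=\tanh(\rho_{\Hn}/2)$, the two-sided comparison $s_{\Bn}\le\tanh(\rho_{\Bn}/2)\le 2s_{\Bn}$ (the paper's Lemma \ref{sp}(1) with Lemma \ref{3.7}, and Theorem \ref{rk2}), and the quasiregular Schwarz lemma with $\varphi_{K,n}(r)\le\lambda_n^{1-\alpha}r^\alpha$, assembled by substitution so that the factor $2^\alpha$ appears exactly when the source domain is $\Bn$. The only discrepancy is your sketched justification of the $\Bn$-comparison (via a M\"obius map to the half-space), whereas the paper proves the left inequality through $s_{\Bn}\le p_{\Bn}\le\tanh(\rho_{\Bn}/2)$ and the right one by a direct geometric argument on the sphere; since you correctly invoke these as Section~3 ingredients, the overall argument stands.
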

\medskip

\begin{thm}\label{2m}
Let $G=\mathbb{R}^n\setminus\{0\},$ and $f:G \to  G$ be a $K-$quasiconformal mapping with $f(\infty)=\infty,$
and let $z,w$ be two distinct points in $G$ and $ \alpha= K^{1/(1-n)}\,.$ Then
\[
s_{fG}(f(z),f(w))\leq \frac{1}{P_5(n,K)}\left(s_G(z,w)\right)^\alpha,\quad s_G(z,w)=\frac{|z-w|}{|z|+|w|},
\]
 where $P_5(n,K)\to 1, K \to 1$, and $P_5(n,K)$ is defined in Lemma \ref{29}.
\end{thm}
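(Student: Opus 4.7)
The plan is to exploit the simple explicit form of the triangular ratio metric on $G=\mathbb{R}^n\setminus\{0\}$ and then apply a precise quasiconformal distortion estimate that will be packaged in Lemma \ref{29}.

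First I would record two reductions. Since $\partial G=\{0\}$ in $\mathbb{R}^n$ and $|x-z|+|z-y|\to\infty$ as $z\to\infty$, the supremum in the definition of $s_G$ is attained at the origin, giving
$$s_G(z,w)=\frac{|z-w|}{|z|+|w|},$$
as already stated. Moreover, $f:G\to G$ is a homeomorphism which extends to a homeomorphism of $\mathbb{R}^n\cup\{\infty\}$ permuting the two ideal boundary points $\{0,\infty\}$; the hypothesis $f(\infty)=\infty$ therefore forces $f(0)=0$. By removability of isolated singularities of quasiconformal maps, $f$ extends to a $K$-quasiconformal self-map of $\mathbb{R}^n$ fixing both $0$ and $\infty$. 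In particular $fG=G$, so
$$s_{fG}(f(z),f(w))=\frac{|f(z)-f(w)|}{|f(z)|+|f(w)|}.$$

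Second, I would invoke Lemma \ref{29}, which is engineered to yield exactly the bound
$$\frac{|f(z)-f(w)|}{|f(z)|+|f(w)|}\leq \frac{1}{P_5(n,K)}\left(\frac{|z-w|}{|z|+|w|}\right)^{\alpha},\qquad \alpha=K^{1/(1-n)},$$
for any $K$-quasiconformal self-map of $\mathbb{R}^n$ fixing $0$ and $\infty$ and any $z,w\in G$. Combining this with the two reductions from the previous paragraph yields the theorem immediately.

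The main obstacle therefore lies not in the deduction of Theorem \ref{2m} from Lemma \ref{29}, but in the proof of Lemma \ref{29} itself. That lemma will rest on the quasisymmetry of $K$-quasiconformal self-maps of $\mathbb{R}^n$ that fix two points, which in turn is derived from conformal modulus estimates using the Gr\"otzsch and Teichm\"uller extremal rings and the standard distortion functions $\varphi_{K,n}$. The constant $P_5(n,K)$ is extracted by tracking these extremal quantities carefully, and the normalization $P_5(n,K)\to 1$ as $K\to 1$ follows from continuity of $\varphi_{K,n}$ at $K=1$. Once Lemma \ref{29} is in place, Theorem \ref{2m} is essentially a translation between the metric on $G$ and the chordal/radial data that appears naturally in quasiconformal distortion.
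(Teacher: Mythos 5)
Your proposal is correct and follows essentially the same route as the paper: the paper's proof is a one-line application of Lemma \ref{29} with $b=f(b)=0$, using exactly the reduction that $\partial G=\{0\}$ gives the explicit formula for $s_G$ and that $f(\infty)=\infty$ forces $f(0)=0$ after extending $f$ across the puncture. Your additional remarks on removability and on how Lemma \ref{29} itself is proved are consistent with the paper, which simply cites that lemma from the literature.
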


Of particular interest is the special
case $K=1$ of Theorems \ref{1m} and \ref{2m}. Clearly, Theorem \ref{2m} is
sharp in this case and the same is true
about Theorem \ref{1m} (1).
The question about the best constant in Theorem \ref{1m} (2) deserves some attention for the case when $K=1=\alpha$.
The constant on the right hand side is then $2$.

For a detailed study of this constant we define for $a\in (0,1)$ the class $C(a)$ of all M\"obius transformations $h: \mathbb{B}^n\rightarrow \mathbb{B}^n$ with
$|h(0)|=a$ and the constant
\begin{equation}
L(a)=\sup\{s_{\mathbb{B}^n}(h(x),h(y))/s_{\mathbb{B}^n}(x,y):~x,y\in\mathbb{B}^n, x \neq y, h\in C(a)\}.
\end{equation}

\medskip

\begin{thm}\label{mob}
For $n=2\,,$ $L(a)\geq 1+a.$
\end{thm}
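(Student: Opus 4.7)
The plan is to exhibit a single Möbius transformation $h \in C(a)$ together with a sequence of point pairs $(x_n, y_n)$ in $\mathbb{B}^2$ such that the quotient $s_{\mathbb{B}^2}(h(x_n), h(y_n))/s_{\mathbb{B}^2}(x_n, y_n)$ converges to $1+a$. By the rotational invariance of $\mathbb{B}^2$, I may assume that $a \in (0,1)$ is a positive real number and take
\[
  h(z) = \frac{z+a}{1+az},
\]
which satisfies $h(0) = a$ and hence belongs to $C(a)$. The natural test pair is $x = 0$ together with $y = t \in (0,1)$ on the positive real axis, and I would let $t \downarrow 0$.

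First I would compute $s_{\mathbb{B}^2}(0, t)$ by minimizing $|z| + |z - t|$ over $|z| = 1$. The level sets of this sum are ellipses with foci $0$ and $t$, and a direct check at $z = \pm 1, \pm i$ shows the minimum is attained at $z = 1$ and equals $2 - t$, giving $s_{\mathbb{B}^2}(0, t) = t/(2 - t)$. Next I compute $h(t) = (t+a)/(1+at)$ and apply the same idea: both $h(0) = a$ and $h(t)$ lie in $(0, 1)$, and the minimum of $|h(0) - z| + |z - h(t)|$ over $|z| = 1$ is again attained at $z = 1$. A short algebraic simplification yields
\[
  s_{\mathbb{B}^2}(h(0), h(t)) = \frac{t(1+a)}{2 - t(1-a)}.
\]
Taking the quotient and letting $t \to 0^+$,
\[
  \frac{s_{\mathbb{B}^2}(h(0), h(t))}{s_{\mathbb{B}^2}(0, t)} \;=\; \frac{(1+a)(2 - t)}{2 - t(1-a)} \;\longrightarrow\; 1 + a.
\]
Since each such quotient is a lower bound for $L(a)$, this proves $L(a) \geq 1 + a$.

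Conceptually, the mechanism behind the equality in the limit is the infinitesimal relation $s_G(x, y) \sim |x-y|/(2\,d(x, \partial G))$ as $y \to x$ in a smoothly bounded domain, combined with $|h'(0)| = 1 - a^2$ and $d(h(0), \partial \mathbb{B}^2) = 1 - a$; these give
\[
  \frac{|h'(0)|\,d(0, \partial \mathbb{B}^2)}{d(h(0), \partial \mathbb{B}^2)} \;=\; \frac{1 - a^2}{1 - a} \;=\; 1 + a.
\]
There is no serious obstacle in the argument. The only point requiring a little care is verifying that, for the specific pairs $(0, t)$ and $(h(0), h(t))$, the extremal boundary point in the definition of $s_{\mathbb{B}^2}$ is $z = 1$; but this is an elementary comparison on the unit circle, since for small positive foci lying in $(0,1)$ the smallest distance-sum ellipse touches $\partial \mathbb{B}^2$ at $z = 1$.
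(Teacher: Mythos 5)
Your proposal is correct and is essentially the paper's own argument: the authors use the same Möbius map $h(z)=(z+a)/(1+az)$, the same test pair $x=0$, $y$ a small positive real number (parametrized there as $b=1/(1+v(1+a))$ with $v\to\infty$, which is just $y\to 0^+$), and the same formula $s_{\mathbb{B}^2}(r,t)=(t-r)/(2-t-r)$ for collinear points on the positive real axis. Your computations check out, so there is nothing further to add.
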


\medskip

Theorem \ref{mob} shows that for $K=1$ the constant $2$ in Theorem \ref{1m} (2) cannot be replaced by a smaller constant (independent of $a$).
\medskip

\begin{conj}\label{conj1.9}
Our numerical experiments for $n=2$ suggest that $L(a)=1+a$.
\end{conj}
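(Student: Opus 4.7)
The plan is to exhibit an explicit Möbius self-map of $\mathbb{B}^2$ together with a one-parameter family of test pairs along which the ratio in the definition of $L(a)$ tends to $1+a$; since $L(a)$ is a supremum, this suffices. I take $h(z) = (z+a)/(1+az)$, a standard conformal self-map of $\mathbb{B}^2$ with $h(0) = a$ (so $h \in C(a)$), and as test points $x = t$, $y = -t$ with $t \in (0,1)$ tending to $0$. Both $x,y$ and their images $h(t),h(-t)$ then lie on the real diameter, which allows all relevant instances of $s_{\mathbb{B}^2}$ to be evaluated in closed form.

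The main work is to identify the boundary point that attains the supremum. Writing $z = e^{i\theta}$, one checks that for real $p,q \in (-1,1)$ the function $\theta \mapsto |p-e^{i\theta}| + |q-e^{i\theta}|$ has derivative of the form $\sin\theta\cdot\bigl[\,p/|p-e^{i\theta}| + q/|q-e^{i\theta}|\,\bigr]$, so its critical points are confined to $\{0,\pi/2,\pi,3\pi/2\}$; direct comparison shows the minimum is attained at $\theta = 0$ whenever the segment $[p,q]$ lies closer to $1$ than to $-1$, which is the case in both instances below. Hence
\[
s_{\mathbb{B}^2}(t,-t) = \frac{2t}{2} = t, \qquad s_{\mathbb{B}^2}(h(t),h(-t)) = \frac{h(t)-h(-t)}{2 - h(t) - h(-t)}.
\]
A short algebraic simplification using $h(t)-h(-t) = 2t(1-a^2)/(1-a^2 t^2)$ and $h(t)+h(-t) = 2a(1-t^2)/(1-a^2 t^2)$ gives $2 - h(t) - h(-t) = 2(1-a)(1+at^2)/(1-a^2 t^2)$, and therefore
\[
s_{\mathbb{B}^2}(h(t),h(-t)) = \frac{t(1+a)}{1 + a t^2}.
\]
Taking the ratio and letting $t \to 0^+$ yields
\[
\frac{s_{\mathbb{B}^2}(h(t),h(-t))}{s_{\mathbb{B}^2}(t,-t)} = \frac{1+a}{1 + at^2} \ \longrightarrow\ 1+a,
\]
which proves $L(a) \geq 1+a$.

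The only real obstacle is the boundary-point identification in the middle step, but this is an elementary one-variable calculus exercise (the symmetry of the configuration about the real axis forces the minimizer of $|p-z|+|q-z|$ onto $\{-1,1\}$, and a direct comparison picks $z=1$). Apart from this, the argument is just an exact algebraic identity valid for every $t \in (0,1)$, with the bound $1+a$ approached but never attained by any finite $t$; no appeal to the explicit ball formulas of \cite{hklv} or to any asymptotic machinery is required.
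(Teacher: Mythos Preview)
Your argument correctly establishes the lower bound $L(a)\geq 1+a$, but note that the statement labeled Conjecture~\ref{conj1.9} is indeed a conjecture in the paper: the equality $L(a)=1+a$ is not proved there, and your argument does not prove it either (it gives only one inequality). What you have actually proved is Theorem~\ref{mob}, and the paper's proof of that theorem is the right object of comparison.

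Against that proof, your approach is essentially the same: you use the identical M\"obius map $h(z)=(z+a)/(1+az)$ and the closed-form expression $s_{\mathbb{B}^2}(r,t)=(t-r)/(2-t-r)$ for real points in $\mathbb{B}^2$, then take a limit along a one-parameter family on the real axis. The only difference is the choice of test pair: the paper uses $x=0$, $y=b$ with $b=1/(1+v(1+a))\to 0$ as $v\to\infty$, obtaining the ratio $(1+2v(1+a))/(1+2v)\to 1+a$, whereas you use the symmetric pair $x=t$, $y=-t$ with $t\to 0$, obtaining $(1+a)/(1+at^2)\to 1+a$. Both computations are equally short; the paper's choice avoids having to discuss the extremal boundary point for $s_{\mathbb{B}^2}(t,-t)$, since with one point at the origin the formula $s_{\mathbb{B}^2}(0,b)=b/(2-b)$ is immediate.

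One small imprecision: your general claim that the critical points of $\theta\mapsto |p-e^{i\theta}|+|q-e^{i\theta}|$ are confined to $\{0,\pi/2,\pi,3\pi/2\}$ for arbitrary real $p,q\in(-1,1)$ is not correct; the bracket $p/|p-e^{i\theta}|+q/|q-e^{i\theta}|$ can vanish at $\cos\theta=(p+q)/(2pq)$ when $p,q$ have opposite signs. However, this does not affect your two specific applications: for $(p,q)=(t,-t)$ the extra zero is exactly at $\cos\theta=0$, and for $(p,q)=(h(t),h(-t))$ with $t$ small both entries are positive, so the bracket never vanishes. In both cases the minimum is at $z=1$, and your formulas and final limit are correct.
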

\medskip

In Theorem \ref{3.36} we show that $\frac{1-a}{1+a}\leq L(a)\leq\frac{1+a}{1-a}$.

\medskip

For a domain $G\subset \mathbb{R}^n, x,y\in G,$ we define the $j$-metric by
$$j_G(x,y)=\log\left(1+\frac{|x-y|}{\min\{d_G(x),d_G(y)\}}\right),$$
where $d_G(z)=d(z,\partial G)$. We will omit the subscript $G$ if it is clear from context. This metric has found numerous applications
in geometric function theory, see \cite{himps,vu}. We also define
$$p_G(x,y)=\frac{|x-y|}{\sqrt{|x-y|^2+4\,d_G(x)\,d_G(y)}}.$$

We next formulate some of our comparison results between metrics.

\begin{thm} \label{mainA}
  Let $G$ be a proper subdomain of $\mathbb{R}^n$. Then for all $x,y \in G$ we have
  \[
    p_G(x,y)\leq \frac{1}{\sqrt{2}} j_G(x,y),
  \]
and
  \[
    s_G(x,y) \leq \frac{1}{\log 3}j_G(x,y),
  \]
where the constant $\frac{1}{\log 3} \approx 0.91$ is the best possible.
\end{thm}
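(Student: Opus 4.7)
The plan is to reduce both inequalities to one-variable estimates. By symmetry assume $d_G(x)\leq d_G(y)$ and set $s=|x-y|/d_G(x)$, so that $j_G(x,y)=\log(1+s)$. For any $z\in\partial G$ the triangle inequality gives $|x-z|+|z-y|\geq d_G(x)+d_G(y)\geq 2d_G(x)$ and also $|x-z|+|z-y|\geq|x-y|$, while $d_G(x)d_G(y)\geq d_G(x)^2$ in the definition of $p_G$. These observations yield the uniform bounds
\[
  p_G(x,y)\leq \frac{s}{\sqrt{s^2+4}}, \qquad s_G(x,y)\leq \min\!\left\{\tfrac{s}{2},\,1\right\},
\]
and both claims reduce to inequalities in a single variable $s\geq 0$.

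For the $p_G$ bound, I would interpolate through the auxiliary function $\sqrt{2}\,s/(s+2)$: first, $s/\sqrt{s^2+4}\leq \sqrt{2}\,s/(s+2)$ rearranges after squaring to the factorization $(s-2)^2\geq 0$; second, $2s/(s+2)\leq \log(1+s)$ follows from a one-line derivative calculation showing that the derivative of the difference equals $s^2/[(1+s)(s+2)^2]\geq 0$, while the difference vanishes at $s=0$. Chaining the two gives $p_G(x,y)\leq j_G(x,y)/\sqrt{2}$. The subtle point is that the direct difference $\log(1+s)/\sqrt{2}-s/\sqrt{s^2+4}$ is not monotone in $s$, so a single monotonicity argument fails; the intermediate quantity $\sqrt{2}\,s/(s+2)$ is what makes each half elementary, and I expect locating this interpolant to be the main obstacle.

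For the $s_G$ bound, I need $\min\{s/2,\,1\}\leq \log(1+s)/\log 3$. The case $s\geq 2$ is immediate since $\log(1+s)\geq \log 3$. For $0\leq s\leq 2$ the inequality is equivalent to $3^{s/2}\leq 1+s$, i.e.\ $f(s):=1+s-3^{s/2}\geq 0$. Since $3^{s/2}$ is strictly convex, $f$ is concave; combined with $f(0)=f(2)=0$, this forces $f\geq 0$ on $[0,2]$. For sharpness of the constant $1/\log 3$, take $G=\mathbb{R}^n\setminus\{0\}$ and $x=e_1$, $y=-e_1$: the supremum in the definition of $s_G$ is attained at $z=0$, giving $s_G(x,y)=1$, while $d_G(x)=d_G(y)=1$ and $|x-y|=2$ yield $j_G(x,y)=\log 3$, so the ratio equals $1/\log 3$.
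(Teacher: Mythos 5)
Your proposal is correct and follows essentially the same route as the paper: the same reduction to the single variable $s=|x-y|/d_G(x)$, the same bounds $p_G\le s/\sqrt{s^2+4}$ and $s_G\le\min\{s/2,1\}$, the same interpolant $2s/(s+2)$ with the $(s-2)^2\ge 0$ factorization for the $p_G$ inequality, the same case split at $s=2$ for the $s_G$ inequality, and the same extremal example $G=\mathbb{R}^n\setminus\{0\}$, $y=-x$. The only cosmetic difference is that you verify $\log(1+s)\ge\tfrac{s}{2}\log 3$ on $[0,2]$ by concavity of $1+s-3^{s/2}$, whereas the paper uses monotonicity of $t\mapsto 2t\log(1+1/t)$; both are equivalent elementary arguments.
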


\begin{thm}\label{mainB}
  (1) Let $t \in (0,1)$ and $m \in \{ j, p, s \}$. There exists a constant $c_m=c_m(t) > 1$ such that for all $x,y \in \mathbb{B}^n$ with $|x|,|y|<t$ we have
  \[
    m_{\mathbb{B}^n}(x,y) \leq c_m m_{\mathbb{R}^n \setminus \{ e_1 \}}(x,y).
  \]
  Moreover, $c_m(t) \to 1$ as $t \to 0$ and $c_m(t) \to \infty$ as $t \to 1$, for all $m\in\{j,p,s\}$.

  \noindent (2) Let $G \subset \mathbb{R}^n$, $x \in G$, $t \in (0,1)$ and $m \in \{ j, p, s \}$. Then there exists a constant $c_m = c_m(t)$ such that for all $y,z \in G \setminus \mathbb{B}^n(x,t d_G(x))$ we have
  \[
    m_{G \setminus \{ x \}}(y,z) \leq c_m m_G(y,z).
  \]
  Moreover, the constant is best possible as $t \to 1$. This means that $c_j, c_p, c_s \to 2$ as $t \to 1$.
\end{thm}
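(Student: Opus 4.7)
The plan is to treat both parts by a common template. In both settings the monotonicity $m_{G_1}\geq m_{G_2}$ for $G_1\subset G_2$ ($m\in\{j,p,s\}$) gives one direction for free, so only an upper bound $m_{G_1}(x,y)\leq c_m\,m_{G_2}(x,y)$ on the smaller domain needs to be proved. The key tool is a quantitative comparison of the distances to the boundary, combined with two elementary facts: the inequality $\log(1+Ku)\leq K\log(1+u)$ (from Bernoulli, $K\geq 1$, $u\geq 0$) and $(a+K^2b)/(a+b)\leq K^2$ for $a,b\geq 0$.

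For Part (1), since $e_1\in\partial\mathbb{B}^n$ one has $1-|x|=d_{\mathbb{B}^n}(x)\leq |x-e_1|=d_{\mathbb{R}^n\setminus\{e_1\}}(x)\leq 1+|x|$, so for $|x|<t$
\[
\frac{d_{\mathbb{R}^n\setminus\{e_1\}}(x)}{d_{\mathbb{B}^n}(x)}\leq \frac{1+t}{1-t}=:K(t).
\]
The $j$-case is then immediate from Bernoulli. The $p$-case follows by plugging this scaling into $p_G^2=|x-y|^2/(|x-y|^2+4d_G(x)d_G(y))$, yielding $p_{\mathbb{B}^n}/p_{\mathbb{R}^n\setminus\{e_1\}}\leq K(t)$. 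The $s$-case uses $|x-z|+|y-z|\geq 2-|x|-|y|$ for $z\in\partial\mathbb{B}^n$ and $|x-e_1|+|y-e_1|\leq 2+|x|+|y|$, producing a ratio $\leq K(t)$. Since $K(t)\to 1$ as $t\to 0$ and each metric is comparable up to a factor $K(t)$, one gets $c_m(t)\to 1$; conversely, taking $x=\rho e_2$ with $\rho\nearrow 1$ and $y$ close to $x$ makes $d_{\mathbb{B}^n}(x)\to 0$ while $|x-e_1|\to\sqrt 2$ stays bounded away from zero, forcing $c_m(t)\to\infty$ as $t\to 1$.

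For Part (2), the identity $d_{G\setminus\{x\}}(y)=\min(d_G(y),|y-x|)$ combined with the triangle estimate $d_G(y)\leq d_G(x)+|y-x|$ gives, on $|y-x|\geq td_G(x)$,
\[
\frac{d_G(y)}{d_{G\setminus\{x\}}(y)}\leq \frac{1+t}{t}=:K'(t),
\]
which handles $j$ and $p$ exactly as before. For $s$, I use the decomposition $s_{G\setminus\{x\}}(y,z)=\max\!\bigl(s_G(y,z),\,|y-z|/(|y-x|+|z-x|)\bigr)$; if the maximum is the second term, then picking a boundary point $w_0\in\partial G$ with $|x-w_0|=d_G(x)$ yields $s_G(y,z)\geq |y-z|/(|y-x|+|z-x|+2d_G(x))$, whence the ratio is at most $1+2d_G(x)/(|y-x|+|z-x|)\leq 1+1/t=K'(t)$. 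Sharpness of $c_m(t)\to 2$ as $t\to 1$ is witnessed by $G=\mathbb{R}^n\setminus\{-e_1\}$, $x=0$, $y=(t+\varepsilon/2)e_1$, $z=(t+\varepsilon)e_1$ with $\varepsilon\to 0$: all three ratios $j_{G\setminus\{x\}}/j_G$, $p_{G\setminus\{x\}}/p_G$, $s_{G\setminus\{x\}}/s_G$ then converge to $(1+t)/t$. The main technical care is in the $s$-case, whose supremum-over-boundary definition forces the max-decomposition rather than the purely distance-to-boundary argument used for the other two metrics.
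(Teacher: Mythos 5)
Your proposal is correct, and for Part (2) it takes a genuinely different and considerably leaner route than the paper. The paper proves Part (2) by an extensive case analysis (splitting according to which of $d_G(y),d_G(z)$ change upon puncturing, and whether $|x-y|\lessgtr d_G(x)$), invoking a monotonicity lemma for $\log(1+ax)/\log(1+bx)$ together with l'H\^opital-type limits, and arrives at $c_j=2/t$, $c_p=(1+t)/t$, $c_s=1+1/t$. Your single pointwise estimate $d_G(w)/d_{G\setminus\{x\}}(w)\le (1+t)/t$ combined with Bernoulli's inequality $\log(1+Ku)\le K\log(1+u)$ and the elementary bound $(a+K^2b)/(a+b)\le K^2$ collapses all of this and yields the uniform constant $(1+t)/t$ for all three metrics --- strictly better than the paper's $2/t$ for the $j$-metric. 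Your max-decomposition $s_{G\setminus\{x\}}(y,z)=\max\bigl(s_G(y,z),\,|y-z|/(|y-x|+|z-x|)\bigr)$ is essentially the paper's argument for the $s$-case, so there you coincide. Your sharpness witness is also nicer: the paper uses $G=\mathbb{R}^n\setminus\{0\}$ with three separate point configurations and a double limit $t\to1$, $a\to0$, whereas your single configuration in $\mathbb{R}^n\setminus\{-e_1\}$ handles all three metrics at once and shows the lower bound $(1+t)/t$ for \emph{every} $t$, so your constants are in fact optimal for each $t$, not merely asymptotically as $t\to1$. For Part (1) the two arguments are close in spirit (your $s$-bound is identical to the paper's, your $p$-constant $(1+t)/(1-t)$ is slightly sharper than the paper's $\sqrt{(2t^2+2t+1)}/(1-t)$, and for $j$ you replace the paper's monotone-ratio/l'H\^opital step by Bernoulli), and your example $x=\rho e_2$ with $y\to x$ gives a clean justification that the blow-up as $t\to1$ is unavoidable. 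I see no gaps.
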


We also study the geometry of balls of the $s$-metric. We use the notation
$$ B_{s_{G}}(x,r)=\{z\in G: s_G(x,z)<r\} $$
for the balls of the $s$-metric. First we
show, for $n=2\,,$ that disks of small enough radii have smooth boundaries and our
main result here is Theorem \ref{thm1.8}.

Let us denote $T_{\frac{\pi}{6},2}$ the equilateral triangle with vertices $(0,0)$, $(\sqrt{3},1)$, $(\sqrt{3},-1)$, and $R_{a,b}$ the rectangle with vertices $(a,b)$, $(a,-b)$, $(-a,b)$, $(-a,-b)$, where $a\geq b>0$.

\begin{thm}\label{thm1.8}
(1) Let  $G=T_{\frac{\pi}{6},2}$, $x=(x_{1},x_{2})\in G$, $r>0$. Then the metric ball $ B_{s_{G}}(x,r) $ is smooth if and only if $r\leq r_{0}$ or $r\leq r_{1}$, where
\begin{center}
$r_{0}=\min \left \{\frac{2|x_{2}|}{|x|},\frac{|x_2|-\sqrt{3}x_1+2 }{\sqrt{(x_1-\sqrt{3})^{2}+(1-|x_2|)^{2}}} \right\},$ and
$r_{1}=\frac{\sqrt{3}x_1-2-|x_{2}|}{\sqrt{(x_1-\sqrt{3})^{2}+(1-|x_2|)^{2}}} .$
\end{center}

(2) Let  $G=R_{a,b}$, $x=(x_{1},x_{2})\in G$, $r>0$. Then the metric ball $ B_{s_{G}}(x,r) $ is smooth if and only if $r\leq r_{2}$ or $r\leq r_{3}$, where
\begin{center}
$r_{2}=\min \left \{\frac{|x_{2}|}{b},\frac{(a-|x_1|)-(b-|x_2|)}{\sqrt{(a-|x_1|)^{2}+(b-|x_2|)^{2}}} \right\},$ and
$r_{3}=\min \left \{\frac{|x_{1}|}{a},\frac{(b-|x_2|)-(a-|x_1|)}{\sqrt{(a-|x_1|)^{2}+(b-|x_2|)^{2}}} \right\}.$
\end{center}

\end{thm}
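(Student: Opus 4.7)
The plan is to express the boundary of $B_{s_{G}}(x,r)$ as a finite union of analytic arcs---one Apollonius arc per edge and one ellipse arc per vertex of $G$---and to find the radii for which this union collapses to a single arc.

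By the reflection principle, if $\ell_i$ carries edge $e_i$ and $x'_i$ is the reflection of $x$ in $\ell_i$, then $\sup_{z\in\ell_i}|x-y|/(|x-z|+|z-y|)=|x-y|/|x'_i-y|$, attained at $[x'_i,y]\cap\ell_i$; this equals $\sup_{z\in e_i}$ when the attaining point lies in $e_i$, and otherwise reduces to a vertex contribution $|x-y|/(|x-v|+|v-y|)$ for an endpoint $v$ of $e_i$.  Consequently $B_{s_{G}}(x,r)$ is the intersection inside $G$ of the Apollonius disks $D_i=\{y:|x-y|<r|x'_i-y|\}$ and certain vertex-ellipse interiors, and its boundary is a union of arcs of $C_i=\partial D_i$ and of ellipses with one focus at $x$.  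Since each arc is analytic, the ball is smooth iff the boundary reduces to a single arc of one $C_{i^*}$, which happens iff $D_{i^*}\subseteq D_j$ for every other $j$; a short trigonometric comparison rules out the vertex-ellipse constraints becoming active first in the two domains considered.

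The critical $r$ at which $D_{i^*}\subseteq D_j$ first fails is the one making $C_{i^*}$ internally tangent to the perpendicular bisector $L_{i^*j}$ of $x'_{i^*}$ and $x'_j$, since $C_{i^*}\cap C_j\subset L_{i^*j}$.  When $\ell_{i^*},\ell_j$ meet at a polygon vertex $v$, both reflections fix $|x-v|$, so $L_{i^*j}$ passes through $v$; parametrising $L_{i^*j}$ by arclength $s$ from $v$, the restriction of $|x-y|^2=r^2|x'_{i^*}-y|^2$ to $L_{i^*j}$ is a quadratic in $s$, and vanishing of its discriminant yields $r_{\mathrm{crit}}=2|\sin\alpha|$ for a trigonometric angle $\alpha$ determined by $x$ and the two reflecting lines; rewriting $2|x-v||\sin\alpha|$ as a linear function of $x_1,x_2$ produces the closed linear-over-$|x-v|$ forms in the statement.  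The one pair of parallel edges in the rectangle has its bisector parallel to the edges, and the analogous computation there produces $|x_2|/b$ or $|x_1|/a$ instead.

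For $T_{\pi/6,2}$ I use the $x_2\mapsto-x_2$ symmetry to assume $x_2\geq 0$, which rules out $e_2$ as the binding edge; Case~1 ($d(x,\ell_1)\leq d(x,\ell_3)$, $e_1$ binding) yields the two terms of $r_0$ from the pairs $(e_1,e_2)$ and $(e_1,e_3)$ meeting at $(0,0)$ and $(\sqrt{3},1)$, while Case~2 yields $r_1$ from the single binding pair $(e_3,e_1)$, the pair $(e_3,e_2)$ being strictly slack under $x_2\geq 0$.  For $R_{a,b}$ the dihedral symmetry reduces to $x_1,x_2\geq 0$ and the same top-binding vs.\ right-binding dichotomy produces $r_2$ and $r_3$, with the parallel-edge pair contributing $|x_2|/b$ or $|x_1|/a$ and the adjacent-edge pair at $(a,b)$ contributing the other term.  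The hard part will be the algebraic identification of $r_{\mathrm{crit}}=2|\sin\alpha|$ with the clean closed-form expressions above, together with the sign bookkeeping that selects the correct non-negative branch of $r_0,\ldots,r_3$ and the verification that the Apollonius-containment failure always precedes reaching the rays bounding the validity regions of the reflection principle.
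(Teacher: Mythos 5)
Your proposal follows essentially the same route as the paper: the paper invokes \cite[Lemma 5.4]{hklv} (restated as Lemma \ref{riku2}) to write $B_{s_G}(x,r)$ as the intersection of the half-plane $s$-balls, which are exactly your Apollonius disks $D_i$ with foci $x$ and the reflected points $x_i'$, and then characterizes smoothness by one disk being contained in all the others, checked via the explicit centers and radii and the criterion $|a-b|\le r_2-r_1$; your tangency-to-the-perpendicular-bisector computation is an equivalent way to extract the same critical radii (e.g.\ it reproduces $|x_2|/b$ for the parallel pair in $R_{a,b}$). Two minor remarks: the vertex-ellipse constraints you carry along are automatically redundant, since for a convex polygon $s_P=\max_i s_{H_i}$ exactly (each vertex lies on the line $\ell_i$, so its contribution is dominated by $s_{H_i}$), so no separate trigonometric comparison is needed; and the closed-form algebra you defer as ``the hard part'' is in fact the bulk of the paper's written proof, so it would still need to be carried out.
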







\section{Algorithms for numerical computation of $s_G$}

The hyperbolic metric $\rho_{\mathbb{H}^n}$ and $\rho_{\mathbb{B}^n}$ of the upper
half plane ${\mathbb{H}^n} = \{ (x_1,\ldots,x_n)\in {\mathbb{R}^n}:  x_n>0 \} $
and of the unit ball ${\mathbb{B}^n}= \{ z\in {\mathbb{R}^n}: |z|<1 \} $ can be defined as weighted metrics with the weight functions
 $w_{\mathbb{H}^n}(x)=1/{x_n}$ and
 $w_{\mathbb{B}^n}(x)=2/(1-|x|^2)\,,$ respectively. This definition as such
is rather abstract and for applications concrete formulas
are needed. By \cite[p.35]{b} we have
\begin{equation}\label{cro}
\cosh{\rho_{\mathbb{H}^n}(x,y)}=1+\frac{|x-y|^2}{2x_ny_n}
\end{equation}
for all $x,y\in \mathbb{H}^n$, and by \cite[p.40]{b} we have
\begin{equation}\label{sro}
\sinh{\frac{\rho_{\mathbb{B}^n}(x,y)}{2}}=\frac{|x-y|}{\sqrt{1-|x|^2}{\sqrt{1-|y|^2}}}
\end{equation}
and
\begin{eqnarray}\label{tro}
\tanh{\frac{\rho_{\mathbb{B}^n}(x,y)}{2}}&=&\frac{|x-y|}{\sqrt{|x-y|^2+(1-|x|^2)(1-|y|^2)}}\\
&=&\frac{|x-y|}{|x||x^*-y|},\, x^*=\frac{x}{|x|^2},\nonumber
\end{eqnarray}
for all $x,y\in\mathbb{B}^n\setminus\{0\}$. As shown in \cite[Theorem 4.2]{hklv} we have
\begin{equation} \label{s_H0}
s_{\mathbb{H}^n}(x,y)= \tanh {\frac{\rho_{\mathbb{H}^n}(x,y)}{2}}=\frac{|x-y|}{|x-\bar{y}|},
\end{equation}
for all $x,y\in\mathbb{H}^n$, where $\bar{y}$ is the reflection of $y$ with respect to $ \partial{\mathbb H}^n$. See also \eqref{s_H} below. Unfortunately,
there is no formula similar to \eqref{s_H0} for the case of $s_{\mathbb{B}^n}\,.$
Therefore inequalities for  $s_{\mathbb{B}^n}\,$ are needed, see Section 3 below.

Explicit formulas for $s_G(x,y)$ are known only for a few particular cases. Our goal is to list several domains
for which we have written algorithms in the MATLAB language. The definition of $s_G(x,y)$ readily shows that the supremum is attained and that a point $z\in\partial{G}$ with $s_G(x,y)=\frac{|x-y|}{|x-z|+|z-y|}$ is located on
the maximal ellipse with foci $x$ and $y$ and contained in $\overline{G}$. The point $z$ is called an extremal point. Finding this maximal ellipse is however a difficult task even for $\mathbb{B}^2$.
In the course of this research we have extensively made use of experiments using the algorithms in this section. In particular, Conjecture \ref{conj1.9} is based on these algorithms.
\begin{algor}{\bf $s_{\mathbb{B}^2}$}
\end{algor}
Let $x,y\in\mathbb{B}^2$ and $z\in\partial{\mathbb{B}^2}$ be such that
\begin{equation}
s_{\mathbb{B}^2}(x,y)=\frac{|x-y|}{|x-z|+|z-y|}.
\end{equation}
The point $z$ can be found by choosing $m$ equally spaced points on the smaller arc on $\partial{\mathbb{B}^2}$ between $\frac{x}{|x|}$ and $\frac{y}{|y|}$ and selecting the point $z$ that minimizes the expression $|x-z|+|z-y|$ among the chosen points, say for $m= 1000$.
\begin{figure}[h]
\begin{center}
     \includegraphics[width=10cm]{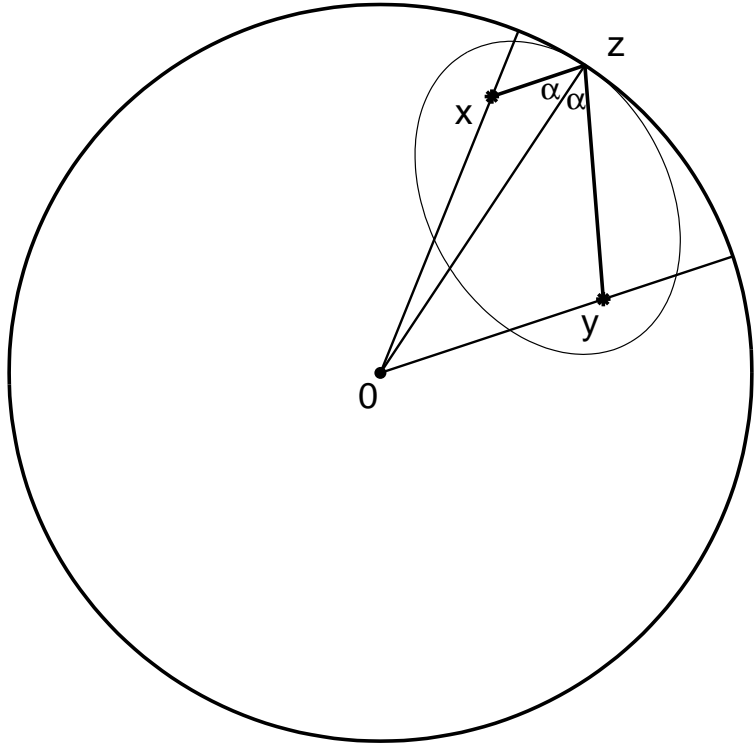}
\caption{The maximal ellipse with foci $x$ and $y$ and contained in $\overline{\mathbb{B}^2}$. }
\end{center}
    \end{figure}

\begin{algor}{\bf  $s_{\mathbb{H}^2}$}
\end{algor}
Suppose that $x,y\in\mathbb{H}^2$ are two distinct points. An extremal point $z\in\partial{\mathbb{H}^2}=\mathbb{R}$ for $s_{\mathbb{H}^2}(x,y)$ minimizes
the sum $$|x-z|+|z-y|=|x-z|+|z-\bar{y}|,$$
where $\bar{y}$ is as in formula \eqref{s_H0}. Therefore $z$ is the unique point of intersection of the segment $[x,\bar{y}]$ with the real axis.
In conclusion,
\begin{equation} \label{s_H}
s_{\mathbb{H}^2}(x,y)=\frac{|x-y|}{|x-\bar{y}|}.
\end{equation}

\begin{rem}\label{2.10}
Sometimes it is convenient to write the formula \eqref{s_H} in a different form which we give now. Suppose that $x, y\in\mathbb{H}^2$ with $d(x)=d(x,\partial{\mathbb{H}^2})\geq d(y)$. Let $w$ be a point on the segment
$[x,\overline{x}]\cap \mathbb{H}^2$ with $d(w)=d(y)$ and $\alpha=\measuredangle (w-y,x-y)$. Then clearly $|w-y|=|\overline{w}-\overline{y}|=|x-y|\cos{\alpha}$ and $\sin{\alpha}=(d(x)-d(y))/|x-y|$. This yields
\[
|y-w|=\sqrt{|x-y|^2-(d(x)-d(y))^2}
\]
and also, by the Pythagorean Theorem,
\[
|x-\overline{y}|^2=|x-\overline{w}|^2+|\overline{w}-\overline{y}|^2=|x-y|^2+4d(x)d(y)
\]
In conclusion, the formula \eqref{s_H} can also be written as
\[
s_{\mathbb{H}^2}(x,y)=\frac{|x-y|}{\sqrt{|x-y|^2+4d(x)d(y)}}.
\]
Therefore we see by \eqref{s_H0} that
\[
p_{\mathbb{H}^2}(x,y)=s_{\mathbb{H}^2}(x,y)=\tanh{\frac{\rho_{\mathbb{H}^2}(x,y)}{2}}.
\]
\end{rem}

\begin{algor}{\bf   $s_R$, $R$ is a rectangle}
\end{algor}
Given distinct $x,y$ in a rectangle $R$, the extremal boundary point $z$ as in \eqref{sm} must be located on one of the four sides $T_j\, ,j=1,\cdots ,4$ of $R\,.$
 If $y_j$ is the reflection point of $y$ with respect to side $T_j\, ,j=1,\ldots,4$, then $z_j=[x,y_j]\cap\partial{R}$ and
\begin{equation}
s_R(x,y)=\frac{|x-y|}{\min \{|x-y_j|: j=1,2,3,4\}}.
\end{equation}

\begin{algor}{\bf  $s_A$, $A$ is a sector}
\end{algor}
Let $\alpha\in (0,\pi)$ and $A=\{z\in\mathbb{C}:0<\arg{z}<\alpha\}$. Given $x,y\in A$,
the extremal point $z\in\partial{A}$ for $s_A(x,y)$ has only two options: it
is located either on the real axis $\{x\in\mathbb{R}: x \geq 0 \}$ or on the ray
$\{t \exp{i\alpha}: t>0 \}$.
In the first case by \eqref{s_H}

$$s_A(x,y)=\frac{|x-y|}{|x-\bar{y}|},$$
whereas in the second case again by \eqref{s_H}
$$s_A(x,y)=\frac{|x-y|}{|x-y_2|},$$
where $y_2=|y|\exp{i(2\alpha-\arg{y})}$. In conclusion, in both cases
\begin{equation}
s_A(x,y)=\frac{|x-y|}{\min\{|x-\bar{y}|,|x-y_2|\}}.
\end{equation}
This idea can be extended in a straightforward way to
triangles and other convex polygons.

\begin{algor}{\bf  $s_P$, $P$ is a polygon}
\end{algor}
Suppose that $v_1,v_2,\ldots,v_m $ are points in the plane such that the polygon with these points as vertices
is a bounded Jordan domain. 
The method is based on exhaustive tabulation of function values and choosing the optimal point on $\partial P\,.$
We parameterize $\partial{P}$ using the polygonal curve length as a parameter, measured from $v_1$ via
the points $v_j$. Then this real parameter varies on $[0,L]$ where $$L=\Sigma_{j=1}^{m}|v_j-v_{j+1}|,$$
and we agree that $v_{m+1}=v_1$. The parametrization $z:[0,L]\longrightarrow \partial{P}$ enables us to find all
the competing points for the definition of $s_P(x,y)$. Then finding $s_P(x,y)$ becomes a $1-$dimensional minimization problem,
which can be solved by exhaustive tabulation.
%

\section{Comparison results for $s_G$}\label{section3}

The goal of this section is to find inequalities between distances of points in terms of simple expressions. Problems of two kinds are considered. First, if $m_G$ is a metric defined in a domain $G\subset\Rn$, $x, y\in G$, then we compare $m_G(x,y)$ and $m_{G_1}(x,y)$ where $G_1$ is a simple domain. Second, if we have two metrics $e_G$ and $d_G$ on a domain $G\subset \Rn$, then we estimate $e_G(x,y)$ in terms of $d_G(x,y)$. In several cases, this comparison is carried out not in the whole domain but in $B^n(x_0,\lambda d(x_0,\partial{G}))$ where $x_0\in G$ is a fixed point and $\lambda\in (0,1)$ is a constant. In some results we consider the case of $\mathbb{B}^n$. Some examples of the metrics we use are the hyperbolic and distance ratio metrics and the $s$ and $v$ metrics.

From the definition \eqref{sm} of $s_G$ it is clear that
$s_G$ has three important properties:
\begin{itemize}
\item [(a)]  {\it monotonicity with respect
to domain}, i.e. if $D_1,D_2 \subset {\mathbb R}^n $ are domains with
$D_1 \subset D_2$ and $x,y\in D_1\,,$ then $s_{D_1}(x,y)  \ge s_{D_2}(x,y).$
\item [(b)] {\it Sensitivity to boundary variation,} i.e.
if $D \subset {\mathbb R}^n $ is a domain and $x_0 \in D\,,$ then
the numerical values of $s_D(x,y)$ and $s_{D\setminus \{x_0\}}(x,y)$ are not
comparable if $x,y$ are very close to $x_0\,.$
\item [(c)] For fixed $x,y\in G,$ {\it one extremal boundary point $z \in \partial G$ determines
the numerical value of $s_G(x,y)\,.$}
\end{itemize}

Our goal is to find various inequalities for $s_G$
in terms of expressions that are explicit. In particular, we hope to get
rid of the supremum in \eqref{sm}, and hope to use expressions that have the
above properties (a)--(c). Most of these expressions define metrics and we
will show that these metrics are locally quantitatively equivalent.

For a domain $G\subset \mathbb{R}^n, x,y\in G,$ we define {\it the visual angle metric} \cite{klvw}
by
$$
v_G(x,y) = \sup  \{ \measuredangle(x,z,y): z \in \partial G  \}\,.
$$
The metrics $j_G$, $v_G$ and $s_G$ have the aforementioned three properties (a)--(c) and $p_G \le 1,
v_G \le \pi$ while
$j_G$ is unbounded. All of the expressions $s_G, v_G, j_G, p_G$ are invariant
under similarity transformations.

\medskip

\begin{rem}
Because the inequality $p_{\mathbb{B}^2}(t,0)+p_{\mathbb{B}^2}(0,-t)>p_{\mathbb{B}^2}(t,-t)$, fails for small $t\,,$ we see that $p_G$ is not a metric.
\end{rem}

\medskip

\begin{lem}\cite[  Lemma 2.41(2)]{vu}, \cite[ Lemma 7.56]{avv}\label{10} Let $G \in \{  {\mathbb B}^n , {\mathbb H}^n\}\,,$ and let
 $\rho_G$ stand for the respective hyperbolic metric. Then for all $x,y\in G$
 $$ j_G(x,y)\le \rho_G(x,y) \le 2j_G(x,y).$$
\end{lem}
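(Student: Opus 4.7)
The plan is to reduce each of the two inequalities $j_G\le\rho_G\le 2j_G$ to an elementary algebraic inequality by invoking the closed-form expressions \eqref{cro}, \eqref{sro} and \eqref{tro} for the hyperbolic metric. By the symmetry of all three quantities in $x$ and $y$, I would assume without loss of generality that $d_G(x)\le d_G(y)$, so that $\min\{d_G(x),d_G(y)\}=d_G(x)=:d$, and set $a:=|x-y|$. Throughout I would exploit the $1$-Lipschitz property of the boundary distance, which in the half-space gives $y_n-x_n\le a$ and in the ball gives $|x|-|y|\le a$, hence $|x|^2-|y|^2\le 2a$.

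For the half-space, formula \eqref{cro} combined with the half-angle identities yields
\[
e^{\rho_{\mathbb{H}^n}(x,y)/2}=\frac{a+\sqrt{4x_ny_n+a^2}}{2\sqrt{x_ny_n}}.
\]
Setting $p=x_n\le y_n=q$, the lower bound $j_{\mathbb{H}^n}\le\rho_{\mathbb{H}^n}$ reduces, after clearing denominators, to the inequality $a+\sqrt{4pq+a^2}\ge 2q$, which is immediate from the Lipschitz bound $a\ge q-p$ together with the identity $\sqrt{4pq+(q-p)^2}=p+q$. The upper bound $\rho_{\mathbb{H}^n}\le 2j_{\mathbb{H}^n}$, rewritten as $e^{\rho/2}\le 1+a/p$, reduces after using $\sqrt{4pq+a^2}\le 2\sqrt{pq}+a$ to a polynomial inequality in $a,p,q$ that splits into manifestly nonnegative summands once $p\le q$ is invoked.

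For the ball, set $s=1-|x|^2$, $t=1-|y|^2$ and $D:=\sqrt{a^2+st}$; formula \eqref{tro} then gives $e^{\rho_{\mathbb{B}^n}(x,y)}=(a+D)^2/(st)$. The main obstacle I anticipate is the bookkeeping in this case: a naive replacement of $(1-|x|^2)(1-|y|^2)$ by $(1-|x|)(1-|y|)$ discards the factor $(1+|x|)(1+|y|)$ that is needed to close the estimate, so one must keep the full factorization $st=(1-|x|)(1+|x|)(1-|y|)(1+|y|)$ throughout. Assuming $|y|\le|x|$, this factorization together with $(1+|x|)(1+|y|)\ge 1$ yields the ball analogue $\sqrt{st}\ge d$ of the Lipschitz bound in the half-space. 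With this in hand, the lower bound $j_{\mathbb{B}^n}\le\rho_{\mathbb{B}^n}$ reduces to $2d(a+D)\ge st$, and further to $a+D\ge 1-|y|^2$, which after squaring is equivalent to $|x|^2-|y|^2\le 2a$; the upper bound $\rho_{\mathbb{B}^n}\le 2j_{\mathbb{B}^n}$ reduces similarly, after squaring, to a polynomial estimate in $a,d,s,t$ that follows from the same factorization together with the Lipschitz bound.
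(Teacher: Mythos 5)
The paper offers no proof of this lemma at all: it is quoted verbatim from the cited sources (Vuorinen, Lemma 2.41(2), and Anderson--Vamanamurthy--Vuorinen, Lemma 7.56), so your attempt is being measured against those references rather than against an argument in the text. Three of your four sub-arguments are sound. In the half-space, $e^{\rho/2}=(a+\sqrt{a^2+4pq})/(2\sqrt{pq})$ is correct, the lower bound does reduce to $a+\sqrt{a^2+4pq}\ge 2q$, which follows from $a\ge q-p$, and the upper bound follows from $\sqrt{a^2+4pq}\le a+2\sqrt{pq}$ together with $\sqrt{pq}\ge p$. The ball lower bound also closes exactly as you say: $2d(a+D)\ge st$ follows from $st\le 2dt$ and $a+D\ge t$, the latter being equivalent after squaring to $|x|^2-|y|^2\le 2a$.

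The gap is in the ball upper bound. Setting $\kappa=\sqrt{st}/d\ge 1$, the inequality $\rho_{\mathbb{B}^n}(x,y)\le 2j_{\mathbb{B}^n}(x,y)$ is equivalent, after squaring, to $a(2-\kappa)\le 2d(\kappa-1)$. This is trivial when $\kappa\ge 2$, but the case $1\le\kappa<2$ is unavoidable (e.g.\ whenever $|x|=|y|$ one has $\sqrt{st}=d(1+|x|)<2d$), and there the left-hand side increases with $a$, so the inequality requires an \emph{upper} bound on $a=|x-y|$. Your declared toolkit --- the factorization of $st$ and the Lipschitz estimate $|x|-|y|\le a$ --- supplies only \emph{lower} bounds on $a$ and therefore cannot close this case. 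The missing ingredient is $a\le |x|+|y|=2-d(x)-d(y)$, and it is used at its extreme: for $y=-x$ one has $\rho_{\mathbb{B}^n}(x,y)=2j_{\mathbb{B}^n}(x,y)$ exactly, so no proof can avoid an upper bound on $|x-y|$. Even after inserting this bound, what remains (with $u=1-|x|\le v=1-|y|$) is the genuinely two-variable inequality $4u(2-v)\le v(2-u)\,(2+u-v)^2$; it does hold --- writing $v(2-u)=u(2-v)+2h$ with $h=v-u$ reduces it to $2(2-h)^2\ge u(2-v)(4-h)$, which follows from $u(2-v)\le 1-h$ --- but it is an additional computation, not a consequence of the ingredients you list. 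So the half-space case and the ball lower bound stand, while the ball upper bound needs both a different input and a further nontrivial verification.
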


\medskip

The following theorem solves a question posed in \cite[Open problem 3.2]{hklv}.

\begin{thm} \label{riku}
  Let $G$ be a proper subdomain of $\mathbb{R}^n$. Then for all $x,y \in G$ we have
  \[
    s_G(x,y) \leq \frac{1}{\log 3}j_G(x,y)
  \]
  and the constant $\frac{1}{\log 3} \approx 0.91$ is the best possible.
\end{thm}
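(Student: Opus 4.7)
The plan is to reduce the inequality to a simple one-variable calculus fact, and then produce a single domain where the ratio $s_G/j_G$ hits $1/\log 3$. Throughout, write $d = \min\{d_G(x),d_G(y)\}$, $r = |x-y|$, and $t = r/d$, so that $j_G(x,y) = \log(1+t)$.

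For the upper bound I would start from the definition of $s_G$. For any competitor $z\in\partial G$, the triangle inequality gives $|x-z|+|z-y|\geq r$, while the boundary estimates $|x-z|\geq d_G(x)$ and $|z-y|\geq d_G(y)$ yield $|x-z|+|z-y|\geq d_G(x)+d_G(y)\geq 2d$. Taking supremum in \eqref{sm} at once produces
\[
s_G(x,y)\;\leq\;\min\!\left\{1,\tfrac{t}{2}\right\}.
\]
So the theorem reduces to the scalar inequality $\min\{1,t/2\}\leq \log(1+t)/\log 3$ for $t\geq 0$. I would split at $t=2$: for $t\geq 2$ the right-hand side is already $\geq 1$, and for $t\in[0,2]$ I would study $f(t):=2\log(1+t)-t\log 3$, observing $f(0)=f(2)=0$ and $f'(t)=2/(1+t)-\log 3$, which is strictly decreasing with $f'(0)>0$ and $f'(2)<0$. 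Hence $f$ is concave on $[0,2]$ with matching endpoint zeros, and therefore nonnegative, which closes the inequality.

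For sharpness I would look for a configuration realising equality in every step: $t=2$, both distance-to-boundary lower bounds attained, and the triangle inequality saturated, all by one and the same boundary point. The natural candidate is $G=\mathbb{R}^n\setminus\{0\}$ with $x=e_1$, $y=-e_1$, whose only boundary point is $0$. Then $d_G(x)=d_G(y)=1$, $|x-y|=2$, so $j_G(x,y)=\log 3$ and $s_G(x,y)=|x-y|/(|x|+|y|)=1$, giving exactly $s_G(x,y)/j_G(x,y)=1/\log 3$. I do not expect a genuine obstacle here; the only mildly delicate point is the calculus step showing $f\geq 0$ on $[0,2]$, which is immediate from the endpoint zeros plus the monotonicity of $f'$.
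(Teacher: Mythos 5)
Your proof is correct and follows essentially the same route as the paper's: both arguments reduce to the bound $s_G(x,y)\leq\min\{1,|x-y|/(2\min\{d_G(x),d_G(y)\})\}$, split at the threshold $|x-y|=2\min\{d_G(x),d_G(y)\}$, settle the remaining range by an elementary one-variable inequality (the paper via monotonicity of $\log(1+u)/u$, you via concavity of $2\log(1+t)-t\log 3$ with matching endpoint zeros), and exhibit sharpness with antipodal points in $\mathbb{R}^n\setminus\{0\}$. Your direct derivation of the key bound from $|x-z|\geq d_G(x)$ and $|z-y|\geq d_G(y)$ is a touch cleaner than the paper's detour through $s_{\mathbb{B}^n(x,t)\cup\mathbb{B}^n(y,t)}$, but the substance is the same.
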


\begin{proof}
  Let us fix the points $x$ and $y$. By rescaling the domain we may assume that $|x-y|=1$. We can also assume that $d(x) \leq d(y)$, because otherwise we can swap the points.

  We denote $t=d(x)>0$. Now
  \[
    j_G(x,y) = \log \left( 1+\frac{1}{t} \right)
  \]
  and we divide the proof into two cases: $t \le \frac{1}{2}$ and $t > \frac{1}{2}$.

  We assume first that $t \le \frac{1}{2}$. Now $j_G(x,y) \geq \log 3$ and since $s_G(x,y) \leq 1$ we have
  \[
    s_G(x,y) \leq 1 \leq \frac{j_G(x,y)}{\log 3}.
  \]

  We assume then that $t > \frac{1}{2}$. We want to maximize $s_G(x,y)$ in terms of $t$. 
Now
  \[
    s_G(x,y) \le s_{\mathbb{B}^n(x,t) \cup \mathbb{B}^n(y,t)}(x,y) = \frac{|x-y|}{|x-z|+|y-z|} = \frac{1}{2t}
    \]
  $z\in \partial{(\mathbb{B}^n(x,t) \cup \mathbb{B}^n(y,t))}$, and we want to find a lower bound for the function
  \[
    f(t) = \frac{j_G(x,y)}{s_G(x,t)} \ge 2t \log \left( 1+\frac1t \right), \quad t>\frac12.
  \]
  We can show that $g(t)=\frac{\log(1+t)}{t}$ is decreasing for $t$, because
  \begin{eqnarray*}
  g'(t)&=&\frac{\frac{t}{1+t}-\log(1+t)}{t^2}\\
  &\leq& \frac{\frac{t}{1+t}-\frac{2t}{2+t}}{t^2} \leq 0 \\
  \end{eqnarray*}
  so it is increasing for $\frac1t$, thus
  $f(t)$ is increasing. We collect $f(t) > f(\frac12) = \log 3$ and the claimed inequality is proved.

  The constant $\frac{1}{\log 3}$ can be easily verified to be the best possible by investigating the domain $G=\mathbb{R}^n \setminus \{ 0 \}$. For any $x \in G$ selecting $y=-x$ gives $s_G(x,y) = 1$ and $j_G(x,y)=\log 3$.
\end{proof}

\begin{lem}\label{sp}
(1) If $x,y\in G \subset\mathbb{R}^n$ and $G$ is convex, then
\[
 s_G(x,y)\leq p_G(x,y).
\]
Here equality holds for
all $x,y \in G $ if $ G= \mathbb{H}^n\,.$

(2) For $x,y\in G \subset\mathbb{R}^n$,
\[
p_G(x,y)\leq \sqrt{2} s_G(x,y).
\]
\end{lem}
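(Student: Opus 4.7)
The two inequalities have quite different flavors: (1) uses convexity in an essential way through a supporting hyperplane, while (2) is a pure triangle-inequality/AM--GM calculation valid for every proper subdomain. I would prove them separately.

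For part (1), the plan is a reflection argument. Fix $z\in\partial G$ and choose a supporting hyperplane $H$ of $G$ at $z$, so that $G$ lies in one closed half-space $H^{+}$. Let $y^{H}$ denote the reflection of $y$ across $H$; passing through $z$ by the triangle inequality,
\[
|x-z|+|z-y|=|x-z|+|z-y^{H}|\ge |x-y^{H}|,
\]
and the elementary identity $|x-y^{H}|^{2}=|x-y|^{2}+4\,d(x,H)\,d(y,H)$, valid because $x$ and $y$ lie in the same half-space $H^{+}$, reduces matters to comparing $d(x,H)$ and $d(y,H)$ with $d_{G}(x)$ and $d_{G}(y)$. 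The main geometric step, and what I expect to be the only subtle point, is the inequality $d(x,H)\ge d_{G}(x)$: the perpendicular foot from $x$ to $H$ either lies in $\overline G$, in which case it is a point of $\partial G$ (since $G\cap H=\varnothing$ by the supporting property, as $G$ is open and $G\subset H^{+}$), or it lies outside $\overline G$, in which case the perpendicular segment must cross $\partial G$ strictly before reaching $H$. Either way $d_{G}(x)\le d(x,H)$, and so
\[
|x-z|+|z-y|\ge \sqrt{|x-y|^{2}+4\,d_{G}(x)\,d_{G}(y)}
\]
for every $z\in\partial G$. Taking the supremum in the definition of $s_{G}$ inverts the inequality and gives $s_{G}(x,y)\le p_{G}(x,y)$. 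The equality case $G=\mathbb{H}^{n}$ is already recorded in Remark \ref{2.10}.

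For part (2), the plan is simply to exhibit one competing boundary point. Assume without loss of generality that $d_{G}(x)\le d_{G}(y)$, and pick $z_{x}\in\partial G$ realizing $|x-z_{x}|=d_{G}(x)$. The triangle inequality gives $|y-z_{x}|\le |x-y|+d_{G}(x)$, hence
\[
\bigl(|x-z_{x}|+|y-z_{x}|\bigr)^{2}\le \bigl(2\,d_{G}(x)+|x-y|\bigr)^{2}.
\]
Expanding this square and applying the AM--GM inequality $4\,d_{G}(x)|x-y|\le 4\,d_{G}(x)^{2}+|x-y|^{2}$, together with the hypothesis $d_{G}(x)^{2}\le d_{G}(x)\,d_{G}(y)$, bounds the right-hand side by $8\,d_{G}(x)^{2}+2|x-y|^{2}\le 2\bigl(|x-y|^{2}+4\,d_{G}(x)\,d_{G}(y)\bigr)$. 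Taking reciprocals and multiplying by $|x-y|$ yields $s_{G}(x,y)\ge p_{G}(x,y)/\sqrt{2}$, as required. The only delicate choice is anchoring $z_{x}$ at the point with the smaller value of $d_{G}$; with the opposite choice the final step $d_{G}(x)^{2}\le d_{G}(x)\,d_{G}(y)$ would fail, and this is what drives the WLOG assumption.
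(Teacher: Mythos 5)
Your proposal is correct and follows essentially the same route as the paper: part (1) is the same reflection-across-a-supporting-line argument (the paper phrases it via the extremal boundary point and the tangent line, while you run the inequality for an arbitrary $z\in\partial G$ and then take the supremum, which is a slightly more careful variant), and part (2) is the same estimate from the nearest boundary point combined with $(a+b)^2\le 2(a^2+b^2)$, which is exactly your AM--GM step. No gaps.
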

\begin{proof}
(1) Suppose that $z\in \partial G$ is an extremal boundary point for the $s$-metric for which
the equality holds in \eqref{sm}. We draw a line $L$ through $z$, tangent to $\partial{G} \,.$ Let $\overline{y}$ be the reflection of $y$ in the line $L$. By geometry, see Remark \ref{2.10},
$$|x-z|+|z-y|=|x-\overline{y}|=\sqrt{|x-y|^2+4 d_1(x)d_1(y)},$$
$d_1(x)=d(x,L)$, $d_1(y)=d(y,L)\,.$
Because $G$ is convex it is clear that $L$ is outside $G$, but $d(x), d(y)$ are the shortest distances from $x,y$ to $\partial{G}$
, so obviously $d(x)\leq d_1(x)$, $d(y)\leq d_1(y)$, thus
\begin{eqnarray}
 s_G(x,y) &=& \frac{|x-y|}{|x-z|+|z-y|}      \nonumber \\
   &=& \frac{|x-y|}{\sqrt{|x-y|^2+4 d_1(x)d_1(y)}} \nonumber \\
   &\leq & \frac{|x-y|}{\sqrt{|x-y|^2+4 d(x)d(y)}} \nonumber \\
   &=& p_G(x,y).\nonumber
\end{eqnarray}
(2) Fix $x, y\in G$, $z\in\partial{G}$, such that $d(x)=|x-z|$. By symmetry we may assume $d(x)\leq d(y)$ and then
\beq\label{3s}
s_G(x,y)\geq \frac{|x-y|}{|x-y|+2d(x)}.
\eeq
Now by \cite[1.58 (13)]{avv} and \eqref{3s}
\bequu
p_G(x,y) &\leq& \frac{|x-y|}{\sqrt{|x-y|^2+4d(x)^2}}\\
&\leq & \frac{|x-y|}{2^{1/2-1}(|x-y|+2d(x))}\\
&\leq & \frac{\sqrt{2}|x-y|}{|x-y|+2d(x)}\leq \sqrt{2} s_G(x,y).
\eequu
\qedhere
\end{proof}
It is easy to see that convexity cannot be omitted from Lemma \ref{sp} (1). For instance if $G=\R\setminus\{0\}$ and $x=(0,1)=-y$, then the inequality in Lemma \ref{sp} (1) fails.
\begin{lem}
For $x,y \in \mathbb{B}^n$ we have
$$s_{\mathbb{B}^n}(x,y) \geq s_{\mathbb{B}^n}(x_s,y_s) = \frac{|x-y|}{\sqrt{|x-y|^2 + 4(1-|m|)^2}},$$
where $m=\frac{x_0+y_0}{2}$ and $x_0,y_0\in\partial{{\mathbb{B}^n}}$ are the points of intersection of the line through $x$ and $y $ with $\partial{{\mathbb{B}^n}},$  $|x-y|=|x_s-y_s|\,,$  $|x_s|=|y_s|$
and further
$$|m|=\frac{\sqrt{|x|^2|y|^2-(x\cdot y)^2}}{|x-y|},$$
$$x_s=x_0+\frac{y_0-x_0}{|y_0-x_0|}(\sqrt{1-|m|^2}-\frac{|x-y|}{2}),$$
$$y_s=y_0+\frac{x_0-y_0}{|x_0-y_0|}(\sqrt{1-|m|^2}-\frac{|x-y|}{2}),$$
and hence
$$s_{\mathbb{B}^n}(x,y) \geq \frac{|x-y|^2}{|x-y|^4+4(|x-y|-\sqrt{|x|^2|y|^2-(x\cdot y)^2})^2}.$$
\end{lem}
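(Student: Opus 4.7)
My plan is to prove the lemma in four steps: (i) verify the coordinate formulas for $|m|$ and for $x_s,y_s$; (ii) establish the monotonicity $s_{\mathbb{B}^n}(x,y)\ge s_{\mathbb{B}^n}(x_s,y_s)$ by a translation argument on inscribed ellipses; (iii) derive the explicit lower bound on $s_{\mathbb{B}^n}(x_s,y_s)$ by evaluating at a single well-chosen boundary point; (iv) combine and substitute to obtain the final inequality in $x,y$ alone.

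For (i), $m=(x_0+y_0)/2$ is the midpoint of the chord $[x_0,y_0]$ through $x,y$; since the perpendicular from the center of a ball to any chord meets it at its midpoint, $|m|$ equals the distance from the origin to the line through $x$ and $y$. By the parallelogram (Gram) area identity, this distance is $\sqrt{|x|^2|y|^2-(x\cdot y)^2}/|x-y|$. The formulas for $x_s,y_s$ then follow by parametrizing the chord via the unit vector $(y_0-x_0)/|y_0-x_0|$, using $|x_0-m|=|y_0-m|=\sqrt{1-|m|^2}$ and imposing $|x_s-m|=|y_s-m|=|x-y|/2$ with $x_s,y_s$ symmetric about $m$.

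For (ii), the main step, I write $s_{\mathbb{B}^n}(x,y)=|x-y|/(2a(x,y))$, where $2a(x,y)=\min_{z\in\partial\mathbb{B}^n}(|x-z|+|z-y|)$ is the sum of focal distances of the largest ellipse with foci $x,y$ inscribed in $\overline{\mathbb{B}^n}$; the claim becomes $a(x,y)\le a(x_s,y_s)$. Let $E_s^{*}$ be the extremal inscribed ellipse for $(x_s,y_s)$ and let $z_s^{*}\in\partial\mathbb{B}^n$ be one of its tangent points; the reflection $\sigma$ across the hyperplane through the origin perpendicular to $\hat{u}=(y_s-x_s)/|y_s-x_s|$ is a Euclidean isometry preserving $\mathbb{B}^n$ and swapping $x_s$ and $y_s$, so $\sigma(z_s^{*})$ is also a tangent point. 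The translate $E_s^{*}+c\hat{u}$, where $c\hat{u}$ is the midpoint displacement from $(x_s,y_s)$ to $(x,y)$, has foci $x,y$ and the same semi-major axis $a_s$. Using $\hat{u}\cdot\sigma(z_s^{*})=-\hat{u}\cdot z_s^{*}$, a direct expansion gives
\[
|z_s^{*}+c\hat{u}|^2+|\sigma(z_s^{*})+c\hat{u}|^2=2+2c^2,
\]
so at least one of the two shifted tangent points has squared norm $\ge 1+c^2>1$ for $c\ne 0$, hence lies outside $\overline{\mathbb{B}^n}$. Thus $E_s^{*}+c\hat{u}\not\subset\overline{\mathbb{B}^n}$, which forces $a(x,y)<a_s$ (since for fixed foci the confocal ellipses are nested and enlarging only makes the situation worse), yielding the strict monotonicity for $c\ne 0$.

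For (iii), I take $z^{*}=m/|m|$ (the degenerate case $|m|=0$ of a chord through the origin is handled directly). Since $x_s-m$ and $y_s-m$ are opposite vectors of length $|x-y|/2$ along $\hat{u}\perp m$, whereas $z^{*}-m$ is parallel to $m$ of length $1-|m|$, the Pythagorean theorem gives $|x_s-z^{*}|=|y_s-z^{*}|=\sqrt{(|x-y|/2)^2+(1-|m|)^2}$, hence
\[
|x_s-z^{*}|+|y_s-z^{*}|=\sqrt{|x-y|^2+4(1-|m|)^2},
\]
and therefore $s_{\mathbb{B}^n}(x_s,y_s)\ge|x-y|/\sqrt{|x-y|^2+4(1-|m|)^2}$. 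Combining (ii) and (iii) and substituting the formula for $|m|$ from (i) yields the claimed lower bound (after correcting what appears to be a missing square root in the stated denominator). The subtle point is the translation argument in (ii), but once the two tangent points are identified via the $\sigma$-symmetry the key identity above is a one-line computation.
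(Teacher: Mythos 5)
Your proof is correct, and its skeleton is the same as the paper's: symmetrize $x,y$ to $x_s,y_s$ along the chord, then estimate $s_{\mathbb{B}^n}(x_s,y_s)$ from below by testing the boundary point $m/|m|$, and finally substitute the expression for $|m|$. Where you genuinely differ is at the key monotonicity step $s_{\mathbb{B}^n}(x,y)\ge s_{\mathbb{B}^n}(x_s,y_s)$: the paper disposes of it with the single sentence that one ``sees easily'' that the extremal ellipse with foci $x_s,y_s$ is larger than the one with foci $x,y$, while you actually prove it. Your argument --- the reflection $\sigma$ in the hyperplane through $0$ orthogonal to $\hat u$ fixes $m$, swaps $x_s$ and $y_s$, hence carries tangent points of the extremal ellipse to tangent points, and the identity $|z_s^*+c\hat u|^2+|\sigma(z_s^*)+c\hat u|^2=2+2c^2$ forces at least one translated tangent point out of $\overline{\mathbb{B}^n}$ when $c\neq 0$ --- is clean and correct; the passage from ``the translated focal region is not contained in $\overline{\mathbb{B}^n}$'' to ``$a(x,y)<a_s$'' rests on convexity of the focal region and the foci being interior points, which your nesting remark covers. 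Two further points are in your favour. First, you prove only ``$\ge$'' where the lemma asserts the equality $s_{\mathbb{B}^n}(x_s,y_s)=|x-y|/\sqrt{|x-y|^2+4(1-|m|)^2}$; this is all the final bound needs, and it is also the prudent choice, since the asserted equality requires the ellipse with foci $x_s,y_s$ through $m/|m|$ to lie in $\overline{\mathbb{B}^n}$, which can fail for long off-center chords (e.g.\ $x_s=(0.1,-0.9)$, $y_s=(0.1,0.9)$ in $\mathbb{B}^2$, where testing $z=(0,1)$ already beats the displayed value). Second, you are right that the last display of the lemma is missing a square root: substituting $|m|$ into $|x-y|/\sqrt{|x-y|^2+4(1-|m|)^2}$ gives $|x-y|^2/\sqrt{|x-y|^4+4(|x-y|-\sqrt{|x|^2|y|^2-(x\cdot y)^2})^2}$, not the expression printed without the outer root.
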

\begin{proof}
If we move $x, y \in \mathbb{B}^n$ to $x_s, y_s \in \mathbb{B}^n$ which are symmetric with respect to midpoint $m$ of the segment $[x_0,y_0]$, then we see easily that
the extremal ellipse with foci $x_s, y_s$ is larger than the extremal ellipse with foci $x, y$ and hence by \eqref{sm},
$$s_{\mathbb{B}^n}(x,y) \geq s_{\mathbb{B}^n}(x_s,y_s) = \frac{|x-y|}{\sqrt{|x-y|^2 + 4(1-|m|)^2}}.$$
Here $|m|$ is the shortest distance from the origin to the line $\overline{xy}$, which by the Law of Cosines, $|m|=\frac{\sqrt{|x|^2|y|^2-(x\cdot y)^2}}{|x-y|},$
and therefore
 $$s_{\mathbb{B}^n}(x_s,y_s) = \frac{|x-y|^2}{|x-y|^4+4(|x-y|-\sqrt{|x|^2|y|^2-(x\cdot y)^2})^2},$$
 and the proof is complete.
\end{proof}
\begin{lem}
For $x, y \in \mathbb{B}^n$ with $|x|>|y|$, $y_r= x- \frac{x}{|x|}|x-y|= - \frac{x}{|x|}(|x|-|x-y|)$,

$$s_{\mathbb{B}^n}(x,y)\geq s_{\mathbb{B}^n}(x,y_r)  = \frac{|x-y|}{|x-y|+2(1-t)} \equiv w(x,y),~ t=\max \{ |x|, |y|\}.$$

\end{lem}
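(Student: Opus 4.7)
\emph{Plan.} The strategy has two steps: compute $s_{\mathbb{B}^n}(x,y_r)$ explicitly, and then obtain the lower bound for $s_{\mathbb{B}^n}(x,y)$ by using the same boundary point as a test point in the supremum.

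First I would evaluate $s_{\mathbb{B}^n}(x,y_r)$ in closed form. By construction the points $0$, $x$, $y_r$ are collinear, and $|x-y_r|=|x-y|$ with $|y_r|=\bigl||x|-|x-y|\bigr|$. By rotational symmetry about the line spanned by these three points, the minimum of $|x-z|+|z-y_r|$ over $z\in\partial\mathbb{B}^n$ is attained at one of the two antipodal points $\pm x/|x|$ where this line meets $\partial\mathbb{B}^n$. A direct comparison of the two sums, using $|x-y|\le |x|+|y|\le 2|x|$ (which follows from the hypothesis $|x|>|y|$), shows that the minimizer is $z_0=x/|x|$. Since $x$ lies on the segment from $y_r$ to $z_0$, one has $|x-z_0|+|z_0-y_r|=2|x-z_0|+|x-y_r|=2(1-|x|)+|x-y|$, which yields
\[
s_{\mathbb{B}^n}(x,y_r)=\frac{|x-y|}{|x-y|+2(1-|x|)}=w(x,y).
\]

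Second, to establish $s_{\mathbb{B}^n}(x,y)\ge s_{\mathbb{B}^n}(x,y_r)$, I would plug the admissible point $z_0=x/|x|\in\partial\mathbb{B}^n$ into the supremum defining $s_{\mathbb{B}^n}(x,y)$. The triangle inequality gives $|z_0-y|\le|z_0-x|+|x-y|=(1-|x|)+|x-y|$, so $|x-z_0|+|z_0-y|\le 2(1-|x|)+|x-y|$, and therefore
\[
s_{\mathbb{B}^n}(x,y)\ge\frac{|x-y|}{|x-z_0|+|z_0-y|}\ge\frac{|x-y|}{|x-y|+2(1-|x|)}=s_{\mathbb{B}^n}(x,y_r),
\]
as required. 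The whole argument comes from applying the triangle inequality to the natural test point on $\partial\mathbb{B}^n$ nearest to $x$ along the line through $0$ and $x$.

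The only delicate point is verifying in step one that the minimizer of $|x-z|+|z-y_r|$ on $\partial\mathbb{B}^n$ is $+x/|x|$ rather than $-x/|x|$ in the subcase $|x-y|>|x|$, where $y_r$ and $x$ lie on opposite sides of the origin. This is where the hypothesis $|x|>|y|$ is essential: it yields the strict inequality $|x-y|<2|x|$, which is precisely what is needed to break the tie in favour of $z_0=x/|x|$.
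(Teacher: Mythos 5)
Your argument is correct and reaches the conclusion by a genuinely different route from the paper. The paper proves $s_{\mathbb{B}^n}(x,y)\ge s_{\mathbb{B}^n}(x,y_r)$ by an ellipse comparison: since $y_r$ lies on the diameter through $x$ with $|x-y_r|=|x-y|$, the extremal inscribed ellipse with foci $x,y_r$ is larger than the one with foci $x,y$, so $\inf_{z\in\partial\mathbb{B}^n}\bigl(|x-z|+|z-y|\bigr)\le\inf_{z\in\partial\mathbb{B}^n}\bigl(|x-z|+|z-y_r|\bigr)$; the closed form is then read off from the collinear configuration. You instead obtain the inequality by a pure triangle-inequality computation at the explicit test point $z_0=x/|x|$: from $|z_0-y|\le|z_0-x|+|x-y|$ you get $s_{\mathbb{B}^n}(x,y)\ge |x-y|/\bigl(|x-y|+2(1-|x|)\bigr)$ directly, with no ellipse comparison at all. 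This is more elementary and more checkable than the paper's ``by geometric properties of the ellipse it is clear,'' and it isolates the only step that needs real care, namely the exact evaluation of $s_{\mathbb{B}^n}(x,y_r)$. On that step your appeal to rotational symmetry is slightly loose: symmetry only shows that $|x-z|+|z-y_r|$ restricted to $\partial\mathbb{B}^n$ depends on the polar angle, not that its minimum sits at one of the two poles. The clean justification is the tangency argument: the sublevel sets of $z\mapsto|x-z|+|z-y_r|$ are confocal ellipsoids centred at $\tfrac12(x+y_r)$ with major axis along $x/|x|$, and the farthest point of such an ellipsoid from the origin is its major vertex on the same side as the centre; since the centre has coordinate $\tfrac12\bigl(2|x|-|x-y|\bigr)>0$ along $x/|x|$ (here $|y|<|x|$ gives $|x-y|<2|x|$, exactly as you observe), the smallest such ellipsoid meeting the unit sphere touches it at $x/|x|$, which identifies the minimizer. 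With that detail supplied, both halves of your proof are sound and the lemma follows.
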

\begin{proof}
Note that $y_r\in[x,-x]$ and $|x-y|=|x-y_r|$. By geometric properties of the ellipse it is clear that $s_{\mathbb{B}^n}(x,y)\geq s_{\mathbb{B}^n}(x,y_r)$ and thus
\begin{eqnarray}
s_{\mathbb{B}^n}(x,y)&=& \sup_{z\in \partial G}\frac{|x-y|}{|x-z|+|z-y|}\nonumber\\
&\geq& s_{\mathbb{B}^n}(x,y_r)\nonumber\\
& =& \frac{|x-y|}{|x-y|+2(1-t)},~ t=\max \{ |x|, |y|\}.\nonumber
\end{eqnarray}
\qedhere
\end{proof}

\medskip

\begin{lem}\label{3.7}
For all $x, y\in\mathbb{B}^n$ we have
\begin{equation}
p_{\mathbb{B}^n}(x,y) \leq\tanh{\frac{\rho_{\mathbb{B}^n}(x,y)}{2} }\leq 2p_{\mathbb{B}^n}(x,y).
\end{equation}
\end{lem}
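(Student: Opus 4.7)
The plan is to prove both inequalities by direct algebraic manipulation, using the explicit formulas available for the two quantities. Since $d_{\mathbb{B}^n}(x) = 1-|x|$ and $d_{\mathbb{B}^n}(y) = 1-|y|$, the definition of $p$ gives
\[
p_{\mathbb{B}^n}(x,y) = \frac{|x-y|}{\sqrt{|x-y|^2 + 4(1-|x|)(1-|y|)}},
\]
while formula \eqref{tro} yields
\[
\tanh\frac{\rho_{\mathbb{B}^n}(x,y)}{2} = \frac{|x-y|}{\sqrt{|x-y|^2 + (1-|x|^2)(1-|y|^2)}}.
\]
Thus both quantities have the form $|x-y|/\sqrt{|x-y|^2+A}$, and comparing them amounts to comparing the two nonnegative quantities $A_p = 4(1-|x|)(1-|y|)$ and $A_\rho = (1-|x|^2)(1-|y|^2)$. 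Factoring gives
\[
A_\rho = (1-|x|)(1-|y|)(1+|x|)(1+|y|),
\]
so $A_p$ and $A_\rho$ differ by the factor $(1+|x|)(1+|y|) \in [1,4)$. This single observation drives everything.

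For the left inequality $p_{\mathbb{B}^n}(x,y) \le \tanh(\rho_{\mathbb{B}^n}(x,y)/2)$, I would note that it is equivalent to $A_\rho \le A_p$, which after cancelling $(1-|x|)(1-|y|)$ (the case $x=0$ or $y=0$ being trivial) reduces to $(1+|x|)(1+|y|) \le 4$, valid since $|x|,|y|<1$.

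For the right inequality $\tanh(\rho_{\mathbb{B}^n}(x,y)/2) \le 2 p_{\mathbb{B}^n}(x,y)$, squaring and cross-multiplying gives the equivalent statement
\[
|x-y|^2 + A_p \le 4\bigl(|x-y|^2 + A_\rho\bigr),
\]
that is, $A_p - 4 A_\rho \le 3|x-y|^2$. The left side equals
\[
4(1-|x|)(1-|y|)\bigl[1-(1+|x|)(1+|y|)\bigr],
\]
which is nonpositive because $(1+|x|)(1+|y|) \ge 1$, while the right side is nonnegative. Hence the inequality holds trivially.

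There is no real obstacle here: both inequalities follow from the elementary bounds $1 \le (1+|x|)(1+|y|) < 4$. The only thing to be slightly careful about is handling $x=y$ (both sides are $0$) and the degenerate cases $|x|=0$ or $|y|=0$ when cancelling common factors, but none of these require separate argument since the displayed inequalities hold identically in those cases.
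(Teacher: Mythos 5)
Your proof is correct, and for the second inequality it takes a genuinely different and more self-contained route than the paper. The paper proves $\tanh(\rho_{\mathbb{B}^n}(x,y)/2)\le 2p_{\mathbb{B}^n}(x,y)$ by chaining two other results: Theorem \ref{rk2} (the nontrivial geometric estimate $\tanh(\rho_{\mathbb{B}^n}/2)\le 2s_{\mathbb{B}^n}$) together with Lemma \ref{sp}(1) ($s_G\le p_G$ for convex $G$), whereas you verify the inequality directly by squaring and reducing it to $A_p-4A_\rho\le 3|x-y|^2$, whose left side is $4(1-|x|)(1-|y|)\bigl[1-(1+|x|)(1+|y|)\bigr]\le 0$. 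Your computation is valid and arguably preferable as a proof of this lemma in isolation, since it avoids importing the machinery of the $s$-metric; the paper's route costs nothing extra in context because Theorem \ref{rk2} is needed elsewhere anyway, and it records the sharper intermediate chain $\tanh(\rho/2)\le 2s\le 2p$. For the first inequality your argument is essentially identical to the paper's (the observation $(1-|x|^2)(1-|y|^2)\le 4(1-|x|)(1-|y|)$). One cosmetic remark: your parenthetical about the cases $x=0$ or $y=0$ when cancelling $(1-|x|)(1-|y|)$ is misplaced --- that factor is strictly positive for all $x,y\in\mathbb{B}^n$, so no degenerate case arises there at all; the only case needing a word is $x=y$, which you handle.
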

\begin{proof}
The second inequality follows from Lemma \ref{sp} and Theorem \ref{rk2}. For the first inequality clearly
\begin{eqnarray}
(1-|x|^2)(1-|y|^2) &=& (1-|x|)(1-|y|)(1+|x|)(1+|y|)      \nonumber \\
   &\leq & 4(1-|x|)(1-|y|) ,\nonumber
\end{eqnarray}
so
\begin{eqnarray*}
 \tanh{\frac{\rho_{\mathbb{B}^n}(x,y)}{2} }&=& \frac{|x-y|}{\sqrt{|x-y|^2+(1-|x|^2)(1-|y|^2)}}     \\
   &\geq & \frac{|x-y|}{\sqrt{|x-y|^2+4d(x)d(y)}}  \\
   &= & p_{\mathbb{B}^n}(x,y).
\end{eqnarray*}
\qedhere
\end{proof}

\begin{thm}
If $z\in G$, $0<\lambda<1$, $x,y\in\mathbb{B}^n(z,\lambda d(z))$, then
\begin{equation}\label{sj}
s_{\mathbb{B}^n(z,d(z))}(x,y)\leq C j_{\mathbb{B}^n(z,d(z))}(x,y),~ C=\frac{2(1-\lambda)}{1+2\lambda},
\end{equation}

\begin{equation}\label{js}
j_{\mathbb{B}^n(z,d(z))}(x,y)\leq \frac{2(1+\lambda)}{1-\lambda}s_{\mathbb{B}^n(z,d(z))}(x,y).
\end{equation}

\end{thm}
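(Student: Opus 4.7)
Put $R:=d(z)$, $D:=\mathbb{B}^n(z,R)$, $a:=|x-y|$, and $d:=\min\{d_D(x),d_D(y)\}$. The hypothesis $x,y\in\mathbb{B}^n(z,\lambda R)$, combined with the identity $d_D(w)=R-|w-z|$ valid for every $w\in D$, supplies the three bookkeeping estimates that drive the whole argument:
\[
a\le 2\lambda R,\qquad (1-\lambda)R\le d\le R,\qquad a+d\le(1+2\lambda)R.
\]

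The strategy is to sandwich each of $s_D(x,y)$ and $j_D(x,y)$ between explicit multiples of $a/R$ and then divide. For $s_D$ both sides come from the triangle inequality applied to $|x-z'|+|z'-y|$ for a generic $z'\in\partial D$. On one hand $|x-z'|+|z'-y|\ge d_D(x)+d_D(y)\ge 2(1-\lambda)R$, which after taking the supremum in the definition of $s_D$ gives $s_D(x,y)\le a/(2(1-\lambda)R)$ (this upper bound can also be read off from Lemma \ref{sp}(1), since $D$ is convex, followed by $p_D(x,y)\le a/(2\sqrt{d_D(x)d_D(y)})$). On the other hand $|x-z'|+|z'-y|\le (|x-z|+R)+(R+|z-y|)\le 2(1+\lambda)R$, giving $s_D(x,y)\ge a/(2(1+\lambda)R)$.

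For $j_D$ I would apply the two elementary scalar inequalities $\frac{t}{1+t}\le\log(1+t)\le t$ with $t=a/d$ and feed in the bookkeeping on $d$ and $a+d$:
\[
\frac{a}{(1+2\lambda)R}\le \frac{a}{a+d}\le j_D(x,y)\le \frac{a}{d}\le\frac{a}{(1-\lambda)R}.
\]
Dividing the upper bound for $j_D$ by the lower bound for $s_D$ cancels the common factor $a/R$ and leaves exactly the constant $2(1+\lambda)/(1-\lambda)$ promised in \eqref{js}. The cross-division for \eqref{sj}, the upper bound for $s_D$ against the lower bound for $j_D$, gives $s_D(x,y)/j_D(x,y)\le(1+2\lambda)/(2(1-\lambda))$, which is the form of the constant the sandwich naturally produces in the $s_D\le C\, j_D$ direction.

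No real analytical obstacle appears: the only substantive choice is which of the two scalar inequalities for $\log(1+t)$ to pair with which side of the triangle inequality in each direction, and at no point do I need information about the extremal boundary point $z'\in\partial D$ in the definition of $s_D$. The entire argument is one-dimensional bookkeeping in the radial variable $|\,\cdot-z|$ and the radius $R$.
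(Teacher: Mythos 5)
Your proof is correct and follows essentially the same route as the paper's: sandwich both $s_{\mathbb{B}^n(z,d(z))}$ and $j_{\mathbb{B}^n(z,d(z))}$ between explicit multiples of $|x-y|/d(z)$ via the triangle inequality and elementary bounds on $\log(1+t)$, then divide (the paper uses $\log(1+t)\ge 2t/(2+t)$ and a specific boundary point $w=z+d(z)\frac{y-z}{|y-z|}$, but this yields the same final constants as your version). You are also right that the constant the argument produces in the first inequality is $\frac{1+2\lambda}{2(1-\lambda)}$: the value $C=\frac{2(1-\lambda)}{1+2\lambda}$ printed in the statement (and echoed in the last sentence of the paper's own proof, whose computations nevertheless give the reciprocal) is in fact false for $\lambda>\frac12$, since $s(x,y)/j(x,y)\to\frac12$ as $|x-y|\to 0$ while $\frac{2(1-\lambda)}{1+2\lambda}<\frac12$ there; so this is a typo in the paper, not a gap in your argument.
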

\begin{proof}
From $x,y\in\mathbb{B}^n(z,\lambda d(z))$ it follows that
\begin{equation}\label{help}
\frac{|x-y|}{d(z)}\leq 2\lambda.
\end{equation}
 Because for all $x, y\in\mathbb{B}^n(z,\lambda d(z))$, $w\in\partial{G}$, the inequality
 $$|x-w|+|y-w|\geq 2(1-\lambda)d(z),$$ holds, we see that
$$s_{\mathbb{B}^n(z,d(z))}(x,y)\leq \frac{|x-y|}{2(1-\lambda)d(z)},$$
and by $\log(1+t)\geq \frac{2t}{2+t},$ for $t\geq 0$, and \eqref{help} we see that
\begin{eqnarray}
j_{\mathbb{B}^n(z,d(z))}(x,y) &\geq & \log\left(1+\frac{|x-y|}{(1+\lambda)(d(z))}\right)\nonumber \\
&\geq&\frac{\frac{2|x-y|}{(1+\lambda)(d(z))}}{2+\frac{|x-y|}{(1+\lambda)(d(z))}} \nonumber \\
&\geq& \frac{|x-y|}{(1+2\lambda)(d(z))}. \nonumber
\end{eqnarray}
Hence it suffices to choose $C=\frac{2(1-\lambda)}{1+2\lambda}.$

\medskip

For the second part observing that for $w\in\mathbb{B}^n(z,\lambda d(z))\,,$ $d(w)\geq (1-\lambda)d(z)$ we have
\begin{eqnarray}
j_{\mathbb{B}^n(z,d(z))}(x,y)&\leq& \log\left(1+\frac{|x-y|}{(1-\lambda)(d(z))}\right)\nonumber \\
&\leq & \frac{|x-y|}{(1-\lambda)(d(z))}.\nonumber
\end{eqnarray}
On the other hand, setting $w=z+d(z)\frac{y-z}{|y-z|}$ we see that
\begin{eqnarray*}
|x-w|+|y-w| &\leq & |x-y|+|y-w|+|y-w|\\
&\leq & |x-y|+2d(z)\leq 2(1+\lambda)d(z)
\end{eqnarray*}
and hence
\[
s_{\mathbb{B}^n(z,d(z))}(x,y)\geq  \frac{|x-y|}{2(1+\lambda)d(z)}.
\]
Now it suffices to find $C$ such that $$\frac{|x-y|}{2(1+\lambda)d(z)}\geq C\frac{|x-y|}{(1-\lambda)(d(z))},$$ so we may choose $C=\frac{2(1+\lambda)}{1-\lambda}$, and the proof is complete.
\end{proof}

\begin{thm}\label{jp}
If $z\in G$, $0<\lambda<1$, $x,y\in\mathbb{B}^n(z,\lambda d(z))$, then
$$j_G(x,y)\leq C p_G(x,y),~ C= \frac{2}{1-\lambda}.$$
\end{thm}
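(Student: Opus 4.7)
The plan is to bound $j_G$ and $p_G$ against the same simple expression in $|x-y|$ and $d_G(x)$. The first step is to record the two standard consequences of the hypothesis $x,y\in \mathbb{B}^n(z,\lambda d(z))$: namely $|x-y|\le 2\lambda d(z)$ and
\[
(1-\lambda)d(z)\le d_G(x),\, d_G(y)\le (1+\lambda)d(z),
\]
the latter because $d_G$ is $1$-Lipschitz with respect to Euclidean distance. In particular, writing $m=\min\{d_G(x),d_G(y)\}$, we have $m\ge (1-\lambda)d(z)$, hence $|x-y|/m\le 2\lambda/(1-\lambda)$.

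The crucial step is to get a clean lower bound on $p_G(x,y)$. Assume without loss of generality that $m=d_G(x)$. By the Lipschitz property of $d_G$,
\[
d_G(y)\le d_G(x)+|x-y|,
\]
so
\[
|x-y|^{2}+4d_G(x)d_G(y)\le |x-y|^{2}+4d_G(x)^{2}+4d_G(x)|x-y|=\bigl(|x-y|+2d_G(x)\bigr)^{2}.
\]
Taking square roots and reciprocating gives the key inequality
\[
p_G(x,y)\ge \frac{|x-y|}{|x-y|+2\,m}.
\]
On the $j$-side, I would use only the elementary bound $\log(1+t)\le t$ for $t\ge 0$, yielding $j_G(x,y)\le |x-y|/m$.

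Combining these two estimates,
\[
\frac{j_G(x,y)}{p_G(x,y)}\le \frac{|x-y|}{m}\cdot\frac{|x-y|+2m}{|x-y|}=\frac{|x-y|}{m}+2\le \frac{2\lambda}{1-\lambda}+2=\frac{2}{1-\lambda},
\]
which is exactly the claim. I expect the main obstacle to be identifying the right lower bound for $p_G$: using the crude uniform estimate $d_G(x)d_G(y)\le (1+\lambda)^{2}d(z)^{2}$ yields the weaker constant $2\sqrt{\lambda^{2}+(1+\lambda)^{2}}/(1-\lambda)$, so the trick is to exploit the Lipschitz relation between $d_G(x)$ and $d_G(y)$ to complete the square under the radical. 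Everything else is a one-line application of $\log(1+t)\le t$ and the hypothesis on $|x-y|/d(z)$.
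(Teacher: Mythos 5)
Your proof is correct and reaches the sharp constant $C=\tfrac{2}{1-\lambda}$. The overall strategy coincides with the paper's: both arguments bound $j_G(x,y)\le |x-y|/\min\{d(x),d(y)\}$ via $\log(1+t)\le t$ and then produce a matching lower bound for $p_G$. The difference lies in how that lower bound is obtained. The paper substitutes the localization estimates directly under the radical, using $|x-y|\le \tfrac{2\lambda}{1-\lambda}d(x)$ and $d(y)\le \tfrac{1+\lambda}{1-\lambda}d(x)$, and relies on the algebraic cancellation $\lambda^2+(1+\lambda)(1-\lambda)=1$ to arrive at $p_G(x,y)\ge \tfrac{1-\lambda}{2}\cdot\tfrac{|x-y|}{d(x)}$. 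You instead first prove the hypothesis-free inequality
\[
p_G(x,y)\ \ge\ \frac{|x-y|}{|x-y|+2\min\{d(x),d(y)\}},
\]
by completing the square with the Lipschitz bound $d(y)\le d(x)+|x-y|$, and only afterwards insert $|x-y|/\min\{d(x),d(y)\}\le 2\lambda/(1-\lambda)$. Your route has the advantage of isolating a clean, globally valid estimate (it shows $j_G/p_G\le 2+|x-y|/\min\{d(x),d(y)\}$ in any proper subdomain, with the localization hypothesis entering only at the very last step), and it is essentially the same device the paper uses elsewhere for the $s$-metric in the proof of Lemma~\ref{sp}(2). Your observation that the cruder bound $d(x)d(y)\le(1+\lambda)^2 d(z)^2$ would only give the weaker constant $2\sqrt{\lambda^2+(1+\lambda)^2}/(1-\lambda)$ is also accurate; both your completion of the square and the paper's ratio bound $d(y)/d(x)\le\tfrac{1+\lambda}{1-\lambda}$ are ways of avoiding that loss.
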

\begin{proof}
By symmetry we may assume that $d(x)\leq d(y)$. Then by $\log(1+t)\leq t,~ t>0$ we have
$$j_G(x,y)\leq\frac{|x-y|}{\min\{d(x),d(y)\}}=\frac{|x-y|}{d(x)}.$$
On the other hand by the assumption we get $d(z)\leq\frac{1}{1-\lambda}\min\{d(x),d(y)\},$ and $$\frac{1-\lambda}{1+\lambda}\leq\frac{d(x)}{d(y)}\leq\frac{1+\lambda}{1-\lambda},$$
\begin{eqnarray}
p_G(x,y)&=& \frac{|x-y|}{\sqrt{|x-y|^2+4 d(x)d(y)}}\nonumber \\
&\geq & \frac{|x-y|}{\sqrt{\left(2\lambda\frac{d(x)}{1-\lambda}\right)^2+4d(x)\frac{1+\lambda}{1-\lambda}d(x)}},\nonumber\\
&\geq & \frac{1-\lambda}{2}.\frac{|x-y|}{d(x)}.\nonumber
\end{eqnarray}
We see that
$$j_G(x,y)\leq\frac{|x-y|}{d(x)}\leq C\frac{1-\lambda}{2}.\frac{|x-y|}{2}\leq Cp_G(x,y),$$
holds if
$C\geq\frac{2}{1-\lambda},$ and the proof is complete.
\end{proof}

\begin{thm}\label{pj}
If $x, y\in G\subset\mathbb{R}^n$, then
$$p_G(x,y)\leq \frac{1}{\sqrt{2}} j_G(x,y).$$
\end{thm}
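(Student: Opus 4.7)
The plan is to reduce the inequality to an elementary algebraic identity by combining a simple monotonicity bound on $p_G$ with the standard lower bound $\log(1+t)\ge 2t/(2+t)$ for $t\ge 0$ already used earlier in the section (see the proof of \eqref{sj}).

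First, by swapping $x$ and $y$ if necessary, I assume $d(x)\le d(y)$. Then the $j$-metric simplifies to $j_G(x,y)=\log\bigl(1+|x-y|/d(x)\bigr)$, and since the function $d(y)\mapsto p_G(x,y)$ is decreasing, the inequality $d(y)\ge d(x)$ gives
\[
p_G(x,y)\le \frac{|x-y|}{\sqrt{|x-y|^2+4\,d(x)^2}}.
\]

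Next, I would apply $\log(1+t)\ge 2t/(2+t)$ with $t=|x-y|/d(x)$ to obtain
\[
j_G(x,y)\ge \frac{2|x-y|}{2d(x)+|x-y|}.
\]
Combining the two estimates, the desired inequality $p_G\le j_G/\sqrt{2}$ reduces to
\[
\bigl(2d(x)+|x-y|\bigr)^2\le 2\bigl(|x-y|^2+4d(x)^2\bigr),
\]
which expands to $\bigl(|x-y|-2d(x)\bigr)^2\ge 0$, an unconditional identity.

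I do not anticipate any genuine obstacle here. The only items worth verifying carefully are the direction of monotonicity of $p_G$ in the larger of the two boundary distances, which is immediate from the definition, and the sharpness of the logarithmic bound, which agrees with $\log(1+t)$ to second order at $t=0$; both auxiliary estimates are tight enough to preserve the constant $1/\sqrt{2}$.
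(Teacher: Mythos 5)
Your proposal is correct and follows essentially the same route as the paper: assume $d(x)\le d(y)$, bound $p_G$ above by $|x-y|/\sqrt{|x-y|^2+4d(x)^2}$, bound $j_G$ below via $\log(1+t)\ge 2t/(2+t)$, and reduce to an elementary inequality. Your final step merely makes explicit the ``easy calculation'' the paper leaves to the reader, identifying it as $\bigl(|x-y|-2d(x)\bigr)^2\ge 0$.
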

\begin{proof}
Fix $x, y\in G$. By relabeling the points we may assume that $d(x)\leq d(y)$. Then
\[
p_G(x,y)\leq \frac{|x-y|}{\sqrt{|x-y|^2+4 d(x)^2}},
\]
and
\[
j_G(x,y)=\log\left(1+\frac{|x-y|}{d(x)}\right).
\]
Write $t=|x-y|/d(x)$ and observe that
\[
j_G(x,y)=\log(1+t)\geq \frac{2t}{2+t}
\]
\[
p_G(x,y)\leq \frac{t}{\sqrt{t^2+4}}
\]
It is enough to find a constant $C$ such that
\[
\frac{2t}{2+t}\geq C \frac{t}{\sqrt{t^2+4}}
\]
for all $t\geq 0$. Easy calculation shows that we can choose $C=\sqrt{2}$.
\end{proof}
\begin{cor}
If $x, y\in G\subset\mathbb{R}^n$, and $G$ is convex, then
\[
s_G(x,y)\leq \frac{1}{\sqrt{2}} j_G(x,y).
\]
\end{cor}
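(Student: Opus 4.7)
The plan is to obtain this inequality essentially for free by chaining two results that are already in hand. The key observation is that the chain
\[
s_G(x,y) \;\leq\; p_G(x,y) \;\leq\; \frac{1}{\sqrt{2}}\, j_G(x,y)
\]
would immediately give the conclusion, and both inequalities have already been established (the first only conditionally).

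First I would invoke Lemma \ref{sp}(1), which gives $s_G(x,y)\leq p_G(x,y)$ precisely under the convexity hypothesis on $G$. This is the step that uses convexity in an essential way: the proof of that lemma draws the tangent line $L$ through an extremal boundary point $z$, and convexity of $G$ is what guarantees $L$ lies outside $G$, which in turn gives $d_G(x)\leq d(x,L)$ and $d_G(y)\leq d(y,L)$. Without convexity the inequality $s_G\leq p_G$ can fail, as the remark following Lemma \ref{sp} illustrates with $G=\mathbb{R}^2\setminus\{0\}$.

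Then I would apply Theorem \ref{pj}, which provides the unconditional bound $p_G(x,y)\leq \tfrac{1}{\sqrt 2}\, j_G(x,y)$ for arbitrary proper subdomains $G\subset \mathbb{R}^n$. Combining the two inequalities gives the claim. There is no real obstacle here; the corollary is a one-line consequence, and the only subtlety worth flagging is that convexity is used once (through Lemma \ref{sp}(1)) and cannot be dropped, since the second step is sharp for convex $G$ but the first step genuinely requires convexity.
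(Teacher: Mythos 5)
Your proof is correct and is exactly the argument the paper gives: the corollary is stated there as an immediate consequence of Lemma \ref{sp}(1) and Theorem \ref{pj}. Your additional remarks about where convexity enters and why it cannot be dropped are accurate and consistent with the paper's discussion.
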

\begin{proof}
It follows from Lemma \ref{sp} (1) and Theorem \ref{pj}.
\end{proof}
{\bf Proof of Theorem \ref{mainA}}
  The result follows from Theorems \ref{riku} and \ref{pj}. $\square$

\begin{thm}\label{jv2}
\begin{enumerate}
\item
For $x, y\in\mathbb{B}^2$ we have $$v_{\mathbb{B}^2}(x,y)\leq 2 j_{\mathbb{B}^2}(x,y).$$

\item
If $\lambda\in (0,1)$ and $x, y\in\mathbb{B}^2(\lambda)$ then $$\frac{3(1-\lambda^2)}{2(3+\lambda^2)}j_{\mathbb{B}^2}(x,y)\leq v_{\mathbb{B}^2}(x,y).$$
\end{enumerate}
\end{thm}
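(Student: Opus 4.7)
My plan for Part (1) is to bound $\measuredangle(x,z,y)$ for an arbitrary $z\in\partial\mathbb{B}^2$ by exploiting the tangent line $L$ to $\partial\mathbb{B}^2$ at $z$. Since $\mathbb{B}^2$ is convex and lies on one side of $L$, the reflection $y^*$ of $y$ across $L$ satisfies $|z-y|=|z-y^*|$, so the triangle inequality gives $|x-z|+|z-y|\ge|x-y^*|$, and a coordinate computation yields $|x-y^*|^2=|x-y|^2+4\,d_L(x)\,d_L(y)$ (with $d_L(\cdot)$ the distance to $L$). Feeding this into the half-angle identity
\[
\tan^2(\alpha/2)=\frac{|x-y|^2-(|x-z|-|z-y|)^2}{(|x-z|+|z-y|)^2-|x-y|^2},
\]
dropping the squared difference in the numerator, and using $d_L\ge d$ because $L$ supports $\mathbb{B}^2$, I obtain $\tan(v_{\mathbb{B}^2}(x,y)/2)\le|x-y|/(2\sqrt{d(x)d(y)})$. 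Writing $u=|x-y|/(2\sqrt{d(x)d(y)})$ and $t=|x-y|/\min(d(x),d(y))$, the estimate $\sqrt{d(x)d(y)}\ge\min(d(x),d(y))$ gives $u\le t/2$, and the proof closes with the elementary inequality $\arctan(u)\le\log(1+2u)$ for $u\ge 0$ (whose verification reduces to $2u^2-2u+1\ge 0$ after differentiating), yielding $v\le 2\log(1+t)=2j$.

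For Part (2) I will exhibit a specific boundary point to get a lower bound on $v$. Set $m=(x+y)/2$, $d=|x-y|/2$, and let $\vec n$ be a unit vector perpendicular to $x-y$. The perpendicular bisector of $[x,y]$ meets $\partial\mathbb{B}^2$ at two points $m+t\vec n$; choosing the one closer to $m$ and rationalising gives
\[
|m-z|=\frac{1-|m|^2}{\sqrt{a^2+1-|m|^2}+|a|}\le\sqrt{1-|m|^2},\qquad a=m\cdot\vec n.
\]
Since triangle $xzy$ is isosceles with altitude $|m-z|$, one has $\tan(\measuredangle(x,z,y)/2)=d/|m-z|\ge d/\sqrt{1-|m|^2}$. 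Applying the Pad\'e-type lower bound $\arctan(u)\ge 3u/(3+u^2)$ for $u\ge 0$ (which follows from a short derivative computation yielding $4u^4/[(1+u^2)(3+u^2)^2]\ge 0$), I get
\[
v_{\mathbb{B}^2}(x,y)\ge\frac{6d\sqrt{1-|m|^2}}{3(1-|m|^2)+d^2}.
\]
Combining with $j_{\mathbb{B}^2}(x,y)\le 2d/(1-\lambda)$ (from $\min(d(x),d(y))\ge 1-\lambda$ and $\log(1+t)\le t$) yields
\[
\frac{v_{\mathbb{B}^2}(x,y)}{j_{\mathbb{B}^2}(x,y)}\ge\frac{3(1-\lambda)\sqrt{1-|m|^2}}{3(1-|m|^2)+d^2}.
\]

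To finish I minimise this bound over admissible $(|m|,d)$. The parallelogram identity $|m|^2+d^2=(|x|^2+|y|^2)/2\le\lambda^2$ provides the constraint. For fixed $|m|$ the fraction decreases in $d$, so the minimum is attained on the boundary of the constraint; substituting $d^2=\lambda^2-|m|^2$ and writing $M=|m|^2$, the resulting function $\sqrt{1-M}/(3+\lambda^2-4M)$ of $M\in[0,\lambda^2]$ is monotone increasing (its derivative has sign $5-\lambda^2-4M>0$ on this interval), hence attains its minimum at $M=0$, giving $3(1-\lambda)/(3+\lambda^2)$. The elementary observation $1\ge(1+\lambda)/2$ for $\lambda\in(0,1)$ then gives $3(1-\lambda)/(3+\lambda^2)\ge 3(1-\lambda^2)/(2(3+\lambda^2))$, establishing the claimed bound.

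The key technical challenge will be selecting the right analytic inequalities for $\arctan$: in Part (2), the Pad\'e lower bound $\arctan(u)\ge 3u/(3+u^2)$ is essential because cruder estimates like $\arctan(u)\ge u/(1+u)$ give a constant of the wrong order. A secondary pitfall is to use the parallelogram identity $|m|^2+d^2\le\lambda^2$ rather than the separate crude bounds $|m|\le\lambda$ and $d\le\lambda$, which would ruin the monotonicity argument in the final minimisation.
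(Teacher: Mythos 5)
Your argument is correct, and it is genuinely different from the paper's. The paper proves (1) by citing the comparison $v_{\mathbb{B}^2}\le\rho_{\mathbb{B}^2}$ from [KLVW, 3.12] together with $\rho\le 2j$ (Lemma \ref{10}), and proves (2) by combining $\rho^*_{\mathbb{B}^2}\le v_{\mathbb{B}^2}$ from [KLVW, 3.15], where $\rho^*=\arctan(\sinh(\rho/2))$, with the bound $\sinh(\rho/2)\le\sinh(j)\le 2\lambda/(1-\lambda^2)$ and the Shafer--Fink type inequality $\arctan t>3t/(1+2\sqrt{1+t^2})$ from [DC]. You instead work directly from the definition of $v$: in (1) the supporting-line reflection plus the half-angle identity gives the clean intermediate estimate $v_{\mathbb{B}^2}(x,y)\le 2\arctan\bigl(|x-y|/(2\sqrt{d(x)d(y)})\bigr)$ (an argument that in fact works in any convex domain), closed by the elementary inequality $\arctan u\le\log(1+2u)$; in (2) the perpendicular-bisector boundary point and the Pad\'e bound $\arctan u\ge 3u/(3+u^2)$, together with the parallelogram-law constraint $|m|^2+d^2\le\lambda^2$, give the constant $3(1-\lambda)/(3+\lambda^2)$, which is strictly larger than the stated $3(1-\lambda^2)/(2(3+\lambda^2))$ by the factor $2/(1+\lambda)$. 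What your route buys is self-containedness (no appeal to the $v$ versus $\rho$, $\rho^*$ comparisons of [KLVW] or to [DC]) and a sharper constant in (2); what the paper's route buys is brevity and the reuse of known machinery. All the individual steps you outline (the identity $|x-y^*|^2=|x-y|^2+4d_L(x)d_L(y)$, the sign analyses of the two $\arctan$ inequalities, the monotonicity of $\sqrt{1-M}/(3+\lambda^2-4M)$) check out.
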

\begin{proof}

(1) By \cite[3.12]{klvw} we have $v_{\mathbb{B}^2}(x,y)\leq \rho_{\mathbb{B}^2}(x,y)\,.$ Now the proof follows by Lemma \ref{10}.

(2) By Lemma \ref{10} $$\sinh{\frac{\rho_{\mathbb{B}^2}(x,y)}{2}}\leq\sinh{j_{\mathbb{B}^2}(x,y)}\leq \sinh\left(\log\left(1+\frac{2\lambda}{1-\lambda}\right)\right)=\frac{2\lambda}{1-\lambda^2},$$
and by \cite[3.15]{klvw} $\rho_{\mathbb{B}^2}^{*}\leq v_{\mathbb{B}^2}\leq 2\rho_{\mathbb{B}^2}^{*},$ where
$$\rho_{\mathbb{B}^2}^{*}(x,y)=\arctan\left(\sinh{\frac{\rho_{\mathbb{B}^2}(x,y)}{2}}\right).$$
Next by \cite[1.8]{dc} $$\frac{3t}{1+2\sqrt{1+t^2}}<\arctan{t}<\frac{2t}{1+\sqrt{1+t^2}},$$
for $t>0$. We further obtain
\begin{eqnarray}
\rho_{\mathbb{B}^2}^{*}(x,y)&=&\arctan\left(\sinh{\frac{\rho_{\mathbb{B}^2}(x,y)}{2}}\right)\nonumber \\
&\geq& \frac{3\sinh{\frac{\rho_{\mathbb{B}^2}(x,y)}{2}}}{1+2\sqrt{1+\sinh^{2}{\frac{\rho_{\mathbb{B}^2}(x,y)}{2}}}}\nonumber \\
&\geq& \frac{3\sinh{\frac{j_{\mathbb{B}^2}(x,y)}{2}}}{1+2\sqrt{1+\left(\frac{2\lambda}{1-\lambda^2}\right)^2}}\nonumber \\
&=& \frac{3(1-\lambda^2)}{3+\lambda^2}\sinh{\frac{j_{\mathbb{B}^2}(x,y)}{2}}\nonumber \\
&\geq& \frac{3(1-\lambda^2)}{2(3+\lambda^2)}j_{\mathbb{B}^2}(x,y).\nonumber
\end{eqnarray}
Thus $$\frac{3(1-\lambda^2)}{2(3+\lambda^2)}j_{\mathbb{B}^2}(x,y)\leq v_{\mathbb{B}^2}(x,y).$$
\qedhere
\end{proof}

\begin{thm}
If $z\in G$, $\lambda\in (0,1)$ then for $x, y\in\mathbb{B}^n(z,\lambda d(z)),$
\begin{equation}\label{31m}
s_G(x,y)\leq\left(\frac{1+\lambda}{1-\lambda}\right) p_G(x,y).
\end{equation}

\end{thm}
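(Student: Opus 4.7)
The strategy is to transfer the problem from $G$ onto the inner ball $B:=\mathbb{B}^n(z,d(z))$, on which $s$ and $p$ are easy to compare, and then convert the distance function of $B$ back to the distance function of $G$ at the cost of the factor $(1+\lambda)/(1-\lambda)$. Since $B\subset G$, domain monotonicity (property (a) of Section 3) gives $s_G(x,y)\le s_B(x,y)$, and because $B$ is convex, Lemma \ref{sp}(1) yields $s_B(x,y)\le p_B(x,y)$. The remaining task is therefore to show $p_B(x,y)\le\tfrac{1+\lambda}{1-\lambda}\,p_G(x,y)$.

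For this I would begin by establishing a two-sided bound on the relevant distances. For $w\in\mathbb{B}^n(z,\lambda d(z))$ the triangle inequality gives
\[
(1-\lambda)\,d(z)\ \le\ d(z)-|w-z|\ =\ d_B(w)\ \le\ d_G(w)\ \le\ d(z)+|w-z|\ \le\ (1+\lambda)\,d(z),
\]
where $d_B(w)\le d_G(w)$ uses $B\subset G$. Applying this to both $w=x$ and $w=y$ and writing $c:=(1+\lambda)/(1-\lambda)\ge 1$, I get $d_G(x)\le c\,d_B(x)$ and $d_G(y)\le c\,d_B(y)$, and hence
\[
d_G(x)\,d_G(y)\ \le\ c^{2}\,d_B(x)\,d_B(y).
\]

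Now I would compare denominators in the definition of $p$. Since $c\ge 1$,
\[
|x-y|^{2}+4\,d_G(x)d_G(y)\ \le\ c^{2}|x-y|^{2}+4c^{2}\,d_B(x)d_B(y)\ =\ c^{2}\bigl(|x-y|^{2}+4\,d_B(x)d_B(y)\bigr).
\]
Taking square roots and reciprocating gives $p_B(x,y)\le c\,p_G(x,y)$. Chaining the three inequalities yields
\[
s_G(x,y)\ \le\ s_B(x,y)\ \le\ p_B(x,y)\ \le\ \frac{1+\lambda}{1-\lambda}\,p_G(x,y),
\]
which is \eqref{31m}.

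The proof has no real obstacle: the two conceptual inputs are domain monotonicity of $s$ and Lemma \ref{sp}(1), both already established. The only point that requires care is the observation $d_B(w)\le d_G(w)$, which would fail if we replaced $B$ by an outer ball; working with the inner ball $\mathbb{B}^n(z,d(z))$ is precisely what makes the comparison between $d_B$ and $d_G$ a one-sided inflation by the factor $c$.
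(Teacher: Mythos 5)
Your proposal is correct and follows essentially the same route as the paper: pass to the inner ball $\mathbb{B}^n(z,d(z))$ by domain monotonicity of $s$, apply Lemma \ref{sp}(1) on that convex ball, and then convert back to $p_G$ using the sandwich $(1-\lambda)d(z)\le d_G(\cdot)\le(1+\lambda)d(z)$. The only cosmetic difference is that you compare $d_B$ to $d_G$ directly via the factor $c$, whereas the paper routes the comparison through the explicit intermediate bound $|x-y|/\sqrt{|x-y|^2+4(1\mp\lambda)^2 d(z)^2}$; the content is identical.
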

\begin{proof}
By monotonicity of $s$-metric and Lemma \ref{sp} (1)
$$s_G(x,y)\leq s_{\mathbb{B}^n(z,d(z))}(x,y)\leq p_{\mathbb{B}^n(z,d(z))}(x,y)\leq \frac{|x-y|}{\sqrt{|x-y|^2+4(1-\lambda)^2 d(z)^2}}.$$
If $x,y\in \mathbb{B}^n(z,\lambda d(z))$, we easily see that
\begin{equation}\label{ine1}
(1-\lambda)d(z)\leq d_G(x)\leq (1+\lambda)d(z).
\end{equation}
Now if we choose $c=\left(\frac{1+\lambda}{1-\lambda}\right)$, then
\[
\frac{|x-y|}{\sqrt{|x-y|^2+4(1-\lambda)^2 d(z)^2}}\leq \frac{c|x-y|}{\sqrt{|x-y|^2+4(1+\lambda)^2 d(z)^2}}\leq c p_G(x,y).\qedhere
\]
\end{proof}

\begin{thm}\label{sv}
Let $0<\lambda<1$, $x,y\in\mathbb{B}^2(\lambda )$. Then
\begin{enumerate}
\item
$$s_{\mathbb{B}^2}(x,y)\leq \frac{4(3+\lambda^2)}{3(1+2\lambda)(1+\lambda)}v_{\mathbb{B}^2}(x,y),$$
\item
$$v_{\mathbb{B}^2}(x,y)\leq \frac{4(1+\lambda)}{1-\lambda}s_{\mathbb{B}^2}(x,y).$$
\end{enumerate}
\end{thm}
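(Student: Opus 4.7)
The plan is to derive both inequalities by chaining together bounds that are already available in the paper, specializing the auxiliary theorems with $z=0$ and $d(z)=1$ so that $\mathbb{B}^n(z,d(z))$ becomes $\mathbb{B}^2$ and the hypothesis $x,y\in\mathbb{B}^n(z,\lambda d(z))$ becomes precisely $x,y\in\mathbb{B}^2(\lambda)$. No new geometric input should be needed; the argument is a bookkeeping exercise that passes through the $j$-metric as an intermediate quantity.

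For part (1), the natural route is to estimate $s_{\mathbb{B}^2}\lesssim j_{\mathbb{B}^2}\lesssim v_{\mathbb{B}^2}$. Specifically, with $z=0$, estimate \eqref{sj} gives
\[
s_{\mathbb{B}^2}(x,y)\leq \frac{2(1-\lambda)}{1+2\lambda}\,j_{\mathbb{B}^2}(x,y),
\]
and Theorem \ref{jv2}(2), rewritten, yields
\[
j_{\mathbb{B}^2}(x,y)\leq \frac{2(3+\lambda^2)}{3(1-\lambda^2)}\,v_{\mathbb{B}^2}(x,y).
\]
Multiplying these and using $1-\lambda^2=(1-\lambda)(1+\lambda)$ to cancel the factor $1-\lambda$ produces exactly the constant $\frac{4(3+\lambda^2)}{3(1+2\lambda)(1+\lambda)}$ claimed in (1).

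For part (2), the reverse chain $v_{\mathbb{B}^2}\lesssim j_{\mathbb{B}^2}\lesssim s_{\mathbb{B}^2}$ is even more direct. Theorem \ref{jv2}(1) gives $v_{\mathbb{B}^2}(x,y)\leq 2j_{\mathbb{B}^2}(x,y)$ on all of $\mathbb{B}^2$ (no $\lambda$-restriction is used at this step), and inequality \eqref{js} with $z=0$ gives
\[
j_{\mathbb{B}^2}(x,y)\leq \frac{2(1+\lambda)}{1-\lambda}\,s_{\mathbb{B}^2}(x,y),
\]
and multiplying the two bounds yields the constant $\frac{4(1+\lambda)}{1-\lambda}$ in (2).

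I do not anticipate a significant obstacle here: every ingredient has already been established in the preceding part of Section 3, and the only point worth checking is the algebraic simplification in (1), where the factor $(1-\lambda)$ in the numerator of the $s\leq C j$ bound cancels against the factor $(1-\lambda)$ hidden in $1-\lambda^2$ appearing in the denominator of the $j\leq C' v$ bound. One should also verify that the specialization to $z=0$, $d(z)=1$ is legitimate, which is immediate since $\mathbb{B}^n(0,1)=\mathbb{B}^2$ and $\mathbb{B}^n(0,\lambda\cdot 1)=\mathbb{B}^2(\lambda)$, matching the hypotheses of \eqref{sj}, \eqref{js}, and Theorem \ref{jv2} exactly.
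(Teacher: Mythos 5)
Your proposal is correct and follows essentially the same route as the paper: the published proof of part (1) cites Theorem \ref{jv2} together with \eqref{sj}, and of part (2) cites Theorem \ref{jv2} together with \eqref{js}, i.e.\ exactly the chain through the $j$-metric that you describe. Your version merely spells out the specialization $z=0$, $d(z)=1$ and the cancellation of $(1-\lambda)$ against $1-\lambda^2$, which the paper leaves implicit.
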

\begin{proof}

(1) By Theorem \ref{jv2} and \eqref{sj}, $$s_{\mathbb{B}^2}(x,y)\leq \frac{4(3+\lambda^2)}{3(1+2\lambda)(1+\lambda)}v_{\mathbb{B}^2}(x,y). $$

(2) By Theorem \ref{jv2} and \eqref{js},
\[
  v_{\mathbb{B}^2}(x,y)\leq 2j_{\mathbb{B}^2}(x,y)\leq \frac{4(1+\lambda)}{1-\lambda}s_{\mathbb{B}^2}(x,y).\qedhere
\]

\end{proof}

\begin{thm}\label{pv}
\begin{enumerate}
\item
If $\lambda\in (0,1)$ and $x, y\in\mathbb{B}^2(\lambda)$ then $$v_{\mathbb{B}^2}(x,y)\leq \frac{4(1+\lambda)}{(1-\lambda)} p_{\mathbb{B}^2}(x,y).$$

\item
If $x, y\in\mathbb{B}^2$ with $v_{\mathbb{B}^2}(x,y)\in (0,\pi/2),$ then $$p_{\mathbb{B}^2}(x,y)\leq  v_{\mathbb{B}^2}(x,y),$$
\end{enumerate}

\end{thm}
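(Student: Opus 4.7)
My plan is to prove each part by assembling results already developed in this section rather than starting from scratch.

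For part (1) I would argue in one step. Since $\mathbb{B}^2$ is convex, Lemma \ref{sp}(1) gives $s_{\mathbb{B}^2}(x,y) \le p_{\mathbb{B}^2}(x,y)$, and Theorem \ref{sv}(2) already provides $v_{\mathbb{B}^2}(x,y)\le \frac{4(1+\lambda)}{1-\lambda}s_{\mathbb{B}^2}(x,y)$ on $\mathbb{B}^2(\lambda)$. Composing these two inequalities yields the claim immediately, and there is no real obstacle.

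For part (2) my plan is to route through the hyperbolic metric $\rho=\rho_{\mathbb{B}^2}$ via the chain
\[
p_{\mathbb{B}^2}(x,y) \;\le\; \tanh\frac{\rho(x,y)}{2} \;\le\; \arctan\sinh\frac{\rho(x,y)}{2} \;=\; \rho^*_{\mathbb{B}^2}(x,y) \;\le\; v_{\mathbb{B}^2}(x,y).
\]
The first inequality is Lemma \ref{3.7}, and the last inequality is the bound $\rho^*_{\mathbb{B}^2}\le v_{\mathbb{B}^2}$ from \cite[3.15]{klvw} which was already quoted and used in the proof of Theorem \ref{jv2}(2). The only link not already recorded is the scalar inequality
\[
\tanh t \le \arctan\sinh t, \qquad t\ge 0,
\]
and this is where I expect to spend essentially all the work; everything else is quotation.

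To establish that scalar inequality I would set $f(t)=\arctan\sinh t-\tanh t$, observe $f(0)=0$, and compute
\[
f'(t)=\frac{\cosh t}{1+\sinh^2 t}-\frac{1}{\cosh^2 t}=\frac{\cosh t - 1}{\cosh^2 t}\ge 0,
\]
so $f$ is monotone nondecreasing on $[0,\infty)$ and thus nonnegative. Substituting $t=\rho(x,y)/2$ closes the chain and gives $p_{\mathbb{B}^2}(x,y)\le v_{\mathbb{B}^2}(x,y)$. I would also add a short remark that the hypothesis $v_{\mathbb{B}^2}(x,y)\in(0,\pi/2)$ is not truly used by the argument above (the case $v_{\mathbb{B}^2}(x,y)\ge \pi/2$ is trivial because $p_{\mathbb{B}^2}<1<\pi/2$); the restriction simply delimits the nontrivial range of the statement.
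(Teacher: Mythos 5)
Your proposal is correct and follows essentially the same route as the paper: part (1) is the identical composition of Theorem \ref{sv}(2) with Lemma \ref{sp}(1), and part (2) uses the same chain through Lemma \ref{3.7} and the bound $\rho^*_{\mathbb{B}^2}\le v_{\mathbb{B}^2}$ from \cite[3.15]{klvw}. The only (harmless) difference is in the final elementary step of (2): the paper inverts $\rho^*\le v$ via $\tan$ to reach $\tanh(\rho/2)\le\sin v\le v$, which is where it uses $v<\pi/2$, whereas you prove $\tanh t\le\arctan\sinh t$ directly by differentiation --- an equivalent fact (both reduce to $\sin u\le u$ through the relation $\sin(\arctan\sinh t)=\tanh t$) --- and your observation that this renders the hypothesis $v_{\mathbb{B}^2}(x,y)<\pi/2$ superfluous is accurate.
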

\begin{proof}
(1) By Theorems \ref{sv} and \ref{sp},
$$v_{\mathbb{B}^2}(x,y)\leq \frac{4(1+\lambda)}{(1-\lambda)} s_{\mathbb{B}^2}(x,y)\leq \frac{4(1+\lambda)}{(1-\lambda)} p_{\mathbb{B}^2}(x,y).$$

(2) By Lemma \ref{3.7} and \cite[3.15]{klvw} we have $$\rho_{\mathbb{B}^2}^{*}(x,y)=\arctan\left(\sinh{\frac{\rho_{\mathbb{B}^2}(x,y)}{2}}\right)\leq v_{\mathbb{B}^2}(x,y).$$
Then
$$\rho_{\mathbb{B}^2}(x,y)\leq 2{\rm arsinh}(\tan(v_{\mathbb{B}^2}(x,y))).$$
Then if $v_{\mathbb{B}^2}(x,y)\in (0,\pi/2),$
\begin{eqnarray}
p_{\mathbb{B}^2}(x,y) &\leq& \tanh({\rm arsinh}(\tan(v_{\mathbb{B}^2}(x,y))))\nonumber \\
&=&\frac{\tan(v_{\mathbb{B}^2}(x,y))}{\sqrt{1+\tan^2(v_{\mathbb{B}^2}(x,y))}}\nonumber \\
&=& \sin(v_{\mathbb{B}^2}(x,y))\nonumber\\
&\leq &  v_{\mathbb{B}^2}(x,y).\nonumber
\end{eqnarray} \qedhere
\end{proof}

\begin{thm}\label{rk2}
For $x, y\in\mathbb{B}^n$ we have
\begin{equation}
\tanh \left( \frac{\rho_{\mathbb{B}^n}(x,y)}{2} \right) \leq 2 s_{\mathbb{B}^n}(x,y).
\end{equation}
\end{thm}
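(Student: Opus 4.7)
The plan is to chain together three simple bounds so that the whole statement reduces to a one-line algebraic inequality, sidestepping entirely any choice of an extremal boundary point $z\in\partial\mathbb{B}^n$. Let $L=|x-y|$, $d=\min\{d_{\mathbb{B}^n}(x),d_{\mathbb{B}^n}(y)\}=1-\max\{|x|,|y|\}$, and $u=L/d\ge 0$. Then $j_{\mathbb{B}^n}(x,y)=\log(1+u)$ and a direct computation gives
\[
\tanh j_{\mathbb{B}^n}(x,y)=\frac{(1+u)^2-1}{(1+u)^2+1}=\frac{u(u+2)}{u^2+2u+2}.
\]

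First I would invoke Lemma \ref{10}, which gives $\rho_{\mathbb{B}^n}(x,y)\le 2\,j_{\mathbb{B}^n}(x,y)$; combined with the monotonicity of $\tanh$, this yields
\[
\tanh\frac{\rho_{\mathbb{B}^n}(x,y)}{2}\;\le\;\tanh j_{\mathbb{B}^n}(x,y)\;=\;\frac{u(u+2)}{u^2+2u+2}.
\]
Next I would apply the preceding lemma (the one producing the lower bound $w(x,y)=|x-y|/(|x-y|+2(1-t))$, with $t=\max\{|x|,|y|\}$) to obtain $s_{\mathbb{B}^n}(x,y)\ge u/(u+2)$. Combining these two bounds, the claim reduces to the elementary inequality
\[
\frac{u(u+2)}{u^2+2u+2}\;\le\;\frac{2u}{u+2},
\]
which for $u>0$ is equivalent to $(u+2)^2\le 2(u^2+2u+2)$, i.e.\ $0\le u^2$; the case $u=0$ (when $x=y$) is trivial.

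The main obstacle is really one of packaging rather than difficulty: the crucial observation is that the $j$-based hyperbolic upper bound and the $s$-lower bound from the preceding lemma both naturally express through the same quantity $L/(L+2d)$, so that once these formulas are written down the desired factor of $2$ drops out algebraically. This approach is attractive because it avoids any case analysis on $|x|,|y|$, any explicit choice of $z\in\partial\mathbb{B}^n$, and any appeal to Lemma \ref{3.7} (whose second inequality is itself derived from the present result).
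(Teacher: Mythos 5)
Your proof is correct, and it takes a genuinely different route from the paper's. The paper argues geometrically: after disposing of the case $y=0$ separately, it uses the formula $\tanh\bigl(\rho_{\mathbb{B}^n}(x,y)/2\bigr)=|x-y|/(|x|\,|x^*-y|)$ from \eqref{tro} and shows $\inf_{z\in\partial\mathbb{B}^n}\bigl(|x-z|+|z-y|\bigr)\le 2|x|\,|x^*-y|$ by a case analysis on the angle $\gamma$ between $[0,x]$ and $[0,y]$, choosing $z$ as the midpoint of the arc between $x/|x|$ and $y/|y|$ and invoking the Law of Cosines and properties of right triangles. You replace all of this with the chain $\tanh(\rho_{\mathbb{B}^n}/2)\le\tanh(j_{\mathbb{B}^n})=u(u+2)/(u^2+2u+2)$ (from Lemma \ref{10} and monotonicity of $\tanh$) and $2s_{\mathbb{B}^n}\ge 2u/(u+2)$ (from the radial lower bound $w(x,y)$), after which the constant $2$ drops out of $0\le u^2$; every step checks, and you correctly note that appealing to Lemma \ref{3.7} would be circular since its second inequality is deduced from the present theorem. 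The only point to tidy is that the lemma you quote is stated for $|x|>|y|$; but the inequality $s_{\mathbb{B}^n}(x,y)\ge |x-y|/(|x-y|+2(1-t))$, $t=\max\{|x|,|y|\}$, holds for all $x,y\in\mathbb{B}^n$ directly from the definition of $s_{\mathbb{B}^n}$ by testing the boundary point $z_0=x/|x|$ when $t=|x|>0$ (the case $x=y=0$ being trivial), so this is harmless. What your approach buys is brevity and the elimination of the geometric case analysis and the extremal-point discussion; what the paper's proof offers in exchange is explicit geometric information about a near-optimal boundary point, which is not needed for the stated inequality.
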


\begin{proof}
Suppose first that one of the points $x$ and $y$ is $0$. Without loss of generality, we may suppose that $y=0$. From the definition of $s_{\mathbb{B}^n}$ it follows that for $z=\frac{x}{|x|}$
\[
s_{\mathbb{B}^n}(x,0)\geq \frac{|x-0|}{|x-z|+|z-0|}=\frac{|x|}{2-|x|}.
\]
Because
\[
\tanh \left( \frac{\rho_{\mathbb{B}^n}(x,y)}{2} \right)=|x|
\]
we easily see that the claim holds if one of the points is $0$. The case when both points are $0$ is trivial.

By \eqref{tro} and \eqref{sm} it is enough to show that $$I\leq 2|x||x^*-y|,\quad I=\inf_{z\in\partial{\mathbb{B}^n}} |x-z|+|z-y|,$$
Assume $|y|\leq |x|$. Denote $|y|=t|x|$ for $t\in [0,1]$, $\gamma\in [0,\pi],$ is angle between $[0,x]$ and $[0,y]$.

{\it Case A.} $\gamma\geq\frac{\pi}{2}.$
Now
\begin{equation}\label{rhs1}
2|x||x^*-y|\geq 2|x|\frac{1}{|x|}=2,
\end{equation}
Moreover choose $z_1=\frac{x}{|x|}$, then
\begin{eqnarray}\label{lhs1}
I&\leq& |x-z_1|+|z_1-y|\\
&\leq & 1-|x|+\sqrt{t^2|x|^2+1+2t|x|}\nonumber\\
&=& 2-|x|+t|x|\nonumber\\
&=& 2-(|x|(1-t))\leq 2.\nonumber
\end{eqnarray}
So by \eqref{rhs1} and \eqref{lhs1}, $$I\leq 2|x||x^*-y|,$$
{\it Case B.} $\gamma\leq\frac{\pi}{2}.$
\begin{equation}
2|x||x^*-y|=2||y|x-z_2| = 2||x|y-z_1|,
\end{equation}
where $|z_2|=\frac{y}{|y|}$ and $|z_1|=\frac{x}{|x|}$. Next we choose $z$ in the infimum to be the middle point of $z_1$ and $z_2$ on the unit sphere. This means that
$\measuredangle(x,0,z)=\measuredangle(z,0,y)=\gamma/2$ and $|z|=1$.
We know that
$$I\leq |x-z|+|z-y|,$$

\begin{figure}[h]
\begin{center}
     \includegraphics[width=10cm]{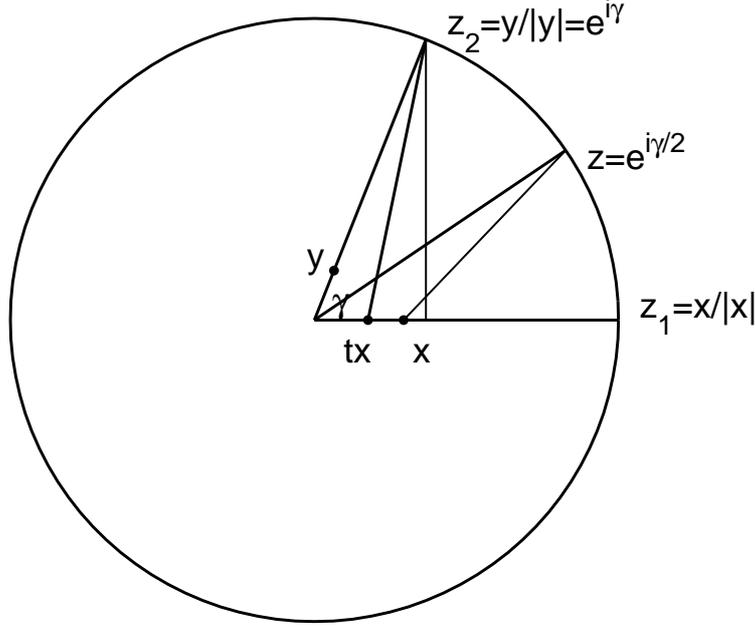}
\caption{Proof of Theorem \ref{rk2}. The case $r=|z-x|> \sin(\gamma)$. }
\end{center}
    \end{figure}
We next show that
\begin{equation}\label{mat}
p/r\geq 1,\quad p=|z_2-|y|x|,\quad r=|z-x|.
\end{equation}

By elementary geometry, applying the properties of the right triangle $\Delta(0, z_2, (\cos \gamma) z_1)$
and the Law of Cosines, we see that  
\begin{equation}\label{eleg}
p\geq |z_2-  (\cos \gamma) z_1|=\sin\gamma\geq \sqrt{1+\cos^2(\gamma)-2\cos(\gamma)\cos(\gamma/2)} =|z-(\cos\gamma)z_1| \,.
\end{equation}
The second inequality follows because for $\gamma\in (0,\pi/2),$
$$\sin^2(\gamma)>1+\cos^2(\gamma)-2\cos(\gamma)\cos(\gamma/2)$$
by basic trigonometry.

If $r\le \sin\gamma$, then by \eqref{eleg} $p/r\geq 1$ clearly holds.
In the remaining case $r=|z-x|> \sin\gamma$. Because $x \in [0,z_1]$,
this means by \eqref{eleg} that $x \in[0,  (\cos\gamma) z_1]$ and hence the angle between the segments $[x,z_2]$ and $[x,0]$ is more than $\pi/2\,$ and hence
$$ p=|z_2-|y|x|>|z_2-x|.$$

Finally, we see that $p/r\geq |z_2-x|/|z-x|>1$, because $x$ and $z$ both are in the same half plane determined by the bisecting normal of the segment $[z_2,z].$
Symmetrically we obtain that $$|z-y|\le ||x|y-z_1|,\,$$
and hence
$$|x-z|+|z-y|\le ||y|x-z_2|+||x|y-z_1|=2|x||x^*-y|$$
and the proof is complete.
\end{proof}
\begin{cor}\label{cor3.30}
\begin{enumerate}
\item
If $f:\mathbb{H}^n\rightarrow \mathbb{H}^n$ is a  M\"obius transformation onto $\mathbb{H}^n$, then for all $x,y\in\mathbb{H}^n$,
\[
s_{\mathbb{H}^n}(f(x),f(y))=s_{\mathbb{H}^n}(x,y).
\]
\item
If $f:\mathbb{H}^n\rightarrow \mathbb{B}^n$ is a  M\"obius transformation onto $\mathbb{B}^n$, then for all $x,y\in\mathbb{H}^n$,
\[
s_{\mathbb{B}^n}(f(x),f(y))\leq s_{\mathbb{H}^n}(x,y).
\]
\item
If $f:\mathbb{B}^n\rightarrow \mathbb{H}^n$ is a  M\"obius transformation onto $\mathbb{H}^n$, then for all $x,y\in\mathbb{B}^n$,
\[
s_{\mathbb{H}^n}(f(x),f(y))\leq 2s_{\mathbb{B}^n}(x,y).
\]
\item
If $f:\mathbb{B}^n\rightarrow \mathbb{B}^n$ is a  M\"obius transformation onto $\mathbb{B}^n$, then for all $x,y\in\mathbb{B}^n$,
\[
s_{\mathbb{B}^n}(f(x),f(y))\leq 2 s_{\mathbb{B}^n}(x,y).
\]
\end{enumerate}
\end{cor}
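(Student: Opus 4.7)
The four parts of the corollary all follow from a single organizing principle: assemble the relations between $s_G$ and the hyperbolic metric $\rho_G$ on the two model domains, then use the fact that a M\"obius transformation between any two of $\mathbb{H}^n,\mathbb{B}^n$ is a hyperbolic isometry. The proof amounts to a short bookkeeping exercise rather than any new estimate.

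The two inputs I will use are the following. For $\mathbb{H}^n$, formula \eqref{s_H0} gives the \emph{exact} identity $s_{\mathbb{H}^n}(x,y)=\tanh(\rho_{\mathbb{H}^n}(x,y)/2)$. For $\mathbb{B}^n$, since the ball is convex, Lemma \ref{sp}(1) gives $s_{\mathbb{B}^n}\le p_{\mathbb{B}^n}$, and Lemma \ref{3.7} upgrades this to $s_{\mathbb{B}^n}(x,y)\le \tanh(\rho_{\mathbb{B}^n}(x,y)/2)$. The reverse inequality, with a multiplicative loss of $2$, is precisely Theorem \ref{rk2}: $\tanh(\rho_{\mathbb{B}^n}(x,y)/2)\le 2\,s_{\mathbb{B}^n}(x,y)$. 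Thus on $\mathbb{B}^n$ the metrics $s$ and $\tanh(\rho/2)$ are comparable with constant $2$, while on $\mathbb{H}^n$ they coincide.

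With these in hand each assertion is a one-line chain. For (1) the two equalities $s_{\mathbb{H}^n}(f(x),f(y))=\tanh(\rho_{\mathbb{H}^n}(f(x),f(y))/2)=\tanh(\rho_{\mathbb{H}^n}(x,y)/2)=s_{\mathbb{H}^n}(x,y)$ give the claim, using M\"obius invariance of $\rho_{\mathbb{H}^n}$ in the middle step. For (2) I write $s_{\mathbb{B}^n}(f(x),f(y))\le \tanh(\rho_{\mathbb{B}^n}(f(x),f(y))/2)$ by the $\mathbb{B}^n$-upper bound above, then use that $f$ maps $(\mathbb{H}^n,\rho_{\mathbb{H}^n})$ isometrically onto $(\mathbb{B}^n,\rho_{\mathbb{B}^n})$ to rewrite this as $\tanh(\rho_{\mathbb{H}^n}(x,y)/2)=s_{\mathbb{H}^n}(x,y)$. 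For (3) I use $s_{\mathbb{H}^n}(f(x),f(y))=\tanh(\rho_{\mathbb{H}^n}(f(x),f(y))/2)=\tanh(\rho_{\mathbb{B}^n}(x,y)/2)\le 2\,s_{\mathbb{B}^n}(x,y)$ where the last step is Theorem \ref{rk2}. For (4) I combine the $\mathbb{B}^n$-upper bound, isometric invariance of $\rho_{\mathbb{B}^n}$ under $f$, and Theorem \ref{rk2} once more: $s_{\mathbb{B}^n}(f(x),f(y))\le \tanh(\rho_{\mathbb{B}^n}(f(x),f(y))/2)=\tanh(\rho_{\mathbb{B}^n}(x,y)/2)\le 2\,s_{\mathbb{B}^n}(x,y)$.

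There is no real obstacle here; the only thing to be careful about is to invoke the correct direction of the comparison (lower bound vs.\ upper bound by $\tanh(\rho/2)$) on the correct side of each inequality so that the loss factor of $2$ appears in (3) and (4) but not in (1) and (2). The fact that M\"obius maps between $\mathbb{H}^n$ and $\mathbb{B}^n$ are hyperbolic isometries is standard and is already implicitly used throughout the paper, so it can be cited without further comment.
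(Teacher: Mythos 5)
Your proposal is correct and follows essentially the same route as the paper: the published proof likewise combines the isometry property of M\"obius maps between $\mathbb{H}^n$ and $\mathbb{B}^n$ with the identity $s_{\mathbb{H}^n}=\tanh(\rho_{\mathbb{H}^n}/2)$, Lemma \ref{3.7}, and Theorem \ref{rk2}. Your write-up merely spells out the four one-line chains (and explicitly adds Lemma \ref{sp}(1) to justify $s_{\mathbb{B}^n}\le\tanh(\rho_{\mathbb{B}^n}/2)$) where the paper leaves the bookkeeping to the reader.
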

\begin{proof}
It is a basic fact that a M\"obius transformation $f:G\rightarrow D=fG$ with $G, D\in \{\mathbb{B}^n,\mathbb{H}^n\}$ defines an isometry $f:(G,\rho_G)\rightarrow (D,\rho_D)$ between hyperbolic spaces. This fact combined with \eqref{s_H0}, Lemma \ref{3.7} and Theorem \ref{rk2} yields the proof.
\end{proof}

\medskip
We were led to Conjecture \ref{conj1.9} by MATLAB experiments. We now show that if the conjecture holds true, then the constant
$1+a$ cannot be improved when $n=2\,.$

\medskip
{\bf Proof of Theorem \ref{mob}.}
Let $h(z)=\frac{z+a}{1+az}$. Then $h(0)=a$, $a>0$. Choose $b$ such that $h(b)=\frac{1+va}{1+v}$, $v>0$. Easy calculation yields $b=\frac{1}{1+v(1+a)}$. Since $s_{\mathbb{B}^2}(r,t)=\frac{t-r}{2-t-r}$ for $0<r<t$ we see that
\bequu
\frac{s_{\mathbb{B}^2}(h(0),h(b))}{s_{\mathbb{B}^2}(0,b)} &=& \frac{\frac{1+va}{1+v}-a}{2-a-\frac{1+va}{1+v}}\cdot \frac{2-\frac{1}{1+v(1+a)}}{\frac{1}{1+v(1+a)}}\\
&=& \frac{1+2v(1+a)}{1+2v}\rightarrow 1+a,
\eequu
when $v\rightarrow \infty. \hfill \square$
\begin{thm}\label{3.36}
 If $f : {\mathbb{B}^n} \to  {\mathbb{B}^n} = f( {\mathbb{B}^n})$ is a
M\"obius transformation with $f(a) =0$, for some $a\in \mathbb{B}^n$, then
  for all distinct points $x,y\in  {\mathbb{B}^n}$ we have
\[
\frac{1-|a|}{1+|a|} \, s_{\mathbb{B}^n}(x,y)\leq  s_{\mathbb{B}^n}(f(x),f(y)) \le \frac{1+|a|}{1-|a|} \, s_{\mathbb{B}^n}(x,y)\,.
\]
\end{thm}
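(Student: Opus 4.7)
My plan is to exploit that any Möbius self-map of $\mathbb{B}^n$ with $f(a)=0$ decomposes as $f=U\circ\sigma_a$, where $U$ is a Euclidean isometry and $\sigma_a$ is the standard involution swapping $0$ and $a$. Since $U$ preserves every Euclidean distance appearing in \eqref{sm}, I can reduce the proof to the case $f=\sigma_a$. The key identity I will use is the classical distortion formula
\[
|\sigma_a(u)-\sigma_a(v)|=\frac{(1-|a|^2)\,|u-v|}{|a|^2\,|u-a^*|\,|v-a^*|},\qquad a^*=a/|a|^2,
\]
which is valid for all $u,v\in\overline{\mathbb{B}^n}$, so it applies simultaneously to interior arguments and to boundary arguments.

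The next step is to observe that $\sigma_a$ maps $\partial\mathbb{B}^n$ bijectively onto itself, so in the definition of $s_{\mathbb{B}^n}(f(x),f(y))$ I can parametrize the boundary points as $\zeta=\sigma_a(z)$ with $z\in\partial\mathbb{B}^n$. Plugging the displayed identity into the numerator $|f(x)-f(y)|$ and into the two summands of $|f(x)-\zeta|+|\zeta-f(y)|$, the common prefactor $(1-|a|^2)/|a|^2$ and the factors $|x-a^*|,|y-a^*|$ cancel, rewriting the supremum purely in terms of the original Euclidean distances among $x,y,z$ weighted by the offsets $|u-a^*|$.

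Now, since $|a^*|=1/|a|$, the triangle inequality yields
\[
\frac{1-|a|}{|a|}\le |u-a^*|\le \frac{1+|a|}{|a|}\qquad\text{for every }u\in\overline{\mathbb{B}^n}.
\]
I apply the upper estimate to the numerator factor $|z-a^*|$ and the lower estimate to $|x-a^*|,|y-a^*|$ in the denominator; taking the supremum over $z\in\partial\mathbb{B}^n$ then gives $s_{\mathbb{B}^n}(f(x),f(y))\le\frac{1+|a|}{1-|a|}\,s_{\mathbb{B}^n}(x,y)$. For the lower bound I apply the same upper bound to $f^{-1}$, setting $b=f(0)$ so that $f^{-1}(b)=0$; since $f$ is a hyperbolic isometry of $\mathbb{B}^n$, formula \eqref{sro} forces $|b|=|a|$, so the constant produced for $f^{-1}$ is the same $\frac{1+|a|}{1-|a|}$, and rearranging gives the lower estimate. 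I expect the only delicate point to be the cancellation step in the second paragraph; once that identity is in hand, the remaining work is a routine triangle-inequality estimate of $|u-a^*|$.
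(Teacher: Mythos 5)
Your proposal is correct and follows essentially the same route as the paper: the paper also reduces to the canonical map $T_a$, uses the identity $|T_a x-T_a y|=r^2|x-y|/(|x-a^*||y-a^*|)$ with $r^2=|a|^{-2}-1=(1-|a|^2)/|a|^2$, bounds $|w-a^*|\le 1+|a|^{-1}$ and $|x-a^*|,|y-a^*|\ge |a|^{-1}-1$, and obtains the lower bound by applying the upper bound to $f^{-1}$. Your explicit check that $|f(0)|=|a|$ (so the inverse map yields the same constant) is a small point the paper leaves implicit, but the argument is the same.
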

  \begin{proof} If $f(0) =0$ then $f$ is a rotation and there is nothing
to prove. Otherwise $f(a) =0$ some
  $a \neq 0\,.$
  Let $f= T_a$ be the canonical representation of a M\"obius
transformation, see \cite{b}. Then
  with $a^*= a/|a|^2, r= \sqrt{|a|^{-2}-1}$ we have
  $$
  |T_a(x)-T_a(y)|= \frac{r^2 |x-y|}{|x-a^*||y-a^*|} \,.
  $$
  If $ w \in \partial  {\mathbb{B}^n}\,,$ then this formula yields
  \begin{eqnarray*}
  Q(x,y,w) & = & \frac{|T_a x - T_a y|}{|T_a x - T_a w| + |T_a w - T_a y|}
:  \frac{|x-y|}{|x-w|+|w-y|}\\
  & = & \frac{|x-w|+|w-y|}{\beta |x-w|+ \gamma|w-y|}
  \end{eqnarray*}
  with $\beta= |y-a^*|/|w-a^*|, \gamma= |x-a^*|/|w-a^*|\,.$ Clearly,
  $$
  |w-a^*| \le 1 +|a|^{-1}  \, \quad |x-a^*|, |y-a^*|\ge |a|^{-1}-1
  $$
  and hence
  $$
  Q(x,y,w)\le \frac{|x-w|+|w-y|}{ |x-w|+ |w-y|} \frac{1+|a|}{1-|a|} =
\frac{1+|a|}{1-|a|} \,.
  $$
  Thus we have for all $x,y\in   {\mathbb{B}^n}, w \in  \partial
{\mathbb{B}^n}$
  $$
   \frac{|T_a x - T_a y|}{|T_a x - T_a w| + |T_a w - T_a y|} \le
\frac{1+|a|}{1-|a|} \frac{|x-y|}{|x-w|+|w-y|}\,.
  $$
  Taking supremum over all $ w \in  \partial  {\mathbb{B}^n}$ yields the
second inequality. Because the inverse of a M\"obius transformation also is a M\"obius transformation, the first inequality follows from the second one.
  \end{proof}

We compare next $j$, $p$, $s$ and $v$ in domains $\mathbb{R}^n \setminus \{ e_1 \}$ and $\mathbb{B}^n$. By the monotonicity with respect to domains it is clear that for all $x,y \in \mathbb{B}^n$ and  $m \in \{ j, p, s, v \}$ we have $m_{\mathbb{R}^n \setminus \{ e_1 \}}(x,y) \leq m_{\mathbb{B}^n}(x,y)$. Next we consider the comparison in the opposite direction. Let us start by introducing the following lemma.

\begin{lem}\label{loglemma}
   For $0 < b \leq a$ the function
   \[
     f(x) = \frac{\log (1+ax)}{\log (1+bx)}, x \in (0,\infty),
   \]
   is decreasing.
\end{lem}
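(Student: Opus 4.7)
The plan is to reduce the bivariate inequality $f(x_2)\leq f(x_1)$ (for $x_1<x_2$) to a single-variable monotonicity statement by differentiating $f$ and unpacking the sign of the numerator. Concretely, I would compute
\[
f'(x)=\frac{\dfrac{a}{1+ax}\log(1+bx)-\dfrac{b}{1+bx}\log(1+ax)}{(\log(1+bx))^{2}},
\]
so that the sign of $f'(x)$ equals the sign of $N(x)=\dfrac{a\log(1+bx)}{1+ax}-\dfrac{b\log(1+ax)}{1+bx}$. Since the denominator $(1+ax)(1+bx)$ and the factor $\tfrac{1}{abx}$ are all positive, the inequality $f'(x)\leq 0$ is equivalent to
\[
\frac{(1+bx)\log(1+bx)}{bx}\leq \frac{(1+ax)\log(1+ax)}{ax}.
\]

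Next I would observe that because $b\leq a$ and $x>0$, the substitution $s=bx$, $t=ax$ gives $0<s\leq t$, so it suffices to show that
\[
h(t)=\frac{(1+t)\log(1+t)}{t},\quad t>0,
\]
is non-decreasing. A short calculation, using $h(t)=\log(1+t)+\log(1+t)/t$, yields
\[
h'(t)=\frac{1}{1+t}\cdot\frac{1+t}{t}-\frac{\log(1+t)}{t^{2}}=\frac{t-\log(1+t)}{t^{2}},
\]
which is non-negative for every $t>0$ since the classical inequality $\log(1+t)\leq t$ holds on that range, with equality only at $t=0$. Hence $h$ is strictly increasing on $(0,\infty)$, and tracing back through the equivalences gives $f'(x)<0$ for all $x>0$, proving the claim.

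The only real obstacle is keeping the signs straight through the chain of equivalences: one has to recognise that the natural normalisation of $N(x)$ is multiplication by $\tfrac{(1+ax)(1+bx)}{abx}$, which makes the two sides of the resulting inequality depend only on the combined variable $ax$ (resp.\ $bx$) rather than separately on $a,b,x$. Once this is in place, the reduction to monotonicity of $h$ and the appeal to $\log(1+t)\leq t$ are routine.
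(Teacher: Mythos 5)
Your proposal is correct and follows essentially the same route as the paper: the same derivative computation, the same reduction of $f'(x)\leq 0$ to the inequality $\frac{(1+bx)}{b}\log(1+bx)\leq\frac{(1+ax)}{a}\log(1+ax)$, and the same final appeal to $\log(1+t)\leq t$. The only cosmetic difference is that you divide by the extra factor $x$ so as to work with the genuine one-variable function $h(t)=\frac{(1+t)\log(1+t)}{t}$, whereas the paper keeps $x$ as a parameter and shows $c\mapsto\frac{1+cx}{c}\log(1+cx)$ is increasing; these are the same computation.
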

\begin{proof}
   Since
   \[
     f'(x) = \frac{\frac{a}{1+ax}\log (1+bx)-\frac{b}{1+bx}\log
(1+ax)}{\log^2 (1+bx)}
   \]
   the inequality $f'(x) \leq 0$ is equivalent to
   \begin{equation}\label{function f}
     \frac{1+bx}{b}\log(1+bx) \leq \frac{1+ax}{a}\log(1+ax).
   \end{equation}
   Now we show that the function
   \[
     g(c) = \frac{1+cx}{c}\log(1+cx)
   \]
   is increasing on $(0,\infty)$, which implies \eqref{function f} and
the assertion. This is clear because $0 < b \leq a$ and
   \[
     g'(c) = \frac{cx-\log(1+cx)}{c^2}>0
   \]
as $\log(1+y) < y$ for $y>0$.
\end{proof}

\begin{thm}\label{riku3}
  Let $t \in (0,1)$ and $m \in \{ j, p, s \}$. There exists a constant $c_m=c_m(t) > 1$ such that for all $x,y \in \mathbb{B}^n$ with $|x|,|y|<t$ we have
  \[
    m_{\mathbb{B}^n}(x,y) \leq c_m m_{\mathbb{R}^n \setminus \{ e_1 \}}(x,y).
  \]
  Moreover, $c_m(t) \to 1$ as $t \to 0$ and $c_m(t) \to \infty$ as $t \to 1$, for all $m\in \{j,p,s\}$.
\end{thm}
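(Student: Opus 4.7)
The plan is to exploit the inclusion $\mathbb{B}^n \subset G_1 := \mathbb{R}^n \setminus \{e_1\}$: by domain monotonicity $m_{G_1}(x,y) \le m_{\mathbb{B}^n}(x,y)$ for each $m \in \{j, p, s\}$, so the task reduces to bounding the ratio $m_{\mathbb{B}^n}/m_{G_1}$ from above on $\mathbb{B}^n(t)$. The key ingredient is the pointwise distance-ratio estimate: for $|x| \le t$,
\[
\frac{d_{G_1}(x)}{d_{\mathbb{B}^n}(x)} = \frac{|x - e_1|}{1 - |x|} \le \frac{1+|x|}{1-|x|} \le \frac{1+t}{1-t},
\]
which is asymptotically sharp as $x \to -t e_1$. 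My claim is that $c_m(t) = (1+t)/(1-t)$ works uniformly for all three metrics.

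For $m = p$, a direct manipulation yields
\[
\frac{p_{\mathbb{B}^n}(x,y)}{p_{G_1}(x,y)} = \sqrt{\frac{|x-y|^2 + 4\, d_{G_1}(x) d_{G_1}(y)}{|x-y|^2 + 4\, d_{\mathbb{B}^n}(x) d_{\mathbb{B}^n}(y)}} \le \sqrt{\frac{d_{G_1}(x) d_{G_1}(y)}{d_{\mathbb{B}^n}(x) d_{\mathbb{B}^n}(y)}} \le \frac{1+t}{1-t},
\]
using the elementary inequality $(w+V)/(w+U) \le V/U$ for $0 \le U \le V$ and $w \ge 0$. For $m = s$, pick an extremal $z^* \in \partial \mathbb{B}^n$ in the supremum defining $s_{\mathbb{B}^n}$ and estimate
\[
\frac{s_{\mathbb{B}^n}(x,y)}{s_{G_1}(x,y)} = \frac{|x - e_1| + |e_1 - y|}{|x - z^*| + |z^* - y|} \le \frac{2(1+t)}{2(1-t)} = \frac{1+t}{1-t},
\]
via $|x - z^*| \ge 1 - |x| \ge 1 - t$ and $|x - e_1| \le 1 + |x| \le 1 + t$. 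For $m = j$, set $a = \min\{d_{\mathbb{B}^n}(x), d_{\mathbb{B}^n}(y)\}$ and $b = \min\{d_{G_1}(x), d_{G_1}(y)\}$, so that $a \le b$ and $b/a \le (1+t)/(1-t)$; applying Lemma \ref{loglemma} with parameters $1/a,\,1/b$ shows that $u \mapsto \log(1 + u/a)/\log(1 + u/b)$ is decreasing on $(0,\infty)$, with supremum $b/a$ achieved as $u \to 0^+$. Substituting $u = |x-y|$ gives the desired bound.

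Clearly $c_m(t) \to 1$ as $t \to 0$. For the divergence as $t \to 1$, I will test the estimates at $x = -t e_1$ and $y = -s e_1$ with $s \nearrow t$: a quick geometric check shows that $z^* = -e_1$ is extremal for $s_{\mathbb{B}^n}$ in this configuration, and all three ratios $m_{\mathbb{B}^n}(x,y)/m_{G_1}(x,y)$ tend to $(1+t)/(1-t)$ (for $j$ and $p$ by letting $|x-y| \to 0$), forcing any admissible $c_m(t)$ to be at least $(1+t)/(1-t)$, which diverges. The main obstacle is the $j$-metric step, where naive substitution of the distance bounds inside the logarithm gives the wrong direction; the argument must pass through Lemma \ref{loglemma} in order to transfer the bound on distances into a bound on the corresponding logarithms. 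The other two metrics yield to routine algebraic manipulation once the distance-ratio bound $(1+t)/(1-t)$ has been identified.
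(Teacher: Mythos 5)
Your proposal is correct and follows essentially the same route as the paper: pointwise comparison of $d_{\mathbb{B}^n}$ and $d_{\mathbb{R}^n\setminus\{e_1\}}$, Lemma \ref{loglemma} plus the limit as $|x-y|\to 0$ for the $j$-case, and the boundary-point estimates $1-t\le |x-z^*|$, $|x-e_1|\le 1+t$ for the $s$-case. The only differences are cosmetic improvements: your mediant-type inequality gives the slightly sharper constant $c_p=(1+t)/(1-t)$ in place of the paper's $\sqrt{(2t^2+2t+1)}/(1-t)$, and your test configuration $x=-te_1$, $y=-se_1$ establishes optimality of the constants, which the paper does not attempt (it only checks the limits of the constants it constructs).
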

\begin{proof}
  We denote $m_1 = m_{\mathbb{B}^n}$, $m_2 = m_{\mathbb{R}^n \setminus \{ e_1 \} }$ and find upper bound for $\frac{m_1}{m_2}$, which gives us $c_m$.

  Let us start with $m=j$. We denote $z = |x-y| \in [0,2t)$ and obtain by Lemma \ref{loglemma}
  \begin{eqnarray*}
    \frac{j_1}{j_2} & = & \frac{\log \left( 1+ \frac{z}{\min \{ 1-|x|,1-|y| \}} \right) }{\log \left( 1+ \frac{z}{\min \{ |x-e_1|,|y-e_1| \}} \right) } \leq \frac{\log \left( 1+ \frac{z}{1-t} \right) }{\log \left( 1+ \frac{z}{1+t} \right) }\\
    & \leq & \lim_{z \to 0} \frac{\log \left( 1+ \frac{z}{1-t} \right) }{\log \left( 1+ \frac{z}{1+t} \right) } = \lim_{z \to 0} \frac{1+t+z}{1-t+z} = \frac{1+t}{1-t} = c_j,
  \end{eqnarray*}
    where the second equality follows from l'H\^{o}spital's rule. Obviously $c_j \to 1$ as $t \to 0$ and $c_j \to \infty$ as $t \to 1$

  Let us now consider $m=p$. Now
  \[
    \frac{p_1^2}{p_2^2} = \frac{|x-y|^2 + 4|x-e_1||y-e_1|}{|x-y|^2 + 4(1-|x|)(1-|y|)} \le \frac{4t^2 + 4(1+t)^2}{0+4(1-t)^2} = \frac{2t^2+2t+1}{t^2-2t+1}
  \]
  and we can choose
  \[
    c_p = \sqrt{ \frac{2t^2+2t+1}{t^2-2t+1} }.
  \]
  Clearly $c_p \to 1$ as $t \to 0$ and $c_p \to \infty$ as $t \to 1$.

  Next we set $m=s$ and obtain by geometry
  \[
    \frac{s_1}{s_2} = \frac{|x-e_1|+|y-e_1|}{\inf_{z \in \partial \mathbb{B}^n} |x-z|+|z-y|} \le \frac{2(1+t)}{2(1-t)} = \frac{1+t}{1-t} = c_s.
  \]
   Again it is clear that $c_s \to 1$ as $t \to 0$ and $c_s \to \infty$ as $t \to 1$.
\end{proof}

   Note that for the visual angle metric $v$ the result of Theorem \ref{riku3} does not hold. We would need an upper bound for
   \[
     \frac{v_{\mathbb{B}^n}(x,y)}{v_{\mathbb{R}^n \setminus \{ e_1 \}}(x,y)} = \frac{\sup_{z \in \partial \mathbb{B}^n} \measuredangle(x,z,y)}{\measuredangle(x,e_1,y)},
   \]
  but choosing $x$ and $y$ to be distinct points on the $x_1$-axis $$\sup_{z \in \partial \mathbb{B}^n} \measuredangle(x,z,y) > 0$$ and $\measuredangle(x,e_1,y) = 0$.

  Next result demonstrates the sensitivity to boundary variation. We consider domains $G \subset \mathbb{R}^n$ and $G' = G \setminus \{ x \}$, where $x \in G$. Again by the monotonicity we have $m_{G}(y,z) \leq m_{G'}(y,z)$ for all $y,z \in G'$ and  $m \in \{ j, p, s, v \}$.

\begin{thm}\label{riku4}
  Let $G \subset \mathbb{R}^n$, $x \in G$, $t \in (0,1)$ and $m \in \{ j, p, s \}$. Then there exists a constant $c_m = c_m(t)$ such that for all $y,z \in G \setminus \mathbb{B}^n(x,t d_G(x))$ we have
  \[
    m_{G \setminus \{ x \}}(y,z) \leq c_m m_G(y,z).
  \]
  Moreover, the constant is best possible as $t \to 1$. This means that $c_j, c_p, c_s \to 2$ as $t \to 1$.
\end{thm}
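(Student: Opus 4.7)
The plan is to establish a uniform lower bound $d_{G\setminus\{x\}}(w) \geq \frac{t}{1+t}\,d_G(w)$ for every $w \in G \setminus \mathbb{B}^n(x,\, t\,d_G(x))$, and then transfer this estimate to each of the three metrics. The Lipschitz property of the distance function gives $d_G(w) \leq d_G(x) + |w-x|$, and the hypothesis $|w-x| \geq t\,d_G(x)$ upgrades this to $d_G(w) \leq \frac{1+t}{t}|w-x|$. Since $d_{G\setminus\{x\}}(w) = \min\{d_G(w),\, |w-x|\}$, in both cases $d_G(w) \leq |w-x|$ (where the two distances agree) and $d_G(w) > |w-x|$ (where we invoke the above inequality) one obtains the desired bound. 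The extremal ratio $(1+t)/t$ is approached when $|w-x| = t\,d_G(x)$ and $w$ lies on the ray from a nearest boundary point of $G$ to $x$ extended through $x$.

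For the $p$-metric this inserts directly into the definition: setting $\alpha = d_{G\setminus\{x\}}(y)\,d_{G\setminus\{x\}}(z)/(d_G(y)\,d_G(z)) \geq (t/(1+t))^2$ and using the elementary inequality $(a+b)/(a+\alpha b) \leq 1/\alpha$ for $\alpha \in (0,1]$ (applied with $a=|y-z|^2$ and $b = 4 d_G(y)d_G(z)$) gives $p_{G\setminus\{x\}}(y,z) \leq \frac{1+t}{t}\,p_G(y,z)$. For the $j$-metric, write $j_{G\setminus\{x\}}(y,z) = \log(1+au)$ with $u = |y-z|/\min\{d_G(y),d_G(z)\}$ and $a \leq (1+t)/t$; Bernoulli's inequality, equivalently Lemma \ref{loglemma} applied to $\log(1+au)/\log(1+u)$, gives $\log(1+au) \leq a\log(1+u)$, so $j_{G\setminus\{x\}}(y,z) \leq \frac{1+t}{t}\,j_G(y,z)$.

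The $s$-metric requires a separate argument because $\partial(G\setminus\{x\}) = \partial G \cup \{x\}$, so
\[
s_{G\setminus\{x\}}(y,z) = \max\Bigl\{s_G(y,z),\; \frac{|y-z|}{|y-x|+|x-z|}\Bigr\}.
\]
To bound the new competitor, pick $w_0 \in \partial G$ with $|x-w_0|$ equal to (or arbitrarily close to) $d_G(x)$; then $|y-w_0| \leq |y-x|+d_G(x) \leq \frac{1+t}{t}|y-x|$, and analogously for $z$, whence
\[
\frac{|y-z|}{|y-x|+|x-z|} \leq \frac{1+t}{t}\cdot\frac{|y-z|}{|y-w_0|+|w_0-z|} \leq \frac{1+t}{t}\,s_G(y,z),
\]
so once again $c_s = (1+t)/t$ suffices.

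Finally, to verify that these constants cannot be improved and thus necessarily $c_j, c_p, c_s \to 2$ as $t \to 1$, I would test the bounds on the half-space $G = \{w \in \mathbb{R}^n : w_n > -1\}$ with $x=0$ (so $d_G(x)=1$), taking the admissible pair $y = t e_n + \epsilon e_1$, $z = t e_n - \epsilon e_1$ and letting $\epsilon \to 0^+$. Direct computation shows $d_G(y) = d_G(z) = 1+t$ while $d_{G\setminus\{x\}}(y), d_{G\setminus\{x\}}(z) \to t$, and from the half-space formula in Remark \ref{2.10} one gets $s_G(y,z) \sim \epsilon/(1+t)$ whereas $|y-z|/(|y-x|+|x-z|) \sim \epsilon/t$. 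All three ratios $m_{G\setminus\{x\}}(y,z)/m_G(y,z)$ then approach $(1+t)/t$, which tends to $2$ as $t \to 1$. The main subtlety is choosing the correct auxiliary boundary point $w_0$ in the $s$-metric argument; the remaining estimates are routine.
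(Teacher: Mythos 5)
Your proposal is correct, and it takes a genuinely different and more economical route than the paper. The paper proves each of the three bounds by an exhaustive case analysis on which of $d_G(y),d_G(z)$ are altered by deleting $x$, invoking Lemma \ref{loglemma} together with limiting arguments, and arrives at $c_j=2/t$, $c_p=(t+1)/t$, $c_s=1+1/t$. You instead extract a single uniform estimate, $d_{G\setminus\{x\}}(w)=\min\{d_G(w),|w-x|\}\ge \tfrac{t}{1+t}\,d_G(w)$ for all admissible $w$ (via the $1$-Lipschitz property of $d_G$ and $|w-x|\ge t\,d_G(x)$), and push it through each definition: Bernoulli's inequality $(1+u)^a\ge 1+au$ handles $j$, the elementary inequality $(a+b)/(a+\alpha b)\le 1/\alpha$ handles $p$, and for $s$ your auxiliary nearest boundary point $w_0$ is in substance the same competitor the paper uses (the paper writes the bound additively as $1+2d_G(x)/(|y-x|+|x-z|)\le 1+1/t$, you write it multiplicatively; the constants agree). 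This buys you three things: a unified constant $c_j=c_p=c_s=(1+t)/t$, which for $j$ improves the paper's $2/t$; a shorter argument with no case splitting; and a sharper optimality statement, since your half-space example with $y,z\to te_n$ realizes the ratio $(1+t)/t$ for \emph{each fixed} $t$, whereas the paper's examples in $\mathbb{R}^n\setminus\{0\}$ only establish optimality in the double limit $t\to 1$, $a\to 0$. Both arguments yield $c_j,c_p,c_s\to 2$ as $t\to1$, as required.
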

\begin{proof}
  We denote $G' = G \setminus \{ x \}$ and will find an upper bound for $\frac{m_{G'}(y,z)}{m_G(y,z)}$.

  We consider first the case $m=j$. If $d_G(y) = d_{G'}(y)$ and $d_G(z) = d_{G'}(z)$, then there is nothing to prove as $j_{G'}(y,z) = j_G(y,z)$ and we can choose $c_j=1$. We consider next two cases: $d_G(y) \neq d_{G'}(y)$, $d_G(z) = d_{G'}(z)$ and $d_G(y) \neq d_{G'}(y)$, $d_G(z) \neq d_{G'}(z)$.

  Let us assume $d_G(y) \neq d_{G'}(y)$ and $d_G(z) = d_{G'}(z)$ (or by symmetry we could as well assume $d_G(y) = d_{G'}(y)$ and $d_G(z) \neq d_{G'}(z)$). Now
  \begin{eqnarray*}
    \frac{j_{G'}(y,z)}{j_G(y,z)} & = & \frac{\log \left( 1+\frac{|y-z|}{\min \{ d_{G'}(y),d_{G'}(z) \}} \right)}{\log \left( 1+\frac{|y-z|}{\min \{ d_G(y),d_G(z) \}} \right)} =\frac{\log \left( 1+\frac{|y-z|}{\min \{ |y-x|,d_G(z) \}} \right)}{\log \left( 1+\frac{|y-z|}{\min \{ d_G(y),d_G(z) \}} \right)}.
  \end{eqnarray*}
  Let us assume that $d_G(z) \leq d_G(y)$. If $d_G(z) \leq |y-x|$ then $j_{G'}(y,z) = j_G(y,z)$ and there is nothing to prove. If $d_G(z) \geq |y-x|$ then
  \begin{eqnarray*}
    \frac{j_{G'}(y,z)}{j_G(y,z)} & = & \frac{\log \left( 1+\frac{|y-z|}{|y-x|} \right)}{\log \left( 1+\frac{|y-z|}{d_G(z)} \right)} \leq \frac{\log \left( 1+\frac{|y-z|}{td_G(x)} \right)}{\log \left( 1+\frac{|y-z|}{d_G(z)} \right)}\\
    & \leq & \frac{\log \left( 1+\frac{|y-z|}{td_G(x)} \right)}{\log \left( 1+\frac{|y-z|}{d_G(y)} \right)}\leq \frac{\log \left( 1+\frac{|y-z|}{td_G(x)} \right)}{\log \left( 1+\frac{|y-z|}{|y-x|+d_G(x)} \right)}.
  \end{eqnarray*}
If $|x-y| \leq d_G(x)$ then we have by Lemma \ref{loglemma}
\begin{eqnarray*}
    \frac{j_{G'}(y,z)}{j_G(y,z)} & \leq & \frac{\log \left( 1+\frac{|y-z|}{t d_G(x)} \right)}{\log \left( 1+\frac{|y-z|}{2d_G(x)} \right)} \leq \lim_{|y-z|/d_G(x) \to 0} \frac{\log \left( 1+\frac{|y-z|}{t d_G(x)} \right)}{\log \left( 1+\frac{|y-z|}{2d_G(x)} \right)}\\
    & \leq & \lim_{|y-z|/d_G(x) \to 0} \frac{2+\frac{|y-z|}{d_G(x)}}{t+\frac{|y-z|}{d_G(x)}} = \frac{2}{t}.
  \end{eqnarray*}
If $|x-y|\geq d_G(x)$ again by Lemma \ref{loglemma}
\begin{eqnarray*}
    \frac{j_{G'}(y,z)}{j_G(y,z)} & \leq & \frac{\log \left( 1+\frac{|y-z|}{|y-x|} \right)}{\log \left( 1+\frac{|y-z|}{2|y-x|} \right)} \leq \lim_{|y-z|/|y-x| \to 0} \frac{\log \left( 1+\frac{|y-z|}{|y-x|} \right)}{\log \left( 1+\frac{|y-z|}{2|y-x|} \right)}\\
    & \leq & \lim_{|y-z|/|y-x| \to 0} \frac{2+\frac{|y-z|}{|y-x|}}{1+\frac{|y-z|}{|y-x|}} = 2.
  \end{eqnarray*}
  Let us then assume $d_G(y) \leq d_G(z)$. Now $d_G(y) \neq d_{G'}(y)$ implies $|y-x| < d_G(y)$ and thus
  \begin{equation}\label{jmetricformula}
    \frac{j_{G'}(y,z)}{j_G(y,z)} = \frac{\log \left( 1+\frac{|y-z|}{|y-x|} \right)}{\log \left( 1+\frac{|y-z|}{d_G(y)} \right)} \leq \frac{\log \left( 1+\frac{|y-z|}{|y-x|} \right)}{\log \left( 1+\frac{|y-z|}{|y-x|+d_G(x)} \right)}.
  \end{equation}
  If $|x-y| \leq d_G(x)$ we have by \eqref{jmetricformula} and Lemma \ref{loglemma}
  \begin{eqnarray*}
    \frac{j_{G'}(y,z)}{j_G(y,z)} & \leq & \frac{\log \left( 1+\frac{|y-z|}{t d_G(x)} \right)}{\log \left( 1+\frac{|y-z|}{2d_G(x)} \right)} \leq \lim_{|y-z|/d_G(x) \to 0} \frac{\log \left( 1+\frac{|y-z|}{t d_G(x)} \right)}{\log \left( 1+\frac{|y-z|}{2d_G(x)} \right)}\\
    & \leq & \lim_{|y-z|/d_G(x) \to 0} \frac{2+\frac{|y-z|}{d_G(x)}}{t+\frac{|y-z|}{d_G(x)}} = \frac{2}{t}.
  \end{eqnarray*}
  If $d_G(x) \leq |x-y|$ we have by \eqref{jmetricformula} and Lemma \ref{loglemma}
  \begin{eqnarray*}
    \frac{j_{G'}(y,z)}{j_G(y,z)} & \leq & \frac{\log \left( 1+\frac{|y-z|}{|y-x|} \right)}{\log \left( 1+\frac{|y-z|}{2|y-x|} \right)} \leq \lim_{|y-z|/|y-x| \to 0} \frac{\log \left( 1+\frac{|y-z|}{|y-x|} \right)}{\log \left( 1+\frac{|y-z|}{2|y-x|} \right)}\\
    & \leq & \lim_{|y-z|/|y-x| \to 0} \frac{2+\frac{|y-z|}{|y-x|}}{1+\frac{|y-z|}{|y-x|}} = 2.
  \end{eqnarray*}

  Let us then assume $d_G(y) \neq d_{G'}(y)$ and $d_G(z) \neq d_{G'}(z)$. Now we may assume by symmetry that $|y-x| \leq |z-x|$ and thus
  \begin{eqnarray*}
    \frac{j_{G'}(y,z)}{j_G(y,z)} & = & \frac{\log \left( 1+\frac{|y-z|}{|y-x|} \right)}{\log \left( 1+\frac{|y-z|}{\min \{ d_G(y),d_G(z) \}} \right)} \leq \frac{\log \left( 1+\frac{|y-z|}{|y-x|} \right)}{\log \left( 1+\frac{|y-z|}{|y-x| + d_G(x)} \right)}
  \end{eqnarray*}
  and this is exactly the same as \eqref{jmetricformula} so we know that $$\frac{j_{G'}(y,z)}{j_G(y,z)}\leq \frac{2}{t}.$$

  Putting all this together gives us $c_j = \frac{2}{t}$.

  Let now $m = p$. If $d_G(y) = d_{G'}(y)$ and $d_G(z) = d_{G'}(z)$, then there is nothing to prove as $p_{G'}(y,z) = p_G(y,z)$ and we can choose $c_p=1$. We consider next two cases: $d_G(y) \neq d_{G'}(y)$, $d_G(z) = d_{G'}(z)$ and $d_G(y) \neq d_{G'}(y)$, $d_G(z) \neq d_{G'}(z)$.

  Let us assume $d_G(y) \neq d_{G'}(y)$ and $d_G(z) = d_{G'}(z)$ (or by symmetry we could as well assume $d_G(y) = d_{G'}(y)$ and $d_G(z) \neq d_{G'}(z)$). Now
  \begin{eqnarray*}
    \frac{p_{G'}^2(y,z)}{p_G^2(y,z)} & = & \frac{|y-z|^2+4d_G(y)d_G(z)}{|y-z|^2+4d_{G'}(y)d_{G'}(z)} = \frac{|y-z|^2+4d_G(y)d_G(z)}{|y-z|^2+4 |y-x| d_G(z)}\\
    & \leq & \frac{|y-z|^2+4 (|x-y|+d_G(x)) d_G(z)}{|y-z|^2+4 |y-x| d_{G}(z)}\\
    & = & 1+ \frac{4d_G(x)d_G(z)}{|y-z|^2+4 |y-x| d_{G}(z)} \leq 1+ \frac{4d_G(x)d_G(z)}{0+4td_{G}(x)d_{G}(z)}\\
    & = & 1+\frac{1}{t}.
  \end{eqnarray*}

  Let us then assume $d_G(y) \neq d_{G'}(y)$ and $d_G(z) \neq d_{G'}(z)$. Now
  \begin{eqnarray*}
    \frac{p_{G'}^2(y,z)}{p_G^2(y,z)} & = & \frac{|y-z|^2+4d_G(y)d_G(z)}{|y-z|^2+4d_{G'}(y)d_{G'}(z)} = \frac{|y-z|^2+4d_G(y)d_G(z)}{|y-z|^2+4 |y-x| |z-x|}\\
    & \leq & \frac{|y-z|^2+4 (|x-y|+d_G(x)) (|x-z|+d_G(x))}{|y-z|^2+4 |y-x| |z-x|}\\
    & = & 1+ \frac{4(|x-y|d_G(x) + |x-z|d_G(x) + d_G(x)^2)}{|y-z|^2+4 |y-x| |z-x|}\\
    &\leq & 1+\frac{ 4( |x-y|d_G(x) + |x-z|d_G(x) + d_G(x)^2) }{4 |y-x| |z-x|}\\
    &=& 1+\frac{|x-y|d_G(x)}{|y-x| |z-x|} +\frac{|x-z|d_G(x)}{|y-x| |z-x|}+\frac{d_G(x)^2}{|y-x| |z-x|}\\
    &\leq & 1+\frac{|x-y|d_G(x)}{|y-x| t d_G(x)}+\frac{|x-z|d_G(x)}{t d_G(x) |z-x|}+\frac{d_G(x)^2}{t d_G(x) t d_G(x)}\\
    & = & 1+\frac{2}{t}+\frac{1}{t^2} = 1+\frac{2t+1}{t^2}.
  \end{eqnarray*}
Combining the cases we obtain $c_p = \frac{t+1}{t}$.

Let us finally consider the case $m = s$. Now
\[
  \frac{s_{G'}(y,z)}{s_G(y,z)} = \frac{\inf_{u \in \partial G} |y-u|+|u-z|}{\inf_{u \in \partial G'} |y-u|+|u-z|}
\]
and if the infimum in the denominator is obtained at a point $u \in \partial G$, then there is nothing to prove as $s_{G'}(y,z) = s_G(y,z)$ and we can choose $c_s=1$. If this is not the case, then
\begin{eqnarray*}
  \frac{s_{G'}(y,z)}{s_G(y,z)} & = & \frac{\inf_{u \in \partial G} |y-u|+|u-z|}{\inf_{u \in \partial G'} |y-u|+|u-z|} = \frac{\inf_{u \in \partial G} |y-u|+|u-z|}{|y-x|+|x-z|}\\
  & \leq & \frac{|x-y|+d_G(x)+|x-z|+d_G(x)}{|y-x|+|x-z|} = 1+\frac{2d_G(x)}{|y-x|+|x-z|}\\
  & \leq & 1+\frac{2d_G(x)}{2td_G(x)} = 1+\frac{1}{t}
\end{eqnarray*}
and we can choose $c_s = 1+\frac{1}{t}$.

We see easily that $c_j, c_p, c_s \to 2$ as $t \to 1$. We show next that the constants $c_j$, $c_p$ and $c_s$ are best possible. In all three cases we consider $G=\mathbb{R}^n \setminus \{ 0 \}$.

We start with the case $m=j$. Let $a>0$. For points $x=e_1$, $y=(1+t)e_1$ and $z=(1+t+a)e_1$ we have
\[
  \frac{j_{G'}(y,z)}{j_G(y,z)} = \frac{\log \left( 1+\frac{a}{t} \right)}{\log \left( 1+\frac{a}{1+t} \right)}
  \]
  and
  \[
     \frac{j_{G'}(y,z)}{j_G(y,z)} \to \frac{\log (1+a)}{\log (1+\frac{a}{2})}
   \]
   as $t \to 1$. The asymptotic behavior is clear since
   \[
     \frac{\log (1+a)}{\log (1+\frac{a}{2})} \to 2
    \]
as $a \to 0$.

We next consider the case $m=p$. Let $a \in (0,t]$. For points $x=e_1$, $y=(1+\sqrt{t^2-a^2})e_1+a e_2$ and $z=(1+\sqrt{t^2-a^2})e_1-a e_2$ we have $|y-z|=2a$ and
\[
  \frac{p^2_{G'}(y,z)}{p^2_G(y,z)} = \frac{|y-z|^2+4d_G(y)d_G(z)}{|y-z|^2+4d_{G'}(y)d_{G'}(z)} = \frac{4a^2+4 \left( a^2+ \left( 1+\sqrt{t^2-a^2} \right)^2 \right)}{4a^2+4t^2}.
\]
Now
\[
  \frac{p^2_{G'}(y,z)}{p^2_G(y,z)} \to \frac{4a^2+4 \left( a^2+ \left( 1+\sqrt{1-a^2} \right)^2 \right)}{4a^2+4} = \frac{4a^2+8+8 \sqrt{1-a^2}}{4a^2+4}
\]
as $t \to 1$ and
\[
  \frac{4a^2+8+8 \sqrt{1-a^2}}{4a^2+4} \to 4
\]
as $a \to 0$.

We finally consider the case $m=s$. Let $a \in (0,t]$. For points $x=e_1$, $y=(1+\sqrt{t^2-a^2})e_1+a e_2$ and $z=(1+\sqrt{t^2-a^2})e_1-a e_2$ we have $|y-z|=2a$ and
\begin{eqnarray*}
  \frac{s_{G'}(y,z)}{s_G(y,z)} & = & \frac{\frac{2a}{2t}}{\frac{2a}{2 \sqrt{a^2+ \left( 1+\sqrt{t^2-a^2} \right)^2 }}} = \frac{\sqrt{a^2+ \left( 1+\sqrt{t^2-a^2} \right)^2 }}{t}\\
  & \to & \sqrt{a^2+ \left( 1+\sqrt{1-a^2} \right)^2 }
\end{eqnarray*}
as $t \to 1$ and
\[
  \sqrt{a^2+ \left( 1+\sqrt{1-a^2} \right)^2 } = \sqrt{2+2\sqrt{1-a^2}} \to 2
\]
as $a \to 0$.
\end{proof}

We show next that Theorem \ref{riku4} does not work for the visual angle metric $v$. Let $G = \mathbb{R}^n \setminus \{ 0 \}$ and $x=e_1$. Now for $y = \frac{e_1}{2}$ and $z=2e_1$ we have $v_G(y,z) = 0$ an $v_{G \setminus \{ x \}}(y,z) = \pi$.

\medskip

{\bf Proof of Theorem \ref{mainB}}
  The assertion follows from Theorems \ref{riku3} and \ref{riku4}.\hfill $\square$

\section{Smoothness of $s$-disks with small radii}


In this section, we will consider the smoothness of triangular ratio metric balls in equilateral triangles and rectangles in $\mathbb{R}^2$.
Let $T_{\frac{\pi}{6},2}$ denote the equilateral triangle with vertices $(0,0)$, $(\sqrt{3},1)$, $(\sqrt{3},-1)$, and $R_{a,b}$ denote the rectangle with vertices $(a,b)$, $(a,-b)$, $(-a,b)$, $(-a,-b)$, where $a\geq b>0$.

\begin{figure}[h]
\begin{center}
     \includegraphics[width=8cm]{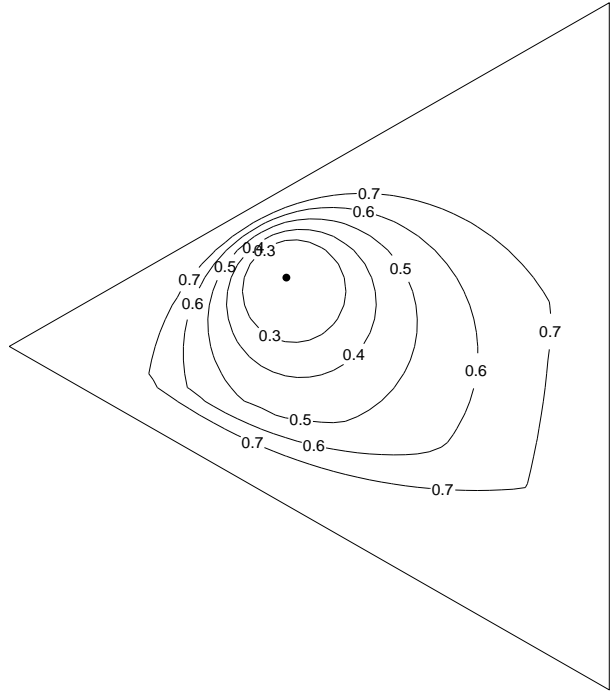}
\caption{Triangular ratio metric balls $ B_{s_{G}}(x,r) $ in $T_{\frac{\pi}{6},2}$. }
\end{center}
    \end{figure}

\begin{figure}[h]
\begin{center}
     \includegraphics[width=10cm]{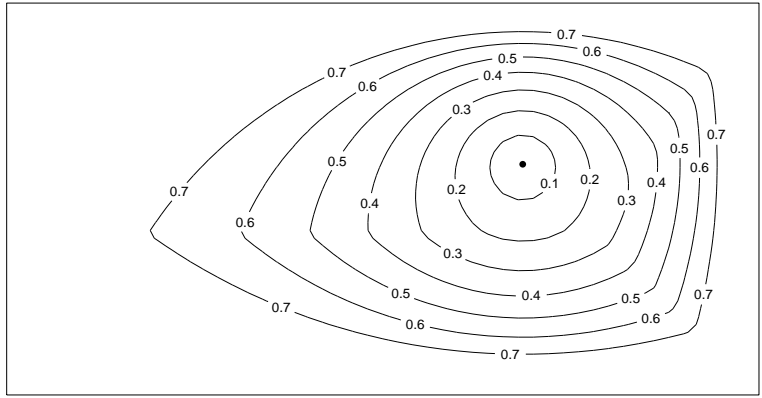}
\caption{Triangular ratio metric balls $ B_{s_{G}}(x,r) $ in $R_{a,b}$. }
\end{center}
    \end{figure}
\medskip

\begin{lem} \label{riku2}
  Let $P \subset \mathbb{R}^2$ be a polygon with inner angles less than or equal to $\pi$ and suppose that there are half planes $H_1$, $H_2$, \dots , $H_n$ such that
\[
  P = \bigcap_{i=1}^n H_i.
\]
  Then for $x \in P$ and $r > 0$ we have
\[
  B_{s_P}(x,r) = \bigcap_{i=1}^n B_{s_{H_i}}(x,r).
\]
\end{lem}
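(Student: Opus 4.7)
The plan is to prove the two inclusions separately, with the easy one coming from the domain monotonicity property (a) stated in Section 3 and the harder one from the observation that $\partial P\subset\bigcup_i\partial H_i$.

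First I would establish the inclusion $B_{s_P}(x,r)\subset \bigcap_{i=1}^n B_{s_{H_i}}(x,r)$. Since $P=\bigcap_i H_i$, we have $P\subset H_i$ for each $i$, so by the monotonicity property (a) in Section 3,
\[
s_{H_i}(x,y)\le s_P(x,y)\quad\text{for all }x,y\in P.
\]
Consequently, if $y\in B_{s_P}(x,r)$, then $s_{H_i}(x,y)\le s_P(x,y)<r$ for each $i$, giving $y\in B_{s_{H_i}}(x,r)$ for every $i$. This is essentially automatic.

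Next I would prove the reverse inclusion. Fix $y\in\bigcap_{i=1}^n B_{s_{H_i}}(x,r)$; I want to show $s_P(x,y)<r$. The key geometric observation is that since $P$ is the intersection of the closed half-planes $H_i$ (the convexity, i.e.\ the inner angles $\le\pi$, is automatic from being such an intersection), every boundary point of $P$ lies on some bounding line, i.e.\ $\partial P\subset\bigcup_{i=1}^n\partial H_i$. Therefore, for each $z\in\partial P$ we may select an index $i(z)$ with $z\in\partial H_{i(z)}$, and then
\[
\frac{|x-y|}{|x-z|+|z-y|}\le \sup_{w\in\partial H_{i(z)}}\frac{|x-y|}{|x-w|+|w-y|}=s_{H_{i(z)}}(x,y)<r.
\]
Taking the supremum over $z\in\partial P$ yields
\[
s_P(x,y)\le \max_{1\le i\le n} s_{H_i}(x,y)<r,
\]
so $y\in B_{s_P}(x,r)$, completing the proof.

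The only step that requires a moment's thought is the inclusion $\partial P\subset\bigcup_i\partial H_i$, which I would justify by noting that a point $z\in\partial P$ must fail to be in the interior of $P$, and since $\mathrm{int}\,P=\bigcap_i\mathrm{int}\,H_i$, there is some $i$ with $z\notin\mathrm{int}\,H_i$, forcing $z\in\partial H_i$ (because $z\in P\subset H_i$). No further difficulty is expected; this is essentially a direct bookkeeping argument once monotonicity and the boundary decomposition are in hand.
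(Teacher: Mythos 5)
Your argument is correct, and both inclusions are handled properly: monotonicity with respect to domains (property (a) of Section 3) gives $s_{H_i}(x,y)\le s_P(x,y)$ for $P\subset H_i$, which yields the forward inclusion, and the finite boundary decomposition $\partial P\subset\bigcup_{i=1}^n\partial H_i$ gives $s_P(x,y)\le\max_i s_{H_i}(x,y)$, which yields the reverse one (the maximum over finitely many values below $r$ stays below $r$, so there is no issue with the supremum). Your topological justification of $\partial P\subset\bigcup_i\partial H_i$ via $\mathrm{int}\,P=\bigcap_i\mathrm{int}\,H_i$ is sound for a finite intersection, and the point $y\in\bigcap_i B_{s_{H_i}}(x,r)$ automatically lies in $P$ since each metric ball is contained in its half-plane. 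The paper itself offers no argument at all: its proof consists of the single line ``Follows from \cite[Lemma 5.4]{hklv}.'' So you have supplied a self-contained, elementary proof where the authors rely on an external reference; your version has the advantage of making the paper independent of that citation and of exposing exactly which two facts (domain monotonicity and the boundary decomposition of a finite intersection) drive the identity, and it also makes visible that the hypothesis on inner angles is redundant once $P$ is written as an intersection of half-planes.
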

\begin{proof}
  Follows from \cite[Lemma 5.4]{hklv}.
\end{proof}

\medskip
{\bf Proof of Theorem \ref{thm1.8}}
Denote by the lines
$l_1: y_2=\frac{\sqrt{3}}{3}y_1$,
$l_2: y_2=-\frac{\sqrt{3}}{3}y_1$, and
$l_3: y_1=\sqrt{3}$.
For any point $x\in G=T_{\frac{\pi}{6},2}$ and $r\in (0,1)$, by Lemma \ref{riku2}, we have
$$B_{s_{G}}(x,r)=\cap_{i=1}^{3}B_{i},$$
where $B_{i}$ is the corresponding triangular ratio metric ball $B_{s_{G_{i}}}(x,r)$, and $G_{i}$ is the half plane with boundary line $l_{i}.$
By elementary computation, we have that
\begin{align*}
B_{1}:&\left\{ y: \left(y_{1}-\dfrac{(2-r^{2})x_{1}-\sqrt{3}r^{2}x_{2}}{2(1-r^{2})}\right)^{2}+\left(y_{2}-\dfrac{(2+r^{2})x_{2}-\sqrt{3}r^{2}x_{1}}{2(1-r^{2})}\right)^{2}\right.  \\
&\;\;\;\;\;\;\;\;\;\left.<\dfrac{r^{2}(x_{1}-\sqrt{3}x_{2})^{2}}{(1-r^{2})^{2}}\right\},
\end{align*}
\begin{align*}
B_{2}:&\left\{ y: \left(y_{1}-\dfrac{(2-r^{2})x_{1}+\sqrt{3}r^{2}x_{2}}{2(1-r^{2})}\right)^{2}+\left(y_{2}-\dfrac{(2+r^{2})x_{2}+\sqrt{3}r^{2}x_{1}}{2(1-r^{2})}\right)^{2}\right.  \\
&\;\;\;\;\;\;\;\;\;\left.<\dfrac{r^{2}(x_{1}+\sqrt{3}x_{2})^{2}}{(1-r^{2})^{2}}\right\},
\end{align*}
and
\begin{align*}
B_{3}:&\left\{ y: \left(y_{1}-\dfrac{x_{1}-2\sqrt{3}r^{2}+x_{1}r^{2}}{ 1-r^{2} }\right)^{2}+\left(y_{2}- x_{2} \right)^{2} <\dfrac{4r^{2}(x_{1}-\sqrt{3} )^{2}}{(1-r^{2})^{2}}\right\}.
\end{align*}
Hence, $B_{s_{G}}(x,r)$ is smooth if and only if $B_{s_{G}}(x,r)$ is one of the above three balls. It is known that $\mathbb{B}^2(a,r_1)\subset \mathbb{B}^2(b,r_2)$ is equivalent to $|a-b|\leq r_2-r_1$.
Then, by calculations, we have that for any point $x=(x_1,x_2
)\in G$, $B_{1}\subset B_{2}$ and $B_{1}\subset B_{3}$ is equivalent to
$$0<r\leq \dfrac{2x_{2}}{\sqrt{x_{1}^{2}+x_{2}^{2}}},\;\;\text{and}\;\;0<r\leq \dfrac{x_{2}-\sqrt{3}x_{1}+2}{\sqrt{(\sqrt{3}-x_{1})^{2}+(1-x_{2})^{2}}};$$
$B_{2}\subset B_{1}$ and $B_{2}\subset B_{3}$ is equivalent to
$$0<r\leq -\dfrac{2x_{2}}{\sqrt{x_{1}^{2}+x_{2}^{2}}},\;\;\text{and}\;\;0<r\leq \dfrac{-x_{2}-\sqrt{3}x_{1}+2}{\sqrt{(\sqrt{3}-x_{1})^{2}+(1+x_{2})^{2}}};$$
$B_{3}\subset B_{1}$ and $B_{3}\subset B_{2}$ is equivalent to
 $$0<r\leq \dfrac{\sqrt{3}x_{1}-x_{2}-2}{\sqrt{(\sqrt{3}-x_{1})^{2}+(1-x_{2})^{2}}}\;\;\text{and}\;\;0<r\leq \dfrac{\sqrt{3}x_{1}+x_{2}-2}{\sqrt{(\sqrt{3}-x_{1})^{2}+(1+x_{2})^{2}}}.$$
That is for any point $x\in T_{\frac{\pi}{6},2}$, $0<r<1$, $B_{s_{G}}(x,r)$ is smooth if and only if
$$0<r\leq \min \left \{\dfrac{2|x_{2}|}{\sqrt{x_{1}^{2}+x_{2}^{2}}},\dfrac{|x_2|-\sqrt{3}x_1+2 }{\sqrt{(x_1-\sqrt{3})^{2}+(1-|x_2|)^{2}}} \right\},$$ or
$$0<r\leq  \dfrac{\sqrt{3}x_1-2-|x_{2}|}{\sqrt{(x_1-\sqrt{3})^{2}+(1-|x_2|)^{2}}} .$$
Obviously, for $x_2=0$ and $0<x_1 \leq \dfrac{2\sqrt{3}}{3}$, or $|x_2|=\sqrt{3}x_1-2$, $ B_{s_{G}}(x,r) $ cannot be smooth.

For the case $G=R_{a,b}$, let $l_1: y_{2}=b$, $l_2: y_{1}=a$, $l_3: y_{2}=-b$,
and $l_4: y_{1}=-a$.
For any point $x \in R_{a,b}$, and $r\in (0,1)$, it follows from Lemma \ref{riku2} that $$B_{s_{G}}(x,r)=\cap_{i=1}^{4}B_{i},$$
where $B_{i}$ is the corresponding triangular ratio metric ball $B_{s_{G_{i}}}(x,r)$, and $G_{i}$ is the half plane with boundary line $l_{i}$.
For any point $x\in R_{a,b}$, it follows from elementary computation that
\begin{align*}
B_{1}:&\left\{ y: \left(y_{1}-x_{1}\right)^{2}+\left(y_{2}-\dfrac{x_{2}+r^{2}x_{2}-2br^{2}}{1-r^{2}}\right)^{2}<\dfrac{4r^{2}(b-x_{2})^{2}}{(1-r^{2})^{2}}\right\},
\end{align*}
\begin{align*}
B_{2}:&\left\{ y: \left(y_{1}-\dfrac{x_{1}+r^{2}x_{1}-2ar^{2}}{1-r^{2}}\right)^{2}+\left(y_{2}-x_{2}\right)^{2}<\dfrac{4r^{2}(a-x_{1})^{2}}{(1-r^{2})^{2}}\right\},
\end{align*}
\begin{align*}
B_{3}:&\left\{ y: \left(y_{1}-x_{1}\right)^{2}+\left(y_{2}-\dfrac{x_{2}+r^{2}x_{2}+2br^{2}}{1-r^{2}}\right)^{2}<\dfrac{4r^{2}(b+x_{2})^{2}}{(1-r^{2})^{2}}\right\},
\end{align*}
and
\begin{align*}
B_{4}:&\left\{ y: \left(y_{1}-\dfrac{x_{1}+r^{2}x_{1}+2ar^{2}}{1-r^{2}}\right)^{2}+\left(y_{2}-x_{2}\right)^{2}<\dfrac{4r^{2}(a+x_{1})^{2}}{(1-r^{2})^{2}}\right\}.
\end{align*}
For $1\leq i\leq 4$, let $R_{i}$ denote the radius of $B_{i}$.
If $x_{2}>0$, then $R_{3}\geq R_{1}$. By calculations,
$B_{s_{G}}(x,r)=B_{1}$ is equivalent to
$$0<r\leq \min\left\{\dfrac{x_{2}}{b},\dfrac{(a-x_{1})-(b-x_{2})}{\sqrt{(a-x_{1})^{2}+(b-x_{2})^{2}}},\dfrac{(a+x_{1})-(b-x_{2})}{\sqrt{(a+x_{1})^{2}+(b-x_{2})^{2}}}\right\}.$$
If $x_{2}<0$, then $R_{1}\geq R_{3}$. By calculations,
$B_{s_{G}}(x,r)=B_{3}$ is equivalent to
$$0<r\leq \min\left\{-\dfrac{x_{2}}{b},\dfrac{(a-x_{1})-(b+x_{2})}{\sqrt{(a-x_{1})^{2}+(b+x_{2})^{2}}},\dfrac{(a+x_{1})-(b+x_{2})}{\sqrt{(a+x_{1})^{2}+(b+x_{2})^{2}}}\right\}.$$
If $x_{1}>0$, then $R_{4}\geq R_{2}$. By calculations,
$B_{s_{G}}(x,r)=B_{2}$ is equivalent to
$$0<r\leq \min\left\{\dfrac{x_{1}}{a},\dfrac{(b-x_{2})-(a-x_{1})}{\sqrt{(a-x_{1})^{2}+(b-x_{2})^{2}}},\dfrac{(b+x_{2})-(a-x_{1})}{\sqrt{(a-x_{1})^{2}+(b+x_{2})^{2}}}\right\}.$$
If $x_{1}<0$, then $R_{2}\geq R_{4}$. By calculations,
$B_{s_{G}}(x,r)=B_{4}$ is equivalent to
$$0<r\leq \min\left\{-\dfrac{x_{1}}{a},\dfrac{(b-x_{2})-(a+x_{1})}{\sqrt{(a+x_{1})^{2}+(b-x_{2})^{2}}},\dfrac{(b+x_{2})-(a+x_{1})}{\sqrt{(a+x_{1})^{2}+(b+x_{2})^{2}}}\right\}.$$
That is, for any point $x\in R_{a,b}$, $0<r<1$, $B_{s_{G}}(x,r)$ is smooth if and only if
$$0<r\leq \min \left \{\dfrac{|x_{2}|}{b},\dfrac{(a-|x_1|)-(b-|x_2|)}{\sqrt{(a-|x_1|)^{2}+(b-|x_2|)^{2}}} \right\},$$ or
$$0<r\leq \min \left \{\dfrac{|x_{1}|}{a},\dfrac{(b-|x_2|)-(a-|x_1|)}{\sqrt{(a-|x_1|)^{2}+(b-|x_2|)^{2}}} \right\}.$$
Obviously, for $x_2=0$ and $a-|x_1|\geq b$, or $a-|x_1|=b-|x_2|$,
$ B_{s_{G}}(x,r) $ cannot be smooth.
$\square$


\section{Quasiregular maps and triangular ratio metric}

In this section our goal is to summarize some basic facts  about quasiconformal mappings, following closely \cite{avv}, and \cite{vu},
and to prove Theorems \ref{1m} and \ref{2m}.
We assume that the reader is familiar with the basics of this theory.
Here we adopt the standard definition of $K-$quasiconformality and $K-$quasiregularity from J. V\"{a}is\"{a}l\"{a}'s book \cite{v1}
and from \cite{vu}, respectively.
The first result is a quasiregular counterpart of the
Schwarz lemma. Observe that the result is asymptotically sharp when $K\to 1\,.$

The \emph{Gr\"otzsch ring domain} $R_{G,n}(s)$, $s>1$, is a doubly
connected domain with complementary components
$(\overline{\Bn},[se_1,\infty))$. For its capacity we write

\[
   \gamma_n(s)={\rm cap}R_{G,n}(s)=M(\Delta(\overline{\Bn},[se_1,\infty])).
\]

For $K>0$ we define an increasing homeomorphism
$\varphi_{K,n}:[0,1]\to[0,1]$ with $\varphi_{K,n}(0)=0$,
$\varphi_{K,n}(1)=1$ and
\beq\label{phi}
   \varphi_{K,n}(r)=\dfrac{1}{\gamma_n^{-1}(K\gamma_n(1/r))},\quad 0<r<1.
\eeq

The following important estimates are well known \cite[pp.98-99]{vu}
\beq\label{bd4phi}
r^\alpha\leq\varphi_{K,n}(r)\leq\lambda_n^{1-\alpha}r^\alpha\leq2^{1-1/K}Kr^\alpha,\quad
\alpha=K^{1/(1-n)}\, ,
\eeq
\beq\label{bd4phi2}
2^{1-K}K^{-K}r^\beta\leq\lambda_n^{1-\beta}r^\beta\leq\varphi_{1/K,n}(r)\leq
r^\beta,\quad \beta=1/\alpha,
\eeq
where $K\geq1, r\in(0,1)\,,$ and the constant $\lambda_n\in[4,2e^{n-1})$
is the so-called \emph{Gr\"otzsch ring constant}. In particular,
$\lambda_2=4$.

\begin{thm}\label{qc}
Let $G, D$ be either $\mathbb{B}^n$ or $\mathbb{H}^n$ and $f:G\rightarrow fG\subset D$ be a non-constant $K-$quasiregular mapping and let $\alpha=K_I(f)^{1/(1-n)}.$ Then

\begin{eqnarray*}
  \tanh \left(\frac{1}{2}\rho_{D}(f(x),f(y)) \right) & \leq & \varphi_{K,n} \left( \tanh \left( \frac{1}{2}\rho_G(x,y) \right) \right)\\
  & \leq & \lambda_n^{1-\alpha} \left( \tanh \left( \frac{1}{2}\rho_G(x,y) \right) \right)^{\alpha},
\end{eqnarray*}
for all $x, y\in G$.
\end{thm}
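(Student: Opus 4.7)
The plan is to reduce everything to the case $G=D=\mathbb{B}^n$ and then invoke the standard quasiregular Schwarz lemma, after which the second inequality is immediate from the upper estimate in \eqref{bd4phi}.

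First I would handle the reduction. Fix a M\"obius transformation $h_G: \mathbb{B}^n \to G$ and a M\"obius transformation $h_D: D \to \mathbb{B}^n$. The map $\tilde f = h_D \circ f \circ h_G : \mathbb{B}^n \to \mathbb{B}^n$ is $K$-quasiregular because M\"obius transformations are conformal in $\mathbb{R}^n$ and therefore $1$-quasiconformal, so they do not change the inner dilatation. Moreover, M\"obius transformations between hyperbolic spaces are isometries, i.e.\ $\rho_{\mathbb{B}^n}(h_G^{-1}(x),h_G^{-1}(y))=\rho_G(x,y)$ and $\rho_{\mathbb{B}^n}(\tilde f(a),\tilde f(b))=\rho_D(f(h_G(a)),f(h_G(b)))$. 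Hence it suffices to prove the theorem for $G=D=\mathbb{B}^n$.

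Next I would apply the quasiregular Schwarz lemma for the unit ball (see e.g.\ \cite[Theorem 11.2]{vu} or the equivalent statement \cite[Theorem 16.2]{avv}), which states that for any non-constant $K$-quasiregular map $g:\mathbb{B}^n\to\mathbb{B}^n$ and all $x,y\in\mathbb{B}^n$,
\[
\tanh\!\left(\tfrac{1}{2}\rho_{\mathbb{B}^n}(g(x),g(y))\right)\leq \varphi_{K,n}\!\left(\tanh\!\left(\tfrac{1}{2}\rho_{\mathbb{B}^n}(x,y)\right)\right).
\]
Applying this to $\tilde f$ and translating back via the hyperbolic isometries $h_G, h_D$ yields the first inequality in the stated theorem. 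The second inequality is then obtained directly from \eqref{bd4phi}, which gives $\varphi_{K,n}(r)\leq \lambda_n^{1-\alpha} r^\alpha$ with $\alpha=K^{1/(1-n)}$, applied to $r=\tanh(\rho_G(x,y)/2)\in[0,1]$.

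The main potential obstacle is bookkeeping the dilatation invariance under M\"obius composition and verifying that the quoted Schwarz lemma applies to the quasiregular (non-injective) case rather than only the quasiconformal one; this, however, is exactly the content of \cite[Theorem 11.2]{vu}, which is stated in the required generality using the inner dilatation $K_I(f)$. Beyond this, the argument is a direct application of known estimates for $\varphi_{K,n}$, so no further work is needed.
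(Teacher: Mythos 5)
Your argument is correct, and it reaches the same endpoint as the paper but by a slightly different route. The paper does not conjugate the map: it works directly in the general setting $G,D\in\{\mathbb{B}^n,\mathbb{H}^n\}$ at the level of the modulus metric, first extending the key identity $\mu_{\mathbb{B}^n}(x,y)=\gamma_n\bigl(1/\tanh(\rho_{\mathbb{B}^n}(x,y)/2)\bigr)$ to $\mathbb{H}^n$ via M\"obius invariance of $\mu$ and $\rho$, and then observing that the proof of \cite[Theorem 11.2]{vu} (which rests on that identity together with the quasi-invariance of $\mu$ under quasiregular maps) goes through verbatim. You instead transport the \emph{map}: pre- and post-compose $f$ with M\"obius transformations to reduce to a $K$-quasiregular self-map of $\mathbb{B}^n$, cite \cite[Theorem 11.2]{vu} as a black box, and translate back using the fact that these M\"obius maps are hyperbolic isometries that leave $K_I$ unchanged. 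Your version is cleaner in that it never needs to open up the internals of the cited proof or say anything about $\mu_{\mathbb{H}^n}$; the paper's version has the minor advantage of recording the formula $\mu_{\mathbb{H}^n}(x,y)=\gamma_n\bigl(1/\tanh(\rho_{\mathbb{H}^n}(x,y)/2)\bigr)$, which is of independent interest. The deduction of the second inequality from \eqref{bd4phi} is identical in both treatments. No gaps.
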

\begin{proof}
Recall that the proof in \cite[Theorem 11.2]{vu} for the case $G= D= \mathbb{B}^n$ was based on the formula
\beq\label{5.5}
\mu_{\mathbb{B}^n}(x,y)=\gamma_n\left(\frac{1}{\tanh{\frac{\rho_{\mathbb{B}^n}(x,y)}{2}}}\right),\quad x, y\in\mathbb{B}^n.
\eeq
and the transformation rule of the metric $\mu_{\mathbb{B}^n}$ under quasiregular maps. The same proof also works for the present general case as soon as we prove that the formula \eqref{5.5} also holds for the case of $\mathbb{H}^n$.
For this purpose we use the invariance of $\mu_{\mathbb{B}^n}$ under a M\"obius transformation $h:\mathbb{H}^n\rightarrow \mathbb{B}^n$ to conclude by \eqref{5.5} that for $x, y \in \mathbb{H}^n$
\bequu
\mu_{\mathbb{H}^n}(x,y) &=& \mu_{\mathbb{B}^n}(h(x),h(y))\\
&=& \gamma_n\left(\frac{1}{\tanh{\frac{\rho_{\mathbb{B}^n}(h(x),h(y))}{2}}}\right)\\
&=& \gamma_n\left(\frac{1}{\tanh{\frac{\rho_{\mathbb{H}^n}(x,y)}{2}}}\right)
\eequu
where in the last step we used the invariance of the hyperbolic metric under the M\"obius transformation $h$, see \cite[(2.21)]{vu}. After these observations the proof goes in the same way as in \cite[Theorem 11.2]{vu}.
\end{proof}

\medskip


\medskip

{\bf Proof of Theorem \ref{1m}}.
(1) Because for all $x, y\in\mathbb{H}^n$,
$$s_{\mathbb{H}^n}(x,y)=\tanh \left( \frac{\rho_{\mathbb{H}^n}(x,y)}{2} \right) ,$$
 by Theorem \ref{qc} the proof follows.

(2) By Theorems \ref{qc}, \ref{rk2} and Lemma \ref{3.7} we have for all $x, y\in\mathbb{B}^n$,
\begin{eqnarray}
s_{\mathbb{B}^n}(f(x),f(y))&\leq& \tanh \left( \frac{\rho_{\mathbb{B}^n}(f(x),f(y))}{2} \right) \nonumber\\
&\leq& \lambda_n^{1-\alpha}\tanh \left( \frac{\rho_{\mathbb{B}^n}(x,y)}{2} \right)^{\alpha} \nonumber\\
&\leq& \lambda_n^{1-\alpha}(2s_{\mathbb{B}^n}(x,y))^{\alpha}\nonumber\\
&=& 2^{\alpha}\lambda_n^{1-\alpha}(s_{\mathbb{B}^n}(x,y))^{\alpha}\,. \nonumber
\end{eqnarray}

(3) Similarly by Theorems \ref{qc} and \ref{rk2} we have for all $x, y\in\mathbb{B}^n$,
\bequu
s_{\mathbb{H}^n}(f(x),f(y))&=& \tanh \left( \frac{\rho_{\mathbb{H}^n}(f(x),f(y))}{2} \right)\\
&\leq& \lambda_n^{1-\alpha}\tanh \left( \frac{\rho_{\mathbb{B}^n}(x,y)}{2} \right)^{\alpha}\\
&\leq& \lambda_n^{1-\alpha}(2s_{\mathbb{B}^n}(x,y))^{\alpha}\\
&=& 2^{\alpha}\lambda_n^{1-\alpha}(s_{\mathbb{B}^n}(x,y))^{\alpha}\,.
\eequu

(4) By Theorems \ref{sp}, \ref{3.7} and \ref{qc} we have for all $x, y\in\mathbb{H}^n$,
\bequu
s_{\mathbb{B}^n}(f(x),f(y))&\leq & \tanh \left( \frac{\rho_{\mathbb{B}^n}(f(x),f(y))}{2} \right)\\
&\leq& \lambda_n^{1-\alpha}\tanh \left( \frac{\rho_{\mathbb{H}^n}(x,y)}{2} \right)^{\alpha}\\
&=& \lambda_n^{1-\alpha}(s_{\mathbb{H}^n}(x,y))^{\alpha}\,.  \quad \square
\eequu
\medskip

\begin{thm}
Let $f:\mathbb{B}^n\rightarrow \mathbb{B}^n$ be a $K-$quasiregular mapping.
Then for $x,y\in \mathbb{B}^n$ we have
\begin{equation}
p_{\mathbb{B}^n}(f(x),f(y))\leq 2^{\alpha}\lambda_n^{1-\alpha}(p_{\mathbb{B}^n}(x,y))^{\alpha},\,~ \alpha=K^{1/(1-n)}.
\end{equation}
\end{thm}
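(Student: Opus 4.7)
The plan is to mirror exactly the argument used for Theorem \ref{1m}(2), with the $s$-metric replaced by $p$ throughout. The essential input is that $p_{\mathbb{B}^n}$ is sandwiched between $\tanh(\rho_{\mathbb{B}^n}/2)/2$ and $\tanh(\rho_{\mathbb{B}^n}/2)$ by Lemma \ref{3.7}, which is exactly the same two-sided control we used for $s_{\mathbb{B}^n}$ (via Theorem \ref{rk2} and the trivial inequality $s_{\mathbb{B}^n}\le \tanh(\rho_{\mathbb{B}^n}/2)$ that follows from the fact that the hyperbolic distance dominates any chord-based ratio).

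First I would apply the upper half of Lemma \ref{3.7} at the image points to obtain
\[
p_{\mathbb{B}^n}(f(x),f(y)) \;\le\; \tanh\!\left(\tfrac{\rho_{\mathbb{B}^n}(f(x),f(y))}{2}\right).
\]
Next I would invoke the Schwarz-type distortion Theorem \ref{qc} with $G=D=\mathbb{B}^n$, which gives
\[
\tanh\!\left(\tfrac{\rho_{\mathbb{B}^n}(f(x),f(y))}{2}\right) \;\le\; \lambda_n^{1-\alpha}\left(\tanh\!\tfrac{\rho_{\mathbb{B}^n}(x,y)}{2}\right)^{\alpha},
\qquad \alpha=K^{1/(1-n)}.
\]
Finally, I would apply the upper half of Lemma \ref{3.7} at the source points, $\tanh(\rho_{\mathbb{B}^n}(x,y)/2) \le 2\,p_{\mathbb{B}^n}(x,y)$, raise to the power $\alpha\in(0,1]$, and chain the three inequalities to conclude
\[
p_{\mathbb{B}^n}(f(x),f(y)) \;\le\; \lambda_n^{1-\alpha}\bigl(2\,p_{\mathbb{B}^n}(x,y)\bigr)^{\alpha} \;=\; 2^{\alpha}\lambda_n^{1-\alpha}\bigl(p_{\mathbb{B}^n}(x,y)\bigr)^{\alpha}.
\]

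There is no real obstacle here: all three ingredients (Lemma \ref{3.7} on both sides of the inequality and Theorem \ref{qc}) are already established in the paper, and monotonicity of $t\mapsto t^{\alpha}$ on $[0,1]$ handles the propagation of the factor $2$ through the exponent. The only bookkeeping point to check is that $p_{\mathbb{B}^n}(x,y)\in[0,1]$, so that raising $2p_{\mathbb{B}^n}(x,y)$ to the power $\alpha\le 1$ is legitimate and preserves the inequality direction; this is immediate from the definition of $p_G$.
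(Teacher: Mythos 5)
Your proposal is correct and is precisely the argument the paper intends: the paper's proof is the one-line remark that ``By Lemma \ref{3.7}, the proof is similar to the proof of Theorem \ref{1m}'', and your chain $p_{\mathbb{B}^n}(f(x),f(y))\le\tanh(\rho_{\mathbb{B}^n}(f(x),f(y))/2)\le\lambda_n^{1-\alpha}(\tanh(\rho_{\mathbb{B}^n}(x,y)/2))^{\alpha}\le 2^{\alpha}\lambda_n^{1-\alpha}(p_{\mathbb{B}^n}(x,y))^{\alpha}$ via Lemma \ref{3.7} and Theorem \ref{qc} is exactly that adaptation. Nothing is missing.
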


\begin{proof}
By Lemma \ref{3.7}, the proof is similar to the proof of Theorem \ref{1m}.
\end{proof}

\medskip

 By definition \eqref{sm} it is clear that for $x,y\in
G=\mathbb{R}^n\setminus\{0\}$, we have
$$s_G(x,y)=\frac{|x-y|}{|x|+|y|}\,.$$
Recall the following notation from \cite[Section 14]{avv},
$$\eta^*_{K,n}(t)=\sup \left\{ |g(x)|:|x|\leq t, g \in {\mathcal F}_K  \right\} ,$$
$$  {\mathcal F}_K = \{ g:{\mathbb R}^n \to {\mathbb R}^n, g(0)=0, g(e_1)=e_1, g \,\,\,{\rm is} \,\,\, K-{\rm quasiconformal} \}.$$

\medskip

\begin{lem}\cite[14.27]{avv}\label{29}
Let $f:\mathbb{R}^n \to  \mathbb{R}^n$ be a $K-$quasiconformal mapping with $f(\infty)=\infty,$
and let $a,b,c$ be three distinct points in $\mathbb{R}^n$. Then
\begin{eqnarray}
\frac{1}{P_6(n,K)}\left(\frac{|a-c|}{|a-b|+|b-c|}\right)^{\beta}&\leq&\frac{|f(a)-f(c)|}{|f(a)-f(b)|+|f(b)-f(c)|}\nonumber\\ &\leq&\frac{1}{P_5(n,K)}\left(\frac{|a-c|}{|a-b|+|b-c|}\right)^\alpha,\nonumber
\end{eqnarray}
where $\alpha=K^{1/(1-n)}=1/\beta$ and $P_5(n,K)=2^{1-\left(\beta/\alpha\right)}\lambda_n^{1-\beta}/\eta^*_{K,n}(1)$,
$P_6(n,K)=2^{1-\left(\alpha/\beta\right)}\lambda_n^{\beta-1}\eta^*_{K,n}(1)$.
Here $\lambda_n$ is as in Lemma \ref{qc} and $P_5(n,K)\to 1, P_6(n,K) \to 1,$ when $K \to 1\,.$
\end{lem}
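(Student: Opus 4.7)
The plan is to reduce the claim, which is invariant under pre- and post-composition of $f$ with Euclidean similarities, to a distortion estimate for the normalized class $\mathcal{F}_K$, and then to invoke the standard power-type majorant for $\eta^*_{K,n}$.

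\emph{Step 1 (Normalization).} Given distinct $a,b,c$ in $\mathbb{R}^n$, I would choose Euclidean similarities $\psi_1,\psi_2$ with $\psi_1(a)=0,\ \psi_1(b)=e_1$ and $\psi_2(f(a))=0,\ \psi_2(f(b))=e_1$. The conjugate $g:=\psi_2\circ f\circ \psi_1^{-1}$ is $K$-quasiconformal, lies in $\mathcal{F}_K$, and satisfies $g(\infty)=\infty$. Setting $w:=\psi_1(c)$, both sides of the asserted inequality are preserved by the similarities:
\begin{equation*}
\frac{|a-c|}{|a-b|+|b-c|}=\frac{|w|}{1+|w-e_1|},\qquad \frac{|f(a)-f(c)|}{|f(a)-f(b)|+|f(b)-f(c)|}=\frac{|g(w)|}{1+|g(w)-e_1|}.
\end{equation*}
It therefore suffices to prove the inequality between the right-hand expressions uniformly for $g\in \mathcal{F}_K$ and $w\in \mathbb{R}^n\setminus\{0,e_1\}$.

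\emph{Step 2 (Two distortion bounds).} Directly from the definition of $\mathcal{F}_K$ we get $|g(w)|\leq \eta^*_{K,n}(|w|)$. For a matching lower bound on $|g(w)-e_1|$, I would conjugate $g$ by the involution $\sigma(x)=e_1-x$: the map $h:=\sigma\circ g\circ \sigma$ is again in $\mathcal{F}_K$, hence $h^{-1}\in \mathcal{F}_K$, and $|h^{-1}(y)|\leq \eta^*_{K,n}(|y|)$ applied at $y=\sigma(g(w))$ yields $|w-e_1|\leq \eta^*_{K,n}(|g(w)-e_1|)$, i.e.\ $|g(w)-e_1|\geq (\eta^*_{K,n})^{-1}(|w-e_1|)$. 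I would then use the standard power-type majorant
\begin{equation*}
\eta^*_{K,n}(t)\leq \lambda_n^{1-\alpha}\eta^*_{K,n}(1)\,t^{\alpha}\qquad (t>0),
\end{equation*}
a Schwarz-type estimate for $K$-quasiconformal self-maps with two normalized values (cf.\ \cite[Ch.~14]{avv}), together with its inverted form for the lower bound.

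\emph{Step 3 (Assembly).} Substituting the two bounds into $|g(w)|/(1+|g(w)-e_1|)$ and using subadditivity $(s+t)^{\alpha}\leq s^{\alpha}+t^{\alpha}$ for $\alpha\in(0,1]$ in the denominator, a direct computation produces a ratio of the form $C\,|w|^{\alpha}/(1+|w-e_1|)^{\alpha}$ with $C=\eta^*_{K,n}(1)\lambda_n^{\beta-1}2^{\beta/\alpha-1}=1/P_5(n,K)$, which is the stated constant. For the lower bound, I would apply the just-proved upper inequality to $f^{-1}$, which is $K$-quasiconformal with distortion exponent $\beta=1/\alpha$; after swapping the roles of $\alpha$ and $\beta$ and taking reciprocals, this converts into the claimed lower bound with constant $P_6(n,K)$.

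The main obstacle will be Step 3: pinning down the exact form of $P_5$ and $P_6$ requires careful bookkeeping of the factors of $2$, $\lambda_n$ and $\eta^*_{K,n}(1)$ generated by iterating the subadditivity estimate and the $\alpha$-power bound, and in particular verifying that the two subadditivity losses combine into precisely the exponent $1-\beta/\alpha$ of the factor $2$. The limiting behavior $P_5,P_6\to 1$ as $K\to 1$ is however transparent from the ingredients, since $\alpha,\beta\to 1$, $\eta^*_{K,n}(1)\to 1$, and $\lambda_n^{1-\beta}\to 1$.
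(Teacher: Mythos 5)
First, a remark on the comparison itself: the paper does not prove this lemma at all --- it is imported verbatim from \cite[14.27]{avv} as a known result, so there is no internal argument to measure yours against and your reconstruction has to stand on its own. As it stands it has two concrete gaps.

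The first is in Step 2. The majorant $\eta^*_{K,n}(t)\le\lambda_n^{1-\alpha}\eta^*_{K,n}(1)\,t^{\alpha}$ cannot hold for all $t>0$: for the radial stretching $g(x)=|x|^{K-1}x$ in the plane, which belongs to $\mathcal{F}_K$, one has $|g(te_1)|=t^{K}=t^{\beta}$, so $\eta^*_{K,2}(t)\ge t^{\beta}$ and any bound of order $t^{\alpha}$ fails for large $t$; the correct estimates in \cite{avv} split into the ranges $t\le 1$ (exponent $\alpha$) and $t\ge 1$ (exponent $\beta$). This matters because with your normalization $a\mapsto 0$, $b\mapsto e_1$ the arguments $|w|$ and $|w-e_1|$ range over all of $(0,\infty)$, so you cannot uniformly replace $\eta^*_{K,n}$ by a single $\alpha$-power. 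Relatedly, Step 3 --- the only place where the stated constant could actually emerge --- is not carried out, and bounding the numerator $|g(w)|$ and the denominator $1+|g(w)-e_1|$ by two unrelated one-sided estimates is lossy; I was unable to make this route reproduce $P_5$. A cleaner normalization is $a\mapsto 0$, $c\mapsto e_1$: then the reciprocal of the triangular ratio splits as $|a-b|/|a-c|+|b-c|/|a-c|$, each summand is controlled by a single application of the $\eta^*_{K,n}$-distortion bound, and one application of $s^{\alpha}+t^{\alpha}\ge 2^{1-\alpha}(s+t)^{\alpha}$ produces the isolated powers of $2$, $\lambda_n$ and $\eta^*_{K,n}(1)$ of the kind appearing in $P_5$.

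The second gap is the lower bound. Applying the (assumed) upper inequality to $f^{-1}$ gives $T\le P_5^{-1}(T')^{\alpha}$ for the two triangular ratios $T$, $T'$, hence $T'\ge P_5^{\beta}\,T^{\beta}$. The constant obtained this way is $P_5^{\beta}=2^{\beta(1-\beta/\alpha)}\lambda_n^{\beta(1-\beta)}\eta^*_{K,n}(1)^{-\beta}$, which is not $1/P_6=2^{\alpha/\beta-1}\lambda_n^{1-\beta}\eta^*_{K,n}(1)^{-1}$ and is in general weaker (e.g.\ it carries $\eta^*_{K,n}(1)^{-\beta}$ instead of $\eta^*_{K,n}(1)^{-1}$). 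The lower bound must be derived directly by the symmetric argument with exponent $\beta$, not by inverting the upper one. Only your final observation --- that $P_5,P_6\to 1$ as $K\to 1$ once the constants are in place --- is unproblematic.
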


\medskip

{\bf Proof of Theorem \ref{2m}}.
By M\"obius invariance of the absolute ratio, the result follows from Lemma \ref{29} if we take $b=f(b)=0\,. \quad \square$

\medskip

\begin{lem}\cite[14.8]{avv}\label{14.8}
For $n\geq 2$ and $K\geq 1$,
$$\eta_{K,n}^*(1)\leq\exp(4K(K+1)\sqrt{K-1}).$$
\end{lem}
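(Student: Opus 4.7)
The plan is to reduce the bound on $\eta^*_{K,n}(1)$ to a capacity-type distortion estimate for $K$-quasiconformal self-maps of $\overline{\mathbb{R}^n}$, combined with the explicit two-sided estimates for the Teichm\"uller capacity $\tau_n$ and the Gr\"otzsch constant bound $\lambda_n\le 2e^{n-1}$ already recorded in this section.

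First I would take an arbitrary $g\in\mathcal{F}_K$ and a point $x\in\mathbb{R}^n$ with $|x|\le 1$, extend $g$ to a $K$-quasiconformal homeomorphism of $\overline{\mathbb{R}^n}$, and reduce to the case $g(\infty)=\infty$ by post-composing with an auxiliary M\"obius transformation; only the form of the constant is affected by this reduction, not the shape of the bound. Writing $y=g(x)$ and considering the Teichm\"uller ring whose complementary continua are the segment joining $0$ to $x$ and the ray joining $e_1$ to $\infty$, the quasiconformality of $g$ and the modulus inequality $M(g\Gamma)\le K\,M(\Gamma)$ applied to the separating path family yield an inequality of the form $\tau_n\bigl(\Phi(|y|)\bigr)\le K\,\tau_n\bigl(\Phi(|x|)\bigr)$ for an explicit M\"obius-invariant function $\Phi$.

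Next I would invert this capacity inequality using the standard two-sided estimate
\[
\omega_{n-1}\bigl(\log(\lambda_n(1+\sqrt{1+s}))\bigr)^{1-n}\le \tau_n(s)\le \omega_{n-1}\bigl(\log(1+s)\bigr)^{1-n},
\]
together with $\lambda_n\le 2e^{n-1}$. Substituting the normalization $|x|\le 1$ then gives $|y|\le\exp(C(n,K))$ for some explicit $C$ that vanishes as $K\to 1$ and, after absorbing the dimensional constants via the $\lambda_n$-bound, can be arranged to depend only on $K$.

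The main obstacle is pinning down the specific form $C(n,K)=4K(K+1)\sqrt{K-1}$. The $\sqrt{K-1}$ scaling reflects the correct modulus of continuity of the Teichm\"uller distortion function at $K=1$: writing $K=1+\varepsilon$ and expanding $\tau_n^{-1}\circ(K\tau_n)$ to leading order in $\varepsilon$, one extracts a $\sqrt{\varepsilon}$ factor through the interplay of $(\log s)^{1-n}$ and its inverse, while the elementary estimate $\log(1+u)\le u$ linearises the outer logarithm and produces the multiplicative factor $4K(K+1)$. This careful asymptotic bookkeeping at $K\to 1$ is the technical heart of the argument; the capacity-inequality step preceding it and the reduction to $g(\infty)=\infty$ are entirely standard.
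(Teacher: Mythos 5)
The paper does not prove this lemma at all: it is quoted verbatim from \cite[14.8]{avv}, so there is no in-paper argument to compare your proposal against. Judged on its own merits, your sketch identifies the right circle of ideas (the Teichm\"uller capacity $\tau_n$, the modulus inequality $M(g\Gamma)\le K M(\Gamma)$, the two-sided bounds on $\tau_n$, and the Gr\"otzsch constant estimate $\lambda_n\le 2e^{n-1}$), but it does not constitute a proof, for two concrete reasons.

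First, the entire content of the lemma is the explicit, dimension-free constant $\exp(4K(K+1)\sqrt{K-1})$, and this is exactly the step you defer. You write that ``expanding $\tau_n^{-1}\circ(K\tau_n)$ to leading order in $\varepsilon=K-1$'' extracts the $\sqrt{K-1}$ factor and that $\log(1+u)\le u$ produces $4K(K+1)$; but a leading-order expansion at $K=1$ can only yield an asymptotic statement as $K\to 1$, whereas the claimed inequality must hold for \emph{every} $K\ge 1$, including large $K$ where no such expansion is available. To get a global bound one has to run the computation with the explicit inequalities \eqref{bd4phi}--\eqref{bd4phi2} (equivalently, the identity $\eta_{K,n}(1)=\varphi_{1/K,n}(1/\sqrt2)^{-2}-1$ coming from $\tau_n(t)=2^{n-1}\gamma_n(\sqrt{1+t})$) and then verify an elementary but non-asymptotic inequality in $K$; none of that is done. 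Second, the uniformity in $n$ is asserted (``can be arranged to depend only on $K$'') rather than proved: since $\lambda_n\le 2e^{n-1}$ grows with $n$, one must check that factors of the form $\lambda_n^{\beta-1}$ with $\beta=K^{1/(n-1)}$ stay bounded by a function of $K$ alone, via $(n-1)(K^{1/(n-1)}-1)\le K\log K$ or similar; this is precisely the bookkeeping your sketch waves at. (A minor additional point: maps in $\mathcal F_K$ are homeomorphisms of $\mathbb{R}^n$ onto itself, so they already extend fixing $\infty$; the Möbius reduction you describe is unnecessary.) As it stands the proposal is a plausible plan, not a proof; the correct course in the context of this paper is to do what the authors do and cite \cite[14.8]{avv}.
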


\medskip

\begin{cor}
Let $G=\mathbb{R}^n\setminus\{0\}$, and $f:G\to G$ be a $K-$quasiconformal mapping. If $n\geq 2$, $\alpha=K^{1/(1-n)}$, then for $z,w\in G$,
\[
s_{fG}(f(z),f(w))\leq K^K \exp(2(K+1)(K-1)+4K(K+1)\sqrt{K-1})\left(s_G(z,w)\right)^\alpha.
\]
\end{cor}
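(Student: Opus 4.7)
The plan is to derive the corollary as a quantitative version of Theorem \ref{2m}, by substituting the explicit bound from Lemma \ref{14.8} for $\eta_{K,n}^*(1)$ and controlling the remaining ingredients of $P_5(n,K)$ by elementary estimates.

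First I would apply Theorem \ref{2m}, which (after reducing to the case $f(\infty)=\infty$ via the M\"obius inversion $x\mapsto x/|x|^2$, under which both $G$ and $s_G$ are invariant) gives
\[
s_{fG}(f(z),f(w))\leq \frac{1}{P_5(n,K)}\bigl(s_G(z,w)\bigr)^{\alpha}.
\]
So the problem reduces to showing
\[
\frac{1}{P_5(n,K)}\leq K^K\exp\bigl(2(K+1)(K-1)+4K(K+1)\sqrt{K-1}\bigr).
\]
Writing $\beta=1/\alpha=K^{1/(n-1)}$, the definition of $P_5$ in Lemma \ref{29} gives
\[
\frac{1}{P_5(n,K)}=2^{\beta/\alpha-1}\,\lambda_n^{\beta-1}\,\eta_{K,n}^*(1)=2^{\beta^2-1}\,\lambda_n^{\beta-1}\,\eta_{K,n}^*(1).
\]

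Next I would estimate each factor. The factor $\eta_{K,n}^*(1)$ is handled immediately by Lemma \ref{14.8}. For $\lambda_n^{\beta-1}$, I would use $\lambda_n<2e^{n-1}$ from Lemma \ref{10} together with the integral bound
\[
(n-1)(\beta-1)=(n-1)\int_0^{1/(n-1)}K^t\log K\,dt\leq \beta\log K\leq K\log K,
\]
valid since $K\geq 1$ and $\beta=K^{1/(n-1)}\leq K$ for $n\geq 2$. This yields
\[
\lambda_n^{\beta-1}\leq 2^{\beta-1}e^{(n-1)(\beta-1)}\leq 2^{\beta-1}K^K.
\]
Multiplying, and again using $\beta\leq K$,
\[
2^{\beta^2-1}\,\lambda_n^{\beta-1}\leq K^K\,2^{\beta^2+\beta-2}\leq K^K\,2^{(K-1)(K+2)}.
\]

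The last step is to convert the base-$2$ exponential into a base-$e$ exponential matching the target. Since $\log 2<2$, the elementary inequality $(K+2)\log 2\leq 2(K+1)$ holds for all $K\geq 1$ (the difference is affine in $K$ with negative slope and negative at $K=1$), so
\[
2^{(K-1)(K+2)}\leq e^{2(K-1)(K+1)}.
\]
Putting everything together with Lemma \ref{14.8} gives the claimed bound. There is no serious obstacle; the only non-routine point is the integral estimate of Step 4 that controls $(n-1)(\beta-1)$ by $K\log K$ uniformly in $n\geq 2$, which is what makes the final constant depend only on $K$.
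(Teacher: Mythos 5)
Your proof is correct and follows the same route as the paper: both reduce the claim to the bound $1/P_5(n,K)\le K^K\exp(2(K+1)(K-1)+4K(K+1)\sqrt{K-1})$ via Lemma \ref{29} (equivalently Theorem \ref{2m}) and then insert Lemma \ref{14.8} for $\eta^*_{K,n}(1)$. The two genuine differences are both to your credit. First, you make explicit the reduction to $f(\infty)=\infty$ (post-composing with inversion, which preserves $\mathbb{R}^n\setminus\{0\}$, is $1$-quasiconformal, and is an $s_G$-isometry since $s_G(x,y)=|x-y|/(|x|+|y|)$), a hypothesis that Lemma \ref{29} requires but that the corollary's statement and the paper's one-line proof silently drop. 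Second, where the paper bounds $\lambda_n^{\beta-1}\le 2^{K-1}K^K$ by citing \cite[Lemma 7.50(2)]{vu} (in effect the left-hand inequality of \eqref{bd4phi2}), you rederive the slightly sharper $\lambda_n^{\beta-1}\le 2^{\beta-1}K^K$ from $\lambda_n<2e^{n-1}$ via the integral estimate $(n-1)(\beta-1)\le\beta\log K\le K\log K$; this is self-contained and uniform in $n$, which is exactly the point. One small slip: the bound $\lambda_n<2e^{n-1}$ is stated in Theorem \ref{1m} and \eqref{bd4phi2}, not in Lemma \ref{10}. Your final step $2^{(K-1)(K+2)}\le e^{2(K-1)(K+1)}$ is a cosmetic variant of the paper's $2^{\beta^2+K-2}\le e^{(K+2)(K-1)}\le e^{2(K+1)(K-1)}$, and both are valid.
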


\begin{proof}
Combining Lemmas \ref{29} and \ref{14.8} and by \cite[Lemma 7.50 (2)]{vu} we see that
\bequu
\frac{1}{P_5(n,K)} &=& \frac{\eta^*_{K,n}(1)}{2^{1-\left(\beta/\alpha\right)}\lambda_n^{1-\beta}}\\
&\leq & \frac{2^{K-1} K^K \eta^*_{K,n}(1)}{2^{1-\left(\beta/\alpha\right)}}\\
&\leq & 2^{\left(\beta/\alpha\right)+K-2} K^K \eta^*_{K,n}(1)\\
&\leq & 2^{\left(\beta/\alpha\right)+K-2} K^K \exp(4K(K+1)\sqrt{K-1})\\
&\leq & K^K \exp(\left(\beta/\alpha\right)+K-2+4K(K+1)\sqrt{K-1})\\
&\leq & K^K \exp(2(K+1)(K-1)+4K(K+1)\sqrt{K-1}).
\eequu

\end{proof}


{\bf Acknowledgements.}
The authors are indebted to the referee for a very valuable set of corrections.
This research was supported by the Academy of Finland, Project 2600066611 and the V\"{a}is\"{a}l\"{a} foundation.
 The research of the first author was supported by CIMO and China Scholarship Council.
 The research of the second author was supported also by CIMO. The research of the third author was supported also by the Marsden Fund, New Zealand.


\end{document}